\newcolumntype{L}[1]{>{\raggedright\let\newline\\\arraybackslash\hspace{0pt}}m{#1}}
\newcolumntype{C}[1]{>{\centering\let\newline\\\arraybackslash\hspace{0pt}}m{#1}}
\newcolumntype{R}[1]{>{\raggedleft\let\newline\\\arraybackslash\hspace{0pt}}m{#1}}
\theoremstyle{plain}
\newtheorem{theo}{Theorem}[section]
\newtheorem{lem}{Lemma}[section]
\newtheorem{prop}{Proposition}[section]
\newtheorem{cor}{Corollary}[section]
\theoremstyle{definition} 
\newtheorem{nota}{Notation}[section]
\newtheorem{de}{Definition}[section]
\newtheorem{exa}{Example}[section]
\newtheorem{as}{Assumption}[section]
\newtheorem{alg}{Algorithm}[section]
\newcommand{\btheo}{\begin{theo}}
\newcommand{\bde}{\begin{de}}
\newcommand{\ble}{\begin{lem}}
\newcommand{\bpr}{\begin{prop}}
\newcommand{\bno}{\begin{nota}}
\newcommand{\bex}{\begin{exa}}
\newcommand{\bcor}{\begin{cor}}
\newcommand{\spro}{\begin{proof}}
\newcommand{\bas}{\begin{as}}
\newcommand{\balg}{\begin{alg}}
\newcommand{\etheo}{\end{theo}}
\newcommand{\ede}{\end{de}}
\newcommand{\ele}{\end{lem}}
\newcommand{\epr}{\end{prop}}
\newcommand{\eno}{\end{nota}}
\newcommand{\eex}{\end{exa}}
\newcommand{\ecor}{\end{cor}}
\newcommand{\fpro}{\end{proof}}
\newcommand{\eas}{\end{as}}
\newcommand{\ealg}{\end{alg}}
\theoremstyle{plain}
\newtheorem{theos}{Theorem}
\newtheorem{props}{Proposition}
\newtheorem{lems}{Lemma}
\newtheorem{cors}{Corollary}
\theoremstyle{definition}
\newtheorem{exas}{Example}
\newtheorem{algs}{Algorithm}
\newtheorem{asss}{Assumption}
\newtheorem{defns}{Definition}
\newcommand{\btheos}{\begin{theos}}
\newcommand{\etheos}{\end{theos}}
\newcommand{\bprops}{\begin{props}}
\newcommand{\eprops}{\end{props}}
\newcommand{\bdes}{\begin{defns}}
\newcommand{\edes}{\end{defns}}
\newcommand{\blems}{\begin{lems}}
\newcommand{\elems}{\end{lems}}
\newcommand{\bcors}{\begin{cors}}
\newcommand{\ecors}{\end{cors}}
\newcommand{\bexs}{\begin{exas}}
\newcommand{\eexs}{\end{exas}}
\newcommand{\balgs}{\begin{algs}}
\newcommand{\ealgs}{\end{algs}}
\newcommand{\bass}{\begin{asss}}
\newcommand{\eass}{\end{asss}}
\newlength{\widebarargwidth}
\newlength{\widebarargheight}
\newlength{\widebarargdepth}
\DeclareRobustCommand{\widebar}[1]{%
  \settowidth{\widebarargwidth}{\ensuremath{#1}}%
  \settoheight{\widebarargheight}{\ensuremath{#1}}%
  \settodepth{\widebarargdepth}{\ensuremath{#1}}%
  \addtolength{\widebarargwidth}{-0.3\widebarargheight}%
  \addtolength{\widebarargwidth}{-0.3\widebarargdepth}%
  \makebox[0pt][l]{\hspace{0.3\widebarargheight}%
    \hspace{0.3\widebarargdepth}%
    \addtolength{\widebarargheight}{0.3ex}%
    \rule[\widebarargheight]{0.95\widebarargwidth}{0.1ex}}%
  {#1}}
\newcommand{\Prob}{\ensuremath{\mathbb{P}}}
\newcommand{\Exs}{\ensuremath{\mathbb{E}}}
\newcommand{\MODEL}{\ensuremath{\mathbb{M}}}
\newcommand{\ubar}[1]{\underaccent{\bar}{#1}}
\newcommand{\uepsilon}{\ensuremath{\varepsilon}}
\newcommand{\lepsilon}{\ensuremath{\ubar{\varepsilon}}}
\newcommand{\thetastar}{\ensuremath{{\theta^\ast}}}
\newcommand{\lambdastar}{\ensuremath{{\lambda^\ast}}}
\long\def\@makecaption#1#2{
        \vskip 0.8ex
        \setbox\@tempboxa\hbox{\small {\bf #1:} #2}
        \parindent 1.5em  %% How can we use the global value of this???
        \dimen0=\hsize
        \advance\dimen0 by -3em
        \ifdim \wd\@tempboxa >\dimen0
                \hbox to \hsize{
                        \parindent 0em
                        \hfil 
                        \parbox{\dimen0}{\def\baselinestretch{0.96}\small
                                {\bf #1.} #2
                                %%\unhbox\@tempboxa
                                } 
                        \hfil}
        \else \hbox to \hsize{\hfil \box\@tempboxa \hfil}
        \fi
        }
\newcommand{\abs}[1]{\left\vert#1\right\vert}
\newcommand{\norm}[1]{\left\Vert#1\right\Vert}
\newcommand \bbP{\mathbb{P}}
\newcommand \bbE{\mathbb{E}}
\newcommand \bbR{\mathbb{R}}
\newcommand \bbL{\mathbb{L}}
\newcommand \bbH{\mathbb{H}}
\newcommand{\m}[1]{\mathcal{#1}}
\newcommand{\mb}[1]{\mathbb{#1}}
\begin{document}

\title{\bf{Bayesian model selection consistency and oracle inequality with intractable marginal likelihood}}

\author[1]{Yun Yang\thanks{Corresponding Author: yyang@stat.fsu.edu}}
\author[1]{Debdeep Pati\thanks{debdeep@stat.fsu.edu}}
\affil[1]{Department of Statistics, Florida State University}

\maketitle

\begin{abstract}
In this article, we investigate large sample properties of model selection procedures in a general Bayesian framework when a closed form expression of the marginal likelihood function is not available or a local asymptotic quadratic approximation of the log-likelihood function does not exist. Under appropriate identifiability assumptions on the true model, we provide sufficient conditions for a Bayesian model selection procedure to be consistent and obey the Occam's razor phenomenon, i.e., the probability of selecting the ``smallest" model that contains the truth tends to one as the sample size goes to infinity.  In order to show that a Bayesian model selection procedure selects the smallest model containing the truth, we impose a prior anti-concentration condition, requiring the prior mass assigned by large models to a neighborhood of the truth to be sufficiently small. In a more general setting where the strong model identifiability assumption may not hold, we introduce the notion of local Bayesian complexity and develop oracle inequalities for Bayesian model selection procedures. Our Bayesian oracle inequality characterizes a trade-off between the approximation error and a Bayesian characterization of the local complexity of the model, illustrating the adaptive nature of averaging-based Bayesian procedures towards achieving an optimal rate of posterior convergence. Specific applications of the model selection theory are discussed in the context of  high-dimensional nonparametric regression and density regression where the regression function or the conditional density is assumed to depend on a fixed subset of predictors.  As a result of independent interest,  we propose a general technique for obtaining upper bounds of certain small ball probability of stationary Gaussian processes. 
\end{abstract}

%%%%%%%%%%%%%%%%%%%%%%%%%%%%%%%%%%%%%%%%%%%%%%%%%%%%%%%%%%%%%%%%%%%%%%%

\section{Introduction}
A Bayesian framework offers a flexible and natural way to conduct model selection by placing prior weights over different models and using the posterior distribution to select a best one. However, unlike penalization-based model selection methods, there is a lack of general theory understanding large sample properties of Bayesian model selection procedures from a frequentist perspective. As a motivating example, we consider the problem of selecting a model from a  sequence of nested models. Under this special example, the Occam's Razor \cite{berger1996intrinsic} phenomenon suggests that a good model selection procedure is expected to select the smallest model that contains the truth. This example motivates us to investigate the consistency of a Bayesian model selection procedure, that is, whether the posterior tends to concentrate all its mass on the smallest model space that contains the true data generating model. 

In the frequentist literature, most model selection methods are based on optimization, where penalty terms are incorporated to penalize models with higher complexity. A large volume of the literature focuses on excess risk bounds and oracle inequalities, which are characterized via either some global measures of model complexity \cite{Vapnik1998,wegkamp2003,Bartlett:2003,Bousquet:2002,koltchinskii2006} that typically yield a suboptimal ``slow rate", or some improved local measures of the complexity \cite{koltchinskii2006,bartlett2005,Lever:2013} that yield an optimal ``fast rate" \cite{Bar:2007,Oneto2016}. An overwhelming amount of recent literature on penalization methods for high-dimensional statistical problems can also be analyzed under the model selection perspective. For example, in high dimensional linear regression, the famous Lasso~\cite{Tibshirani1996} places an $\ell_1$ penalty to induce sparsity, which can be considered as selecting a model from the model space consisting a sequence of $\ell_1$ balls with increasing radius; in sparse additive regression, by viewing the model space as all additive function spaces involving different subset of covariates with each univariate function lying in a univariate Reproducing kernel Hilbert space (RKHS) with increasing radius. \cite{Raskutti2012} proposed a minimax-optimal penalized method with a penalty term proportional to the sum of empirical norms and RKHS norms of univariate functions.

In the classical literature of Bayesian model selection in low dimensional parametric models, most results on model selection consistency relies on the critical property that the log-likelihood function can be locally approximated by a quadratic form of the parameter (such as the local asymptotic normality property) under a set of regularity assumption in the asymptotic regime when the sample size $n$ tends to infinity.
There is a growing body of literature which has provided some theoretical understanding of Bayesian variable selection for linear regression with a growing number of covariates, which is a special case of the model selection. In the moderate-dimension scenario (the number $p$ of covariates is allowed to grow with the sample size, but $p\leq n$),
\cite{Shang2011} established variable selection consistency in a Bayesian linear model, meaning that the posterior probability of the true model that contains all influential covariates tends to one as $n$ grows to infinity.   
\cite{johnson2012nonlocal} showed a selection inconsistency phenomenon for using several commonly used mixture priors, including local mixture (point mass at zero and slab prior
with non-zero value at null-value 0 of the slab density) priors, when $p$ is larger than the order of $\sqrt{n}$. To address this, they advocated the use of a non-local mixture priors (slab density has value $0$ at null value $0$) and obtained selection consistency when the dimension $p$ is $O(n)$. \cite{castillo2015bayesian} provided several conditions on the design matrix and the minimum signal strength  to ensure selection consistency with local priors  when $p \gg n$.  \cite{Narisetty2014} considered selection consistency using  a spike and slab local prior in  a high-dimensional scenario where $p$ can grow nearly exponentially with $n$. \cite{Yang2015} showed  variable selection consistency of high-dimensional Bayesian linear regression, where a prior is directly placed over the model space that penalizes each covariate in the model by a factor of $p^{-O(1)}$; in this setting, they showed a particular Markov chain Monte Carlo algorithm for sampling from the model space is rapidly mixing, meaning that the number of iterations required for the chain to converge to an $\varepsilon$-distance of stationary distribution from any initial configuration is at most polynomial in $(n, p)$. Although the aforementioned results on variable selection consistency in Bayesian linear models are promising, their proofs are all based on analyzing  closed form expressions of the marginal likelihood function, that is, the  likelihood function integrated with respect to the conditional prior distribution of the parameters given the model. 

The assumption of either an existence of a closed form expression of the marginal likelihood function or the existence of a local asymptotic quadratic approximation of the log-likelihood function significantly impedes the applicability of the current proof techniques to general model selection problems. For example, this assumption precludes the case when the parameter space is infinite dimensional, such as a space of functions or conditional densities indexed by predictors. To the best of our knowledge, little is known about the model selection consistency of an infinite dimensional Bayesian model or  in general when the marginal likelihood is intractable. The only relevant work in this direction is \cite{ghosal2008}, where they considered Bayesian nonparametric density estimation under an unknown regularity parameter, such as the smoothness level, which serves as the model index. In this setting, they showed that the posterior distribution tends to give negligible weight to models that are bigger than the optimal one, and thus selects the optimal model or smaller models that also approximate the true density well.

The goal of the current paper is to build a general theory for studying large sample properties of Bayesian model selection procedures, for example, model selection consistency and oracle inequalities. We show that in the Bayesian paradigm, the local Bayesian complexity, defined as $n^{-1}$ times the negative logarithm of the prior probability mass assigned to certain Kullback-Leibler divergence ball around the true model, plays the same role as the local complexity measures in oracle inequalities of penalized model selection methods. For example, when the conditional prior within each model is close to a ``uniform" distribution over the parameter space, the local Bayesian complexity becomes similar to a local covering entropy, recovering classical results \cite{lecam1973} by Le Cam for charactering rate of convergence using local entropy conditions.
In the special case of parametric models, when the prior is thick at the truth (which is a common assumption made in Bernstein-von Mises type results, refer to \cite{vdV00}), the local Bayesian complexity is $O(p\,\log n)$, which scales linearly in the dimension $p$ of the parameter space, recovering classical asymptotic theory on Bayesian model selection using the Bayesian information criterion (BIC).
%Using the notion of local Bayesian complexity, 
In this article, we build oracle inequalities for Bayesian model selection procedures using the notion of local Bayesian complexity. Our oracle inequality implies that by properly distributing prior mass over different models, the resulting posterior distribution adaptively allocates its mass to models with the optimal rate of posterior convergence, revealing the adaptive nature of averaging-based Bayesian procedures.

Under an appropriate identifiability assumption on the true model, we show that a Bayesian model selection procedure is consistent, that is, the probability of selecting the ``smallest" model that contains the truth tends to one as $n$ goes to infinity. Here, the size of a model is determined by its local Bayesian complexity. In concrete examples, in order to show that a Bayesian model selection procedure tends to select the model that contains the truth and is smallest in the physical sense (for example, in variable selection case, the smallest model is the one exactly contains all influential covariates), we impose a prior anti-concentration condition, requiring the prior mass assigned by large models to a neighborhood of the truth to be sufficiently small. As a result of independent interest, in our proof for variable selection consistency of Bayesian high dimensional nonparametric regression using Gaussian process (GP) priors, we propose a general technique for obtaining upper bounds of certain small ball probability of stationary GP. The results complement the lower bound results obtained in \cite{kuelbs1993metric,li2001gaussian,vandervaart2009}.

Our results reveal that in the framework of Bayesian model selection, averaging based estimation procedure can gain advantage over optimization based procedures \cite{bhattacharya2016bayesian} in that i) the derivation on the convergence rate is simpler as the expectation exchanges with integration;  elementary probability inequalities such as Chebyshev's inequality and Markov's inequality can be used as opposed to more sophisticated empirical process tools for analysing optimization-based estimators; ii) the averaging based approach offers a more flexible framework to incorporate additional information and achieve adaptation to unknown hyper or tuning parameters, due to its average case analysis nature, which is different from the worse case analysis  of the optimization based approach. Overall, our results indicate that a Bayesian model selection approach naturally penalizes larger models since the prior distribution becomes more dispersive and the prior mass concentrating around the true model diminishes, manifesting Occam's Razor phenomenon. This renders a Bayesian approach to be naturally rate-adaptive to the best model by optimally trading off between goodness-of-fit and model complexity. 

The remainder of this paper is organized as follows.  In \S  \ref{sec:notn}, we introduce notations to be used in the subsequent sections.  In \S \ref{sec:prob},  we introduce the background and formulate the model selection problem.   The assumptions required for optimum posterior contraction rate are discussed in \S \ref{Sec:ContractionRate} with corresponding PAC-Bayes bounds in \S \ref{Sec:PACBayes}.  The main results are stated in \S \ref{sec:main} with the Bayesian model selection consistency theorems in \S \ref{Sec:BMSC} and Bayesian oracle inequalities in \S \ref{Sec:BOI}.  In \S \ref{Sec:NonReg} and \S \ref{Sec:DenReg}, we discuss applications of the model selection theory in the context of high-dimensional nonparametric regression and density regression where the regression function or the conditional density is assumed to depend on a fixed subset of predictors.

\subsection{Notations}\label{sec:notn}
Let $h(p,q) =(\int(p^{1/2}- q^{1/2})^2d\mu)^{1/2}$ and $D(p,q) = \int p\log$ $(p/q)d\mu$ stand for the Hellinger distance and Kullback-Leibler divergence, respectively, between two probability density functions $p$ and $q$ relative to a 
a common dominating measure $\mu$. We define an additional discrepancy measure $V(f,g) = \int f| \log(f/g) - D(f,g)|^2d\mu$. 
 For any $\alpha \in (0, 1)$, let
\begin{align}\label{eq:renyi_def}
D^{(n)}_\alpha(p, q) = \frac{1}{\alpha-1} \log \int p^{\alpha} q^{1 - \alpha} d\mu 
\end{align}
denote the R{\'e}nyi divergence of order $\alpha$. Let us also denote by $A_{\alpha}(p, q)$  the quantity $\int p^{\alpha} q^{1 - \alpha} d\mu = e^{(\alpha-1) D^{(n)}_\alpha(p, q)}$, which we shall refer to as the $\alpha$-affinity. When $\alpha = 1/2$, the $\alpha$-affinity equals the Hellinger affinity. Moreover, $0 \le A_{\alpha}(p, q) \le 1$ for any $\alpha \in (0, 1)$, implying that $D^{(n)}_\alpha(p, q) \ge 0$ for any $\alpha \in (0, 1)$ and equality holds if and only if $p\equiv q$.  Relevant  inequalities  and properties related to 
R{\'e}nyi divergence can be found in \cite{van2014renyi}. Let $N(\varepsilon,\, \mathcal{F},\,  d)$ denote the $\varepsilon$-covering number of the space $\mathcal{F}$ with respect to a semimetric $d$.  Operator ``$\lesssim$'' denotes less or equal up to a multiplicative positive constant relation.   For a finite set $A$, let $|A|$ denote the cardinality of $A$.
The set of natural numbers is denoted by  
$\mathbb{N}$. The $m$-dimensional simplex is denoted by $\Delta^{m-1}$.  $I_k$ stands for the $k \times k$ identity matrix.
Let $\phi_{\mu,\sigma}$ denote a multivariate normal density with mean $\mu \in \mathbb{R}^k$ and covariance 
matrix $\sigma^2 I_k$.
% (or a diagonal matrix with squared elements of $\sigma$ on the diagonal,
%when $\sigma$ is a $k$-vector).

\section{Background and problem formulation}\label{sec:prob}
Let $(X^{(n)},\m A^{(n)}, \mb P^{(n)}_\theta:\,  \theta \in \Theta)$ be a sequence of statistical experiments with observations $X^{(n)}=(X_1,X_2,\ldots,X_n)$, where $\theta$ is the parameter of interest living in an arbitrary parameter space $\Theta$, and $n$ is the sample size. Our framework allows the observations to deviate from identically or independently distributed setting (abbreviated as non-i.i.d.) \cite{ghosal2007}. For example, this framework covers Gaussian regression with fixed design, where observations are independent, but nonidentically distributed (i.n.i.d.). For each $\theta$, let $\bbP^{(n)}_\theta$ admit a density $p_{\theta}^{(n)}$ relative to a $\sigma$-finite measure $\mu^{(n)}$. Assume that $(x,\theta)\to p_{\theta}^{(n)}(x)$ is jointly measurable relative to $\mathcal{A}^{(n)} \otimes\mathcal{B}$, where $\mathcal{B}$ is a $\sigma$-field on $\Theta$. 

For a model selection problem, let $\MODEL=\{\MODEL_\lambda, \, \lambda\in \Lambda\}$ be the model space consisting of a countable number of models of interest. Here, $\Lambda$ is a countable index set, $\MODEL_\lambda=\{\Prob_{\theta}: \, \theta\in\Theta_\lambda\}$ is the model indexed by $\lambda$, and $\Theta_\lambda$ is the associated parameter space. Assume that the union of all $\Theta_\lambda$'s constitutes the entire parameter space $\Theta$, that is $\Theta = \bigcup_{\lambda\in \Lambda} \Theta_\lambda$. 
We consider the model selection problem in its full generality by allowing $\{\Theta_\lambda, \,\lambda\in \Lambda\}$ to be arbitrary. For example, they can be Euclidean spaces with different dimensions or spaces of functions depending on different subsets of covariates. Moreover, $\Theta_\lambda$'s may overlap or have inclusion relationships. 

We use the notation $\thetastar$ to denote the true parameter, also referred to as the truth, corresponding to the data generating model $\mb P^{(n)}_\thetastar$. Let $\lambdastar$ denote the index corresponding to the smallest model $\MODEL_\lambda$ that contains $\Prob^{(n)}_{\thetastar}$. 
More formally, a smallest model is defined as 
\begin{align}\label{Eqn:Model_assump}
\Theta_\lambdastar = \bigcap_{\lambda:\, \thetastar\in \Theta_\lambda} \Theta_\lambda.
\end{align}
Here, we have made an implicit assumption that for any $\theta^\ast \in\Theta$, there always exists a unique model $\MODEL_{\lambda^\ast}$ such that the preceding display is true.
This assumption rules out pathological cases where $\m P^{(n)}_\thetastar$ may belong to several incomparable models and a smallest model cannot be defined.

Let $\pi_\lambda$ be the prior weight assigned to model $\MODEL_\lambda$ and $\Pi_\lambda(\cdot)$ be the prior distribution over $\Theta_\lambda$ in $\MODEL_\lambda$, that is, $\Pi_\lambda(\cdot)$ is the conditional prior distribution of $\theta$ given model $\MODEL_\lambda$ being selected. 
Under this joint prior distribution $\Pi = \{(\pi_\lambda,\Pi_\lambda):\,\lambda\in\Lambda\}$ on $(\lambda, \theta)$, we obtain a joint posterior distribution using Bayes Theorem, 
\begin{align}\label{Eqn:normalpost}
\Pi( \theta \in B,\, \lambda \, |\, X^{(n)}) = \frac{ \pi_\lambda\, \int_{B} p^{(n)}_{\theta} (X^{(n)}) \, \Pi_\lambda(d\theta) }{\sum_{\lambda\in\Lambda} \pi_\lambda \,  \int_{\Theta_\lambda} p^{(n)}_{\theta} (X^{(n)})  \, \Pi_\lambda(d\theta)},\quad \forall B \in \m B.
\end{align}
By integrating this posterior distribution over $\lambda$ or $\theta$, we obtain respectively the marginal posterior distribution of the model index $\lambda$ or the parameter $\theta$.
In this paper, we also consider a class of quasi-posterior distributions obtained by using the $\alpha$-fractional likelihood 
\cite{walker2001bayesian,martin2016optimal,bhattacharya2016bayesian}, which is the usual likelihood raised to power $\alpha\in(0,1)$,
\begin{align*}
L_{n, \alpha}(\theta) = \Big[ p^{(n)}_\theta (X^{(n)}) \Big]^{\alpha}.
\end{align*}
Let $\Pi_{n, \alpha}(\cdot)$ denote the qausi-posterior distribution, also referred to as the $\alpha$-fractional posterior distribution, obtained by combining the fractional likelihood $L_{n, \alpha}$ with the prior $\Pi$,
\begin{align}\label{Eqn:quasilpost}
\Pi_{\alpha}( \theta \in B,\, \lambda\, |\, X^{(n)}) = \frac{ \pi_\lambda\, \int_{B}\big(p^{(n)}_{\theta} (X^{(n)})\big)^\alpha \, \Pi_\lambda(d\theta) }{\sum_{\lambda\in\Lambda} \pi_\lambda \,  \int_{\Theta_\lambda} \big(p^{(n)}_{\theta} (X^{(n)}) \big)^\alpha \, \Pi_\lambda(d\theta)},\quad \forall B \in \m B.
\end{align}
The posterior distribution in~\eqref{Eqn:normalpost} is a special case of the fractional posterior with $\alpha=1$. For this reason, we also refer the posterior  in~\eqref{Eqn:normalpost}  as the regular posterior distribution.
As described in \cite{bhattacharya2016bayesian} (refer to Section~\ref{Sec:ContractionRate} of the current article for a brief review), the development of  asymptotic theory for fractional posterior distributions demands much simpler conditions than the regular posterior while maintaining the same rate of convergence. However, the  downside is two-fold  i) the credible intervals from the fractional posterior distribution maybe $\alpha^{-1}$ times wider than those from the normal posterior, at least for the regular parametric models where the Bernstein-von Mises theorem holds; ii) the simplified asymptotic results only apply for a certain class of distance measures $d_n$. The first downside can be remedied by post-processing the credible intervals, for example, reduce its width from the center by a factor of $\alpha$; and the second by considering a general class of risk function-induced fractional quasi-posteriors. We leave the latter as a topic of future research.  

\paragraph{Definition:} We say that a Bayesian procedure has  model selection consistency if
\begin{align*}
 \Exs_{\thetastar}^{(n)}[ \Pi_{\alpha}(\lambda = \lambda^\ast \, |\, X_1,\ldots,X_n) ] \rightarrow 1,\quad \mbox{as $n\to\infty$,}
\end{align*}
where either $\alpha=1$ or $\alpha\in(0,1)$, depending on whether regular or fractional posterior distribution is used.

Our general framework allows the truth $\thetastar$ to belong to multiple, even infinitely many $\Theta_\lambda$'s, for example, when $\MODEL$ is a sequence of nested models $\MODEL_1\subset\MODEL_2\subset\cdots$. In such a situation, the Occam's Razor principle suggests that a good statistical model selection procedure should be able to select the most parsimonious model $\MODEL_\lambda$ that fits the data well. This criterion is consistent with our definition of Bayesian model selection consistency, which requires the marginal posterior distribution over the model space to concentrate on the smallest model $\MODEL_\lambdastar$ that contains $\mb P^{(n)}_\thetastar$.
If a Bayesian procedure results in model selection consistency, then we can define a single selected model $\MODEL_{\widehat{\lambda}_{\alpha}}$, with its model index being selected as
\begin{align*}
\widehat{\lambda}_{\alpha}:\, = \arg\max_\lambda \Pi_\alpha(\lambda = \lambda^\ast \, |\, X_1,\ldots,X_n) ],
\end{align*}
which is the posterior mode over the index set. This model selection procedure satisfies the model selection consistency criterion under the frequentist perspective, i.e., 
\begin{align*}
\Prob_{\theta^\ast}^{(n)}\big[\widehat{\lambda}_{\alpha} = \lambda^\ast\big] \rightarrow 1,\quad \mbox{as $n\to\infty$.}
\end{align*}

\subsection{Contraction of posterior distributions}\label{Sec:ContractionRate}
In this subsection, we review the theory \cite{ghosal2000,ghosal2007,bhattacharya2016bayesian}  on the contraction rate of regular and fractional posterior distributions. 
%These results can be used to characterize the asymptotic shape of the conditional (fractional) posterior distribution of $\theta$ given certain model is selected. 
For notational simplicity, we drop the dependence on the model index $\lambda$ in the current (\S \ref{Sec:ContractionRate}) and the next (\S \ref{Sec:PACBayes}) subsections, and the results can be applied to any $\MODEL_\lambda$ with $\lambda\in\Lambda$. 
Recall that the observations $X^{(n)}=(X_1,\ldots,X_n)$ are realizations from the data generating model $\Prob_{\theta^\ast}$, where the true parameter $\theta^\ast$ may or may not belong to the parameter space $\Theta$. When $\theta^\ast$ does not belong to $\Theta$, the model is considered to be misspecified. 

%\cite{kleijn2006} show that the posterior distribution of $\theta$ in a misspecified model tends to contract towards $\theta_\lambda$, the parameter in $\Theta_\lambda$ whose associated model $\Prob_{\theta}$ is closest to $\Prob_{\theta^\ast}$ in terms of the Kullback-Leibler divergence. In addition, the rate of contraction depends on prior and some complexity measure of $\Theta_\lambda$. It is worth noting the theory on well-specified model still applies to the misspecified case to show a weaker result that the posterior distribution tends to put all its mass into a ball centering at $\theta^\ast$ with certain radius (not necessarily decays to zero as $n$ increases). However, since this weaker result suffices for our purpose of proving model selection consistency, we will adopt the theory developed in  \cite{ghosal2000,ghosal2007} to avoid introducing some technical definitions and notation in \cite{kleijn2006}.

\paragraph{Regular posterior distribution:} We consider the regular posterior distribution 
\begin{align}\label{Eqn:normalpost_review}
\Pi( \theta \in B\, |\, X^{(n)}) = \frac{ \int_{B}  p^{(n)}_{\theta} (X^{(n)}) \, \Pi(d\theta) }{ \int_{\Theta} p^{(n)}_{\theta} (X^{(n)})  \, \Pi(d\theta)},\quad \forall B \in \m B.
\end{align}
We introduce three common assumptions that are adopted in the literature \cite{ghosal2000,ghosal2007}. Let $d_n$ be a semimetric on $\Theta$ to quantify the distance between $\theta$ and $\theta^\ast$.

\paragraph{Assumption A1 (Test condition):} There exist constants $a>0$ and $b>0$ such that for every $\varepsilon>0$ and $\theta_1\in \Theta$ with $d_n(\theta_1, \theta^\ast) > \varepsilon$, there exists a test $\phi_{n,\theta_1}$ such that
\begin{align}\label{Eqn:Test}
\Prob_{\theta^\ast}^{(n)} \phi_{n,\theta_1} \leq e^{-b \,n\,\varepsilon^2}\qquad \mbox{and}\qquad
\sup_{\theta\in\Theta:\, d_n(\theta,\, \theta_1)\leq \,a\, \varepsilon}\Prob_{\theta}^{(n)} (1-\phi_{n,\theta_1}) \leq e^{-b\,n\,\varepsilon^2}.
\end{align}

\noindent Assumption A1 ensures that $\theta^\ast$ is statistically identifiable, which guarantees the existence of a test function $\phi_{n, \theta_1,\lambda}$ for testing against parameters  close to $\theta_1$ in $\Theta$, and provides upper bounds for its Type I and II errors. In the special case when $X_i$'s are i.i.d.~observations and $d_n(\theta,\theta')$ is the Hellinger distance between $\Prob_{\theta}$ and $\Prob_{\theta'}$, such a test $\phi_{n,\theta_1}$ always exists \cite{ghosal2007}.

\paragraph{Assumption A2 (Prior concentration):}
There exist a constant $c > 0$ and a sequence $\{\uepsilon_{n}\}_{n=1}^\infty$ with  $n\, \uepsilon_{n}^2 \to\infty$ such that
\begin{align*}
\Pi (B_{n}(\theta^\ast,\uepsilon_{n})) \geq e^{-c \, n \, \uepsilon_{n}^2}.
\end{align*}
Here for any $\theta_0$, $B_{n}(\theta_0,\varepsilon)$ is defined as the following $\varepsilon$-KL neighbourhood in $\Theta$ around $\theta_0$,
\begin{align*}
B_{n}(\theta_0,\varepsilon)=\{\theta\in\Theta:\, D(p_{\theta_0}^{(n)}, p_{\theta}^{(n)})\leq n\,\varepsilon^2,\, V(p_{\theta_0}^{(n)}, p_{\theta}^{(n)})\leq n\,\varepsilon^2\}.
\end{align*}

When the model is well-specified, a common procedure to verify Assumption A2 is to show that it holds for all $\theta^\ast$ in $\Theta$.  This stronger version of the parameter prior concentration condition characterizes the compatibility of the prior distribution with the parameter space. In fact, this assumption together with Assumption A3 below implies that the prior distribution is almost  ``uniformly distributed'' over $\Theta$. In fact, if the covering entropy $\log N(\uepsilon_{n,\lambda}, \, \mathcal{F}_{n,\lambda}, \, d_n)$  is of the order $n\,\uepsilon_{n,\lambda}^2$, then there are roughly $e^{n\,\uepsilon_{n,\lambda}^2}$ disjoint $\uepsilon_{n,\lambda}$-balls in the parameter space. Assumption A2 requires each ball to receive mass $e^{-c\, n\,\uepsilon_{n,\lambda}^2}$, which matches up to a constant in the exponent of the average prior probability mass $e^{-n\,\uepsilon_{n,\lambda}^2}$ received by those disjoint $\uepsilon_{n,\lambda}$-balls.

\paragraph{Assumption A3 (Sieve sequence condition):} 
For some constant $D>1$, there exists a sequence of sieves $\mathcal{F}_{n} \subset \Theta$, $n=1,2, \ldots$, such that 
\begin{align*}
\log N(\varepsilon, \, \mathcal{F}_{n}, \, d_n) \leq n\,\varepsilon_n^2
\qquad\mbox{and}\qquad \Pi_\lambda(\mathcal{F}_{n}^c) \leq e^{-D\,n\,\varepsilon_n^2}.
\end{align*}

Assumption A3 allows to focus our attention to the most important region in the parameter space that is not too large, but still possesses most of the prior mass. Roughly speaking, the sieve $\mathcal{F}_{n}$ can be viewed as the effective support of the prior distribution at sample size $n$. The construction of the sieve sequence is required merely for the purpose of a proof and is not need for implementing the methods, which is different from sieve estimators.
%For some technical reasons in the proof of model selection consistency, the sieve sequence condition made in this paper is slightly stronger than those in  \cite{ghosal2000} and  \cite{ghosal2007} by assuming the existence of the sieves for all $\varepsilon \geq \uepsilon_{n}$, rather than a single $\varepsilon=\varepsilon_{n}$.  

\btheos[Contraction of the regular posterior~\cite{ghosal2007}]
\label{Thm:rate}
%%%%%%%%%%%%
If Assumptions A1-A3 hold, then there exists a sufficiently large constant $M$ such that the posterior distribution~\eqref{Eqn:normalpost} satisfies
\begin{align*}
&\Exs_{\thetastar}^{(n)}[ \Pi( d_n(\theta, \,\thetastar) > M \,\uepsilon_{n}\, |\, X^{(n)}) ] \rightarrow 0,\quad\mbox{and}\\
& \Exs_{\thetastar}^{(n)}[ \Pi( \theta\in \mathcal{F}_{n} \, |\, X^{(n)}) ] \rightarrow 1,\quad \mbox{as $n\to\infty$.}
\end{align*}
%%%%%%%%%%%%
\etheos

We say that Bayesian procedure for model $\MODEL$ has a posterior contraction rate at least $\uepsilon_{n}$ relative to $d_n$ if the first display in Theorem~\ref{Thm:rate} holds, or simply call $\uepsilon_{n,\lambda}$ as the posterior contraction rate relative to $d_n$ when no ambiguity occurs.
If we have an additional prior anti-concentration condition
\begin{align}\label{Eqn:Anti}
\Pi_\lambda (d_n(\theta,\, \theta^\ast) \leq \lepsilon_{n,\lambda}) \leq e^{- H \, n \, \varepsilon_{n,\lambda}^2},
\end{align}
where $\lepsilon_{n,\lambda} < \varepsilon_{n,\lambda}$ and $H$ is a sufficiently large constant, then by defining a new sieve sequence in Assumption A3 as
\begin{align*}
\widebar{\mathcal{F}}_{n}:\, =\mathcal{F}_{n} \cap \big\{\theta\in\Theta:\, d_n(\theta,\, \theta^\ast) \leq \lepsilon_{n}\big\},
\end{align*}
we have the following two-sided posterior contraction result, as a direct consequence of Theorem~\ref{Thm:rate}.

\bcors[Two sided contraction rate]\label{Corr:normpost}
Under the assumptions of Theorem~\ref{Thm:rate}, if \eqref{Eqn:Anti} is true for some sufficiently large $H$, then
\begin{align*}
\Exs_{\thetastar}^{(n)}[ \Pi( \theta:\, \lepsilon_{n} \leq d_n(\theta, \,\theta^\ast) \leq M \,\varepsilon_{n}\, |\, X^{(n)}) ] \rightarrow 1,\quad \mbox{as $n\to\infty$.}
\end{align*}
\ecors
As we will show in the next section, a prior anti-concentration condition similar to \eqref{Eqn:Anti} plays an important role for Bayesian model selection consistency in ensuring overly large models that contain $\mb P^{(n)}_\thetastar$ to receive negligible posterior mass as $n\to\infty$.

\

\paragraph{Fractional posterior distributions:} Now let us turn to the fractional posterior distribution below that is based on fractional likelihood \cite{walker2001bayesian,martin2016optimal,bhattacharya2016bayesian},
\begin{align}\label{Eqn:quasipost_review}
\Pi_\alpha( \theta \in B\, |\, X^{(n)}) = \frac{ \int_{B}  \big(p^{(n)}_{\theta} (X^{(n)})\big)^\alpha \, \Pi(d\theta) }{ \int_{\Theta} \big(p^{(n)}_{\theta} (X^{(n)})\big)^\alpha  \, \Pi(d\theta)},\quad \forall B \in \m B.
\end{align}
Observe that the fractional-likelihood implicitly penalizes all models that are far away from the true model $\Prob_{\theta^\ast}$ through the following identity,
\begin{align}\label{Eqn:Key_ind}
\mb E^{(n)}_\thetastar\bigg[\bigg(\frac{p^{(n)}_{\theta} (X^{(n)})}{p^{(n)}_{\thetastar} (X^{(n)})} \bigg)^\alpha \bigg] = \exp\big\{-D^{(n)}_\alpha \big(p^{(n)}_{\theta}, p^{(n)}_{\theta_0}\big) \big\}. 
\end{align}
Hence by dividing both the denominator and numerator in \eqref{Eqn:quasipost_review} with an $\theta$-independent quantity $\big(p^{(n)}_{\thetastar} (X^{(n)})\big)^\alpha$, we can get rid of the test condition  A1 and the sieve sequence condition A3 that are used to build a test procedure to discriminate  all far away models in the theory for the contraction of the regular posterior distribution. It is discussed in \cite{bhattacharya2016bayesian} that some control on the complexity of the parameter space is also necessary to ensure the consistency of the regular posterior distribution \cite{ghosal2000,ghosal2007}. Therefore, the fractional posterior distribution is, at least theoretically, more appealing than the regular posterior distribution, since the prior concentration condition A2 alone is sufficient to guarantee the contraction of the posterior (see Theorem \ref{Thm:quasirate} below). More discussions and comparisons can be found in \cite{bhattacharya2016bayesian}.
For notational convenience, we assume the shorthand $D^{(n)}_\alpha(\theta, \thetastar)$ for $D^{(n)}_\alpha(\Prob^{(n)}_\theta, \Prob^{(n)}_\thetastar)$. 

\btheos[Contraction of the fractional posterior \cite{bhattacharya2016bayesian}]
\label{Thm:quasirate}
%%%%%%%%%%%%
If Assumption A2 holds, then there exists a sufficiently large constant $M$ independent of $\alpha$ such that the fractional posterior distribution~\eqref{Eqn:quasipost_review} with order $\alpha$ satisfies
\begin{align*}
&\Exs_{\thetastar}^{(n)}[ \Pi_{\alpha}( n^{-1}\, D^{(n)}_\alpha(\theta, \,\thetastar) > \frac{M}{1-\alpha} \, \uepsilon^2_{n}\, |\, X^{(n)}) ] \rightarrow 0,
\end{align*}
and for any subset $\mathcal{F}_{n}\subset \Theta$ satisfying $\Pi(\mathcal{F}_{n}^c) \leq e^{-D\,\alpha\,n\, \varepsilon_{n}^2}$, where $D$ is a sufficiently large constant independent of $\alpha$,
\begin{align*}
 \Exs_{\thetastar}^{(n)}[ \Pi^{(n)}_{\alpha}( \theta\in \mathcal{F}_{n} \, |\, X^{(n)}) ] \rightarrow 1,\quad \mbox{as $n\to\infty$.}
\end{align*}
%%%%%%%%%%%%
\etheos

In the special case of i.i.d. observations, the $\alpha$-divergence becomes $D^{(n)}_\alpha(\theta,$ $\thetastar) = n\, D_\alpha(\theta, \,\thetastar)$, where $D_\alpha(\theta, \,\thetastar)$ is the $\alpha$-divergence for one observation. In this case, Theorem~\ref{Thm:quasirate} shows that a Bayesian procedure using the fractional posterior distribution has a posterior contraction rate of at least $\uepsilon_{n}$ relative to the average $\alpha$-R\'{e}nyi divergence $n^{-1}D^{(n)}_\alpha$. Similar to corollary~\eqref{Corr:normpost} for the regular posterior distribution, 
if we have an additional prior anti-concentration condition
\begin{align}\label{Eqn:quasiAnti}
\Pi (n^{-1}\,D^{(n)}_{\alpha}(\theta,\, \theta^\ast) \leq \lepsilon^2_{n}) \leq e^{- H\,\alpha \, n \, \varepsilon_{n}^2},
\end{align}
where $\lepsilon_{n} < \varepsilon_{n}$ and $H$ is a sufficiently large constant, then we have the following two-sided posterior contraction result as a corollary.

\bcors\label{Cor::quasirate}
Under the assumptions of Theorem~\ref{Thm:quasirate}, if \eqref{Eqn:quasiAnti} is also true for a sufficiently large constant $H > 0$, then for some sufficiently large constant $M$,
\begin{align*}
\Exs_{\thetastar}^{(n)}[ \Pi_{\alpha}( \theta:\, \lepsilon^2_{n} \leq n^{-1}\,D^{(n)}_\alpha(\theta, \,\theta^\ast) \leq \frac{M}{1-\alpha} \,\varepsilon^2_{n}\, |\, X^{(n)}) ] \rightarrow 1,\quad \mbox{as $n\to\infty$.}
\end{align*}
\ecors

\subsection{PAC-Bayes bound}\label{Sec:PACBayes}
Model selection is typically a more difficult task than parameter estimation. As we will see in the next section, Bayesian model selection consistency is stronger than the property that the (fractional) posterior achieves a certain rate of contraction. In fact,  Bayesian model selection consistency requires an additional identifiability condition (see Assumption B1 in the next section) on the true model $\MODEL_{\lambda^\ast}$ that assumes a proper gap between models that do not contain $\Prob_{\theta^\ast}$ and models  containing $\Prob_{\theta^\ast}$. However, a posterior contraction rate $\varepsilon_n$ is still attainable even when such misspecified models receive considerable posterior mass, as long as $\theta^\ast$ can be approximated by parameters in those models with $d_n$-error not exceeding $\varepsilon_n$. Therefore, a Bayesian procedure may still achieve the estimation optimality without model selection consistency. In our model selection framework, such a property can be best captured and characterized by a Bayesian version of the frequentist oracle inequality --- a PAC-Bayes type inequality \cite{McAllester1998,McAllester1999}. For convenience and simplicity, we only focus on the fractional posterior distribution~\eqref{Eqn:quasipost_review}, which only requires Assumption A2. Similar results also apply to the regular posterior distribution~\eqref{Corr:normpost} when more technical conditions such as Assumptions A1 and A3 are assumed.

Under the setup and notation in Section~\ref{Sec:ContractionRate}, a typical PAC-Bayes type inequality takes the form as
\begin{align*}
\int R(\theta,\thetastar) \Pi(d\theta\,|\,X^{(n)}) \leq \int S_n(\theta,\thetastar) \rho(d\theta) + \frac{1}{\kappa_n} D(\rho,\,\Pi)  + \mbox{Rem},
\end{align*}
for all probability measure $\rho$ that is absolutely continuous with respect to the prior $\Pi$. Here $R$ is the risk function, $S_n$ is certain function that measures the discrepancy between $\theta$ and $\thetastar$ on the support of the measure $\rho$, $\kappa_n$ is a tuning parameter and $ \mbox{Rem}$ is a remainder term. The PAC-Bayes inequality (\cite{bhattacharya2016bayesian}) we review is for the fractional posterior distribution~\eqref{Eqn:quasipost_review}, where the risk function $R$ is a multiple of the $\alpha$-Renyi divergence $D^{(n)}_\alpha$ in~\eqref{Eqn:Key_ind}, and $S_n(\theta,\theta_0)$ a multiple of the negative log-likelihood ratio between $\theta$ and $\theta^\ast$,
$$
r_n(\theta,\theta^\ast):\,=-\log\big(p^{(n)}_{\theta}(X^{(n)})/p^{(n)}_{\thetastar}(X^{(n)})\big).$$

\btheos[PAC Bayes inequality \cite{bhattacharya2016bayesian}]
\label{Thm:PAC-Bayes-one-model}
Fix $\alpha \in (0, 1)$. Then, 
\begin{equation}\label{eq:renyi_bd}
\begin{aligned}
 &\int \frac{1}{n}\, D^{(n)}_{\alpha}(\theta, \thetastar) \,\Pi_{\alpha}(d\theta\,|\,X^{(n)}) \le \frac{\alpha}{n (1 - \alpha)} \int r_n(\theta,\thetastar) \,\rho(d\theta) \\
 &\qquad\qquad\qquad\quad
   + \frac{1}{n(1-\alpha)}  \log(1/\varepsilon),\quad
  \mbox{$\forall$ probability measure $\rho\ll \Pi$},
\end{aligned}
\end{equation}
holds with $\Prob^{(n)}_{\theta^\ast}$ probability at least $(1 - \varepsilon)$.
\etheos

In particular, if we choose the measure $\rho$ to be the conditional prior $\rho_U(\cdot) = \Pi(\cdot \,|\, U)=\Pi(\cdot \cap U)/\Pi(U)$ with $U=B_{n}(\thetastar,\varepsilon_{n})$ being the KL neighborhood in Assumption A2, then we have the following corollary.

\bcors[Bayesian oracle inequality \cite{bhattacharya2016bayesian}]\label{cor:renyi}
Let $\varepsilon_n \in (0, 1)$ satisfy $n \varepsilon_n^2 > 2$ and fix $D > 1$. Then, with $\Prob^{(n)}_{\theta^\ast}$ probability at least $1 - 2/\{(D-1)^2 n \varepsilon_n^2\}$, 
\begin{align}\label{eq:cor_renyi}
 \int \frac{1}{n}\,D^{(n)}_{\alpha}(\theta, \thetastar) \Pi_{\alpha}(d\theta\,|\,X^{(n)}) \le \frac{(D + 1) \alpha}{1 - \alpha}  \varepsilon_n^2  + \Big\{ -\frac{1}{n (1 - \alpha)} \log \Pi(B_{n}(\thetastar,\varepsilon_{n})) \Big\}.
\end{align}
\ecors

\noindent Note the posterior expected risk always provides a upper bound on the estimation loss $n^{-1}\,D^{(n)}_\alpha(\widehat{\theta}_M,\thetastar)$ of the posterior mean $\widehat{\theta}_M:\,=\int \theta\,  \Pi_\alpha(d\theta\,|\,X^n)$, since the loss function $n^{-1}\,D^{(n)}_\alpha(\cdot,\thetastar)$ is convex in its first argument for any $\alpha\in(0,1)$.
Corollary~\ref{cor:renyi} shows that the posterior estimation risk is a trade-off between two terms: the term $\varepsilon_{n}^2$ due to the approximation error, and the term $-\frac{1}{n}\log \Pi(B_n(\thetastar,\varepsilon_n))$ characterizes the prior concentration at $\thetastar$. Similar to the discussion after Assumption A2, this second term can also be viewed as a measurement of the local covering entropy of the parameter space $\Theta$ around $\thetastar$ if the prior is  ``compatible'' to $\Theta$, and therefore reflects the local complexity of the parameter space. 

\paragraph{Definition:} For a model $\m M=\{\mb P_{\theta}^{(n)}:\,\theta\in\Theta\}$, we define its \emph{local Bayesian complexity} at parameter $\thetastar$ with radius $\varepsilon$ as
\begin{align*}
-\frac{1}{n}\log \Pi(B_{n}(\thetastar,\varepsilon)),
\end{align*}
where $B_{n}(\thetastar,\varepsilon)$ is given in Assumption A2. Here $\thetastar$ may or may not belong to $\Theta$.

\

According to Corollary~\ref{cor:renyi}, the Bayesian risk of an $\alpha$-fractional posterior is bounded up to a constant by the square critical radius $(\varepsilon^\ast_n)^2$, where $\varepsilon^\ast_n$ is the smallest solution of 
\begin{align*}
-\frac{1}{n}\log \Pi(B_{n}(\thetastar,\varepsilon))=\alpha\,\varepsilon^2
\end{align*}
(more discussions on the local Bayesian complexity can be found in \cite{bhattacharya2016bayesian}). Using this notation, Assumption A2 can be translated to the fact that the local Bayesian complexity $-\frac{1}{n}\log \Pi(B_{n}(\thetastar,\varepsilon_n))$ at truth $\thetastar$ with radius $\varepsilon_n$ is upper bounded by $c\,\varepsilon_n^2$. When we refer to a local Bayesian complexity without mentioning its radius, we implicitly mean a local Bayesian complexity with the critical radius $\varepsilon^\ast_n$.
Moreover, we will refer to a Bayesian risk bound like \eqref{eq:cor_renyi} as a Bayesian oracle inequality. 
%Under this perspective, a Bayesian oracle inequality is a Bayesian version of the oracle inequality.

%%%%%%%%%%%%%%%%%%%%%%%%%
%%%%%%%%%%%%%%%%%%%%%%%%%

\section{Model selection consistency and oracle inequalities for Bayesian procedures}\label{sec:main}
In this section, we start with our main result on the Bayesian model selection consistency for both the regular and fractional posterior distribution under some suitable identifiability condition on the truth. Next, we present Bayesian oracle inequalities for Bayesian model selection procedures that hold without the strong  identifiability condition.  Our Bayesian oracle inequality characterizes a trade-off between the approximation error and a local complexity of the model via the local Bayesian complexity, and illustrates the adaptive nature of averaging-based Bayesian procedures towards achieving an optimal rate of posterior convergence. Finally, we apply our theory to high dimensional nonparametric regression and density regression with variable selection.

\subsection{Bayesian model selection consistency}\label{Sec:BMSC}

Let us first introduce a few notations. Recall that $\lambdastar$ is the index corresponding to the smallest model $\MODEL_\lambda$ that contains $\Prob^{(n)}_{\thetastar}$. We use $\lambda > \lambda^\ast$ to mean all models $\MODEL_\lambda$ that are different from $\MODEL_\lambdastar$, but contain $\Prob^{(n)}_{\thetastar}$. According to our assumption~\eqref{Eqn:Model_assump} on the model space, we have
$\Theta_\lambdastar\subset \Theta_\lambda$ for each $\lambda > \lambda^\ast$.
Similarly, we use the notation $\lambda < \lambda^\ast$ to mean all models $\MODEL_\lambda$ that do not contain $\MODEL_{\lambdastar}$. Under this notation, the three subsets $\{\lambda:\, \lambda < \lambda^\ast\}$, $\{\lambdastar\}$ and $\{\lambda:\, \lambda > \lambdastar\}$ form a partition of the index set $\Lambda$. 
%In addition, we use $\lambda \leq \lambda^\ast$ to mean that either $\lambda =\lambdastar$ or $\lambda < \lambdastar$ is true. $\lambda\geq \lambdastar$ can be defined in a similar way. 
As a common practice, when $\widetilde{\Theta}$ is a set, we define $d_n(\theta,\widetilde{\Theta})$ as the infimum $d_n$-distance between $\theta$ and any point in $\widetilde{\Theta}$. $D^{(n)}_\alpha(\theta,\widetilde{\Theta})$ is defined in a similar way for the $\alpha$-divergence $D^{(n)}_\alpha$.   We make the following additional assumptions for Bayesian model selection consistency.

\paragraph{Assumption  B1 (Prior concentration for model selection):} 
\begin{enumerate}
\item(Parameter space prior concentration). There exists a sequence, denoted by $\{\uepsilon_{n,\lambdastar}:\,n\geq 1\}$, such that Assumption A2 holds with $\Theta=\Theta_\lambdastar$, $\Pi=\Pi_\lambdastar$ and $\varepsilon_n=\varepsilon_{n,\lambdastar}$. 

\item(Model space prior concentration). The prior weight $\pi_{\lambdastar}$ for $\MODEL_{\lambdastar}$ satisfies $\pi_{\lambdastar} \geq e^{-n\,\uepsilon_{n,\lambdastar}^2}$. 
\end{enumerate}

\

\noindent The parameter space prior concentration requires the prior distribution $\Pi_\lambdastar$ associated with the target model $\MODEL_\lambdastar$ to put enough mass into a KL neighbourhood around the truth $\thetastar$. However, we do not need any condition on the prior concentration for other $\Pi_\lambda$'s with $\lambda\neq\lambdastar$.
The model space prior concentration condition requires a lower bound on the prior mass assigned to the target model $\MODEL_\lambdastar$, which is in the same spirit as the first part on parameter space prior concentration. In fact, if $\pi_{\lambdastar} \leq e^{-C\, n\,\uepsilon_{n,\lambdastar}^2}$ for some sufficiently large constant $C>0$, then Theorem~\ref{Thm:rate} with $\mathcal{F}_n=\{\lambda:\, \lambda\neq \lambdastar\}$ implies that the marginal posterior probability of $\MODEL_\lambdastar$ tends to zero in probability as $n$ grows no matter how well it fits the data. 

\paragraph{Assumption B2 (Parameter space prior anti-concentration):} For each $\lambda>\lambdastar$, there exists a sequence, denoted by $\{\uepsilon_{n,\lambda}:\,n\geq 1\}$, such that $\varepsilon_{n,\lambda} \geq \varepsilon_{n,\lambdastar}$, and for the regular posterior distribution, $\Pi_\lambda (d_n(\theta,\, \thetastar) \leq M \uepsilon_{n,\lambda}) \leq e^{- H \, n \, \uepsilon_{n,\lambdastar}^2}$ holds for some sufficiently large constants $M>0$ and $H>0$;  for the fractional posterior distribution, $\Pi_\lambda (n^{-1}\,D^{(n)}_\alpha (\theta,\, \thetastar) \leq M \uepsilon_{n,\lambda}^2) \leq e^{- H \, n \, \uepsilon_{n,\lambdastar}^2}$.

\

\noindent This assumption ensures that the posterior probability of overly large models that contains the truth tends to zero as $n\to\infty$. The first part that $\uepsilon_{n,\lambdastar} \leq \uepsilon_{n, \lambda}$ for any $\lambda > \lambdastar$ requires the ``anti-contraction rate'' $\uepsilon_{n, \lambda}$ to honestly reflect the complexity of its associated model --- $\varepsilon_{n,\lambda}$ associated with $\MODEL_\lambda$ is expected to become slower as its parameter space $\Theta_\lambda$ becomes larger. We remark that the $\varepsilon_{n,\lambda}$ ($\lambda>\lambdastar$) defined in Assumption B2 can be identified with the rate $\lepsilon_{n}$ in the lower bound in Corollary~\ref{Corr:normpost} or Corollary~\ref{Cor::quasirate} with $\Theta=\Theta_\lambda$ and $\Pi=\Pi_\lambda$. This is  the reason we call it an ``anti-contraction rate''.
According to Corollary~\ref{Corr:normpost} or Corollary~\ref{Cor::quasirate},  Assumption B2 implies that the Bayes factor of $\MODEL_\lambda$ versus $\MODEL_\lambdastar$ 
\begin{align}\label{Eqn:BayesFactor}
\mbox{BF}_\alpha(\MODEL_\lambda;\,\MODEL_\lambdastar) :\,=\frac{\int_{\Theta_\lambda} \big(p^{(n)}_\theta (X^{(n)})\big)^\alpha\,\Pi_\lambda(d\theta)}{\int_{\Theta_\lambdastar} \big(p^{(n)}_\theta (X^{(n)})\big)^\alpha\,\Pi_\lambdastar(d\theta)},\qquad \mbox{for any $\lambda>\lambdastar$},
\end{align}
becomes exponentially small as $n\to\infty$, where $\alpha=1$ or $\alpha\in(0,1)$ depending on whether the regular or the fractional posterior distribution is used.  A combination of this fact with the model space prior concentration condition in A1 implies the posterior probability of $\{\lambda:\, \lambda>\lambdastar\}$, the set of all model indices corresponding to overly large models, tends to zero as $n\to\infty$.

\paragraph{Assumption B3 (Model identifiability):}
There exists $\delta_n>M \uepsilon_{n,\lambdastar}$, where $M$ is some sufficiently large constant independent of $n$, such that for all $\lambda<\lambdastar$, we have $d_n(\thetastar, \, \Theta_\lambda) \geq \delta_n$ for the regular posterior distribution, or $n^{-1}\, D^{(n)}_{\alpha}(\thetastar,\,\Theta_\lambda) \geq \delta_n^2$ for the fractional posterior distribution. 

\

\noindent The separation gap $\delta_n$ in Assumption B3 is the best approximation error of $\thetastar$ using elements in the parameter space $\bigcup_{\lambda<\lambdastar} \Theta_\lambda$ corresponding to such misspecified models. The lower bound condition $\delta_n \geq M \uepsilon_{n,\lambdastar}$ requires the best approximation error $\delta_n$ of misspecified models to be larger than the estimation error $\uepsilon_{n,\lambdastar}$ associated with $\MODEL_{\thetastar}$, so that the Bayes factor $\mbox{BF}_\alpha(\MODEL_\lambda;\,\MODEL_\lambdastar)$ defined in~\eqref{Eqn:BayesFactor} for $\lambda<\lambdastar$ also becomes exponentially small as $n\to\infty$, implying that $\MODEL_{\thetastar}$ is identifiable.
In the special case of high dimensional sparse linear regression, Assumption B3 becomes the  beta-min condition  \cite{Wainwright2009} on the minimal magnitude of nonzero regression coefficients, which is necessary for variable selection consistency.

\paragraph{Assumption B4 (Control on model complexity):} 
For each $\lambda< \lambdastar$, Assumptions A1 and A3 hold with $\Theta=\Theta_\lambda$, $\Pi=\Pi_\lambda$ and $\varepsilon_n=\delta_n$; and for each $\lambda> \lambdastar$, Assumptions A1 and A3 hold with $\Theta=\Theta_\lambda$, $\Pi=\Pi_\lambda$ and $\varepsilon_n=\varepsilon_{n,\lambda}$.
Moreover,  $\sum_{\lambda< \lambdastar} e^{-C\, n\,\delta_n^2} + \sum_{\lambda>\lambda} e^{-C\, n\,\varepsilon_{n,\lambda}^2} \leq 1$ holds for some sufficiently large constant $C>0$.

\

\noindent The first part of this assumption is the reminiscent of Assumptions A1 and A3 for controlling the model complexity of each $\MODEL_\lambda$ when the regular posterior distribution is used. The second part is due to technical reasons in the proof of Theorem \ref{Thm:Main} (some control when applying a union bound).

\btheos[Model selection consistency]
\label{Thm:Main}
%%%%%%%%%%%%%%%

\

\begin{enumerate}
\item(Regular posteriors).
Under Assumptions B1-B4, we have
\begin{align*}
&\Exs_{\thetastar}^{(n)}[ \Pi( \lambda = \lambdastar \, |\, X^{(n)}) ] \rightarrow 1,\quad \mbox{as $n\to\infty$.}
\end{align*}

\item(Fractional posteriors).
Under Assumptions B1-B3, we have
\begin{align*}
&\Exs_{\thetastar}^{(n)}[ \Pi_\alpha( \lambda = \lambdastar \, |\, X^{(n)}) ] \rightarrow 1,\quad \mbox{as $n\to\infty$.}
\end{align*}
\end{enumerate}
%%%%%%%%%%%%%%%
\etheos

\noindent Similar to the theorems in Section~\ref{Sec:ContractionRate} on the contraction rate of the regular and the fractional posterior distributions, the latter is more flexible than the former since it does not demand any assumption like B4 to control the model complexity in order to achieve the Bayesian model selection consistency. Comparing the anti-concentration conditions in B2 for the regular posterior and the fractional posterior, the former has an additional flexibility in the choice of the distance measure $d_n$ due to the additional test condition A1 made for the regular posterior. However, $d_n$ is typically a weaker or equivalent distance measure than the Renyi divergence with certain order $\alpha$. Therefore, the anti-concentration condition for the regular posterior is often a stronger condition than that for the fractional posterior. For the fractional posterior distribution, when a $D_\alpha^{(n)}$ neighborhood of $\thetastar$ is comparable to the KL-neighborhood in the definition of local Bayesian complexity (for example, regression problems with Gaussian errors), a combination of Assumptions B1 and B2 imposes a two-sided constraint on the local Bayesian complexity on the true model $\MODEL_{\lambda^\ast}$, implying that the fractional posterior tends to concentrate all its mass on the model that contains the truth $\thetastar$ with smallest local Bayesian complexity.

\subsection{Posterior contraction rate and Bayesian oracle inequality for model selection}\label{Sec:BOI}
Following Section~\ref{Sec:PACBayes}, we show in this subsection that even when model selection consistency does not hold, a Bayesian procedure may still achieve estimation optimality by adaptively concentrating on models with fastest rate of contraction.

\paragraph{Regular posterior distributions:}
First, we focus on the use of the regular posterior distribution~\eqref{Eqn:normalpost}. We require a stronger version of the sieve sequence condition.

\paragraph{Assumption A3$'$ (Sieve sequence condition):} 
For some constant $D>1$, and any $\varepsilon >  0$, there exists a sequence of sieves $\mathcal{F}_{n} \subset \Theta$, $n=1,2, \ldots$, such that 
\begin{align*}
\log N(\varepsilon, \, \mathcal{F}_{n}, \, d_n) \leq n\,\varepsilon^2
\qquad\mbox{and}\qquad \Pi_\lambda(\mathcal{F}_{n}^c) \leq e^{-D\,n\,\varepsilon^2}.
\end{align*}

\noindent Assumption A3$'$ is stronger than Assumption A3 since it assumes the existence of the sieve sequence for any $\varepsilon>0$ rather than a single $\varepsilon=\varepsilon_{n,\lambda}$. In most cases, the sieve sequence constructed for verifying Assumption A3 naturally extends to general $\varepsilon$ (see the examples in the following subsections). The following theorem shows that when Assumptions A1 and A3$'$ are true with $\Theta=\Theta_\lambda$, $\Pi=\Pi_\lambda$ for each $\lambda\in\Lambda$, the posterior distribution automatically adapts to the best contraction rate over all models.

\btheos[Contraction rate of regular posterior distributions]\label{Thm:normalpostMSrate}
If Assumptions A1 and A3$'$ with $\Theta=\Theta_\lambda$, $\Pi=\Pi_\lambda$ hold for each $\lambda\in\Lambda$, then as $n\to\infty$,
\begin{align*}
\Exs_{\thetastar}^{(n)}\bigg[ \Pi\bigg( d^2_n(\theta, \,\thetastar) 
&> M \,\bigg(\min_{\lambda\in\Lambda} \, \Big\{ \min_{\varepsilon_\lambda>0}\Big\{\varepsilon^2_{\lambda} 
+ \Big(-\frac{1}{n}\log \Pi_\lambda(B_n(\theta^\ast,\,\varepsilon_\lambda))\Big)\Big\} \\
&\qquad\qquad\quad \ \ + \Big(-\frac{1}{n}\log \pi_\lambda\Big)\Big\} + \frac{\log |\Lambda|}{n}\bigg) \, \bigg|\, X^{(n)}\bigg) \bigg] \rightarrow 0.
\end{align*}
\etheos

\noindent Theorem~\ref{Thm:normalpostMSrate} shows that the overall rate of contraction under model selection is composed of three terms. The first term, 
$$
\varepsilon_{n,\lambda}^2:\,=\min_{\varepsilon_\lambda>0}\Big\{\varepsilon^2_{\lambda} + \Big(-\frac{1}{n}\log \Pi_\lambda(B_n(\theta^\ast,\,\varepsilon_\lambda))\Big)\Big\}, \quad \mbox{for $\lambda\in\Lambda$},
$$
characterizes the rate of posterior contraction under model $\MODEL_\lambda$ thorough balancing between the approximation risk $\varepsilon^2_{\lambda}$ and local Bayesian complexity $-(1/n)\log \Pi_\lambda(B_n(\theta^\ast,\,\varepsilon_\lambda))$. The second term $-(1/n)\log \pi_\lambda$ can be viewed as the local Bayesian complexity 
over the space of model index set $\Lambda$ that reflects the prior belief over different models. The third term $\sqrt{\log|\Lambda|/n}$ is a model selection uncertainty term, proportional to the logarithm of the cardinality of models, allowing us to attain the estimation consistency with $|\Lambda|$ growing at most exponentially large in the sample size. Under the special case of an ``objective prior" that assigns equal prior weight to each model, the second model space complexity term has the same order as the third model selection uncertainty term. In general, the posterior distribution adaptively achieves the fastest rate of contraction that optimally trades-off those three terms, that is,
\begin{align*}
\min_{\lambda\in\Lambda} \Big\{\varepsilon_{n,\lambda}^2 + \Big(-\frac{1}{n}\log \pi_\lambda\Big)\Big\} + \frac{\log |\Lambda|}{n}.
\end{align*}
Theorem~\ref{Thm:normalpostMSrate} requires much weaker conditions than Theorem~\ref{Thm:Main}. The former does not require any model identifiability condition nor the anti-concentration condition, which is consistent with the intuition that an optimal convergence rate can be attainable without model selection consistency.

\paragraph{Fractional posterior distributions:}
Now let us turn to the fractional posterior distribution~\eqref{Eqn:quasilpost} based on fractional likelihood functions. First, we present our result on its contraction rate.

\btheos[Contraction rate of fractional posterior distributions]\label{Thm:quasipostMSrate}
For any $\alpha\in(0,1)$, we have, as $n\to\infty$,
\begin{align*}
\Exs_{\thetastar}^{(n)}\bigg[ \Pi_\alpha\bigg( \frac{1}{n} D^{(n)}_\alpha(\theta, \,\thetastar) 
 >&\, \frac{M}{1-\alpha} \,\min_{\lambda\in\Lambda} \bigg\{ \min_{\varepsilon_\lambda>0}\Big\{\varepsilon^2_{\lambda} + \Big(-\frac{1}{n}\log \Pi_\lambda(B_n(\theta^\ast,\,\varepsilon_\lambda))\Big)\Big\}\\
&\qquad\qquad\qquad+ \Big(-\frac{1}{n}\log \pi_\lambda\Big)\bigg\} \, \bigg|\, X^{(n)}\bigg) \bigg] \rightarrow 0.
\end{align*}
\etheos

\noindent Comparing to Theorem~\ref{Thm:normalpostMSrate}, Theorem~\ref{Thm:quasipostMSrate} does not demand any conditions like Assumption A1 or A3 for controlling the complexity of models, and obviates the model selection uncertainty term $\log|\Lambda|/n$ (which is due to a union bound for controlling the covering entropy of $\Theta=\bigcup_{\lambda\in\Lambda}\Theta_\lambda$). As a consequence, a fractional posterior distribution may lead to faster posterior contraction rate than a regular posterior distribution (see our example in Section~\ref{Sec:NonReg} for variable selection of nonparametric regression). More interestingly, a fractional posterior distribution may still be able to achieve estimation consistency without the typical constraint on the number of models, such as  $\log |\Lambda| = o(n)$ provided the prior $\{\pi_\lambda\}$ is properly chosen (more discussion is provided after Theorem \ref{Thm:quasipostMS_Oracle} below). This interesting phenomenon can be explained by the averaging-based analysis nature of Bayesian approaches, as opposed to the worst case analysis nature of optimization based approaches (more discussions on the comparison between typical Bayesian analysis and frequentist analysis can be found in \cite{bhattacharya2016bayesian}).  By applying Theorem~\ref{Thm:PAC-Bayes-one-model}, we have the following Bayesian oracle inequality for the fractional posterior distribution. 
\btheos[Bayesian oracle inequality for model selection]\label{Thm:quasipostMS_Oracle}
For any $\alpha\in(0,1)$, if $M$ is some sufficiently large constant independent of $\alpha$ and $n$, then with $\Prob^{(n)}_{\theta^\ast}$ probability tending to one as $n\to\infty$,
\begin{align*}
 &\int \frac{1}{n}D^{(n)}_\alpha(\theta, \thetastar) \Pi_\alpha(d\theta\,|\, X^{(n)}) \le\\
 &\qquad \frac{M}{1 - \alpha} \min_{\lambda\in\Lambda} \, \Big\{ \min_{\varepsilon_\lambda>0}\Big\{ \varepsilon^2_{\lambda} + \Big(-\frac{1}{n}\log \Pi_\lambda(B_n(\theta^\ast,\,\varepsilon_\lambda))\Big)\Big\} + \Big(-\frac{1}{n}\log \pi_\lambda\Big)\Big\}.
\end{align*}
\etheos

\noindent Similar to the observations in Theorem~\ref{Thm:quasipostMSrate} and the remarks after Theorem~\ref{Thm:Main}, this theorem shows that the Bayesian risk for model selection is the best trade-off between the approximation error $\varepsilon_{n,\lambda}^2$, where $\varepsilon_{n,\lambda}$ is defined as the minimizer of $\varepsilon_\lambda$ in the inner minimization step for a fixed $\lambda$, and a model complexity penalty term consisting of the local Bayesian complexity $-(1/n) \log \Pi(B_n(\thetastar,\varepsilon_{n,\lambda}))$ of the model $\MODEL_\lambda$ and the local Bayesian complexity  $ -(1/n) \log \pi_{\lambda}$ of the model space $\MODEL$. Due to the averaging-based analysis nature of a Bayesian procedure, the proof does not rely on any sophisticated empirical process tools, but only elementary Chebyshev's inequality.
Again, the overall Bayesian risk does not explicitly depend on the cardinality $|\Lambda|$ of the model space. However, when no informative prior knowledge is available on which models are more likely, we would like to put a non-informative prior distribution on the model space, which implicitly entails some constraints on $|\Lambda|$.
For example, in order for $-(1/n)  \log \pi_{\lambda}$ to not substantially exceed $-(1/n) \log \Pi(B_n(\thetastar,\varepsilon_{n,\lambda}))$ for any $\lambda\in\Lambda$, we need a condition like $\pi_\lambda \geq e^{-n\,\varepsilon_{n,\lambda}^2}$, $\forall \lambda\in\Lambda$. This condition implies a constraint on $|\Lambda|$ through 
$$\sum_{\lambda\in\Lambda} e^{- n\, \varepsilon_{n,\lambda}^2} \leq \sum_{\lambda\in\Lambda}\pi_\lambda =1,$$
implying that $\log |\Lambda| = o(n)$ when $\sup_{\lambda\in\Lambda} \varepsilon_{n,\lambda}=O(n^{-1/2})$.

\subsection{Application to high dimensional nonparametric regression}\label{Sec:NonReg}
In this subsection, we consider high dimensional nonparametric regression 
\begin{align*}
Y_i = f(X^i) + w_i,\qquad w_i\sim N(0,\sigma^2),
\end{align*}
 where $X^i\in[0, 1]^p$ is the $i$th observed $p$-dimensional covariate vector, $Y_i$ is the response, $w_i$'s are i.i.d.~Gaussian noise and $f$ is an unknown regression function. For simplicity, we assume that the error standard deviation $\sigma$ is known. In the high dimensional regime, $p$ is allowed to be comparable or much larger than the sample size $n$. 
 \cite{zou2010nonparametric,savitsky2011variable} developed  Bayesian methods for variable selection in this context by placing a sparsity inducing Gaussian process prior on $f$.  The goal of this section is to theoretically investigate variable selection consistency of similar Bayesian procedures.  
% However, without any constraints on sparsity with respect to dependence on covariate or smoothness of $f$, the regression problem is vacuous since we can always find an $f$ that perfectly fits the response. 

 In the fixed-dimensional context, 
  \cite{bhattacharya2014anisotropic} developed theory for optimal posterior contraction using Gaussian process priors with multiple-bandwidths under the assumption that the true regression function depends on a subset of predictors.    \cite{yang2015b} studied the minimax estimation risk of high dimensional nonparametric regression under three different structural assumptions on $f$. In this article,  we will focus on the sparsity structure with respect to the dependence on covariates as in   \cite{bhattacharya2014anisotropic}. The sparsity constraint says that $f$ only depends on a small subset $I^\ast\subset [p]:\,=\{1,2,\ldots,p\}$ of size $d^\ast$. The model space $\Lambda$ is the power set of $[p]$. Therefore, we will identify each model index $\lambda$ with a subset $I$ of $[p]$. Without loss of generality, we may assume that the true model corresponds to the index set $I^\ast = \{1,\ldots, d^\ast\}$ by reordering the covariates, where $d^\ast$ is the number of important covariates included in the true model $\MODEL_{I^\ast}$:
\begin{align*}
Y_i = f(X^i_{I^\ast}) + w_i,\qquad w_i\sim N(0,\sigma^2),
\end{align*} 
 where for any $I \subset [p]= \{1, 2, \ldots, p\}$ and any vector $b\in \bbR^p$, we use the notation $b_I\in \bbR^I$ to denote the projection of $b$ onto the coordinates along $I$.  
Under this setup, the sets $\{\lambda:\,\lambda < \lambdastar\}$ and $\{\lambda:\,\lambda > \lambdastar\}$ become $\{I\in[p]:\, I \not\supset I^\ast\}$ and $\{I\in[p]:\, I \supsetneq I^\ast\}$ respectively. For technical reasons, we assume that an upper bound $A_{\infty}$ on the sup-norm $\|f\|_{\sup}$ is known.
We made the following assumption on the distribution of the $p$-dimensional covariate vector $X=(X_1,\ldots,X_p)$, under which Assumption B3 has a simpler equivalent formulation.

\paragraph{Assumption NP-C (Independent design):} $X_1,\ldots,X_p$ are independent and identically distributed with marginal density $\mu$ over the unit interval $[0,1]$, where $\mu$ satisfies $\mu(x)\geq c_0>0$, $\forall x\in[0,1]$.

\

Let the parameter space associated with $\MODEL_I$ be $\Theta_I = \mathbb{C}^\alpha([0,1]^I)$, the H\"older class of functions on 
$\mathbb{R}^I$ having smoothness $\alpha$ and choose the distance metric $d_n(f_1, f_2) =  \norm{f_1 - f_2}_{\mu, 2}:\,= \int_{[0,1]^p} \{f_1(x) -f_2(x)\}^2 \mu^p(x) dx$ for $f_1, f_2 \in \mathbb{L}_2(\mu^p; \, [0,1]^{p})$ where $\mu^p$ is a product probability measure on $[0,1]^{p}$ and $\mathbb{L}_2(\rho^n;\,  [0,1]^{p}) = \{f: [0,1]^{p}\to \mathbb{R}: \int f^2(x)\, \mu^p(x)\,dx < \infty\}$. 
 Let $\mathbb{N}_0$ be the set of all non-negative integers and let $\{e_v(x) = \prod_{j\in I}e_{v_j}(x_j): v \in \mathbb{N}_0^{I^*}\}$ be a set of orthonormal basis functions of  $\mathbb{L}_2(\mu; [0, 1]^{I^*})$ with $e_0 \equiv 1$. Under random design and the assumption that the regression function is bounded,  we have $D_{\alpha} (f_1,f_2) = C\, \alpha\, \|f_1-f_2\|_{\mu,2}^2= C\, \alpha\, d_n^2(f_1,\,f_2)$, where $C$ is some constant only depending on $A_{\infty}$.
Therefore, Assumption B3 becomes:

\paragraph{Assumption NP-B3 (Identifiability condition for nonparametric regression):} There exists $\delta_n > 0$ such that for all $I \not\supset I^\ast$, we have $d_n(f^*, \Theta_I) \geq \delta_n \geq M \varepsilon_{n, I^\ast}$ under the regular posterior distribution; and $d_n(f^*, \Theta_I) \geq C\, \alpha^{-1/2}\, \delta_n$ $\geq C\, \alpha^{-1/2}\,M \varepsilon_{n, I^\ast}$ under the $\alpha$-fractional posterior distribution.

\

The following lemma provides a characterization of gap $\delta_n$ when using a wrong model that miss at least one important covariate.

\bprops[Identifiability for variable selection in nonparametric regression]
\label{Prop:NPB2}
Under Assumption NP-C, we can choose
\begin{align*}
\delta_n^2:\,&=\inf_{I \not\supset I^\ast} d_n^2(f^\ast, \Theta_I) \\
&= \min_{j=1,\ldots,d^\ast}
\bbE[ \mbox{Var}[f^\ast(X_{I^\ast})\, | \, X_{I^\ast\setminus\{j\}}]]
= \min_{j=1,\ldots,d^\ast}\sum_{v \in \mathbb{N}_0^{I^\ast}, v_{j} \neq 0} \langle f^*, e_v\rangle^2.
\end{align*}
\eprops

The conclusion of this proposition is intuitive: $\delta_n$ reflects the average relative change in one coordinate when the rest are held fixed. Moreover, due to the independence assumption on the design, $\delta_n$ only depends on the important covariates $X_1,\ldots,X_{d^\ast}$.
The last expression in the proposition is of a similar form as the assumption in Equation (2) in \cite{comminges2012}. Intuitively, this proposition, or Assumption NP-B3, states that the basis coefficients of $f^*$ associated with the relevant variables should be sufficiently large for statistical identifiability.

\

Now we specify our Bayesian variable selection model for high dimensional nonparametric regression under the sparsity constraint. For simplicity, we identify the subset $I$ with a $p$-dimensional binary vector, called the inclusion indicator vector, whose $j$th element is one if and only if $j$ is in the subset. Under this identification, we can write $I=(I_1,\ldots,I_p) \in\{0,1\}^p$. Conditional on a  $\MODEL_I$ being selected, we place a Gaussian process (GP) prior with inverse Gamma bandwidth \cite{vandervaart2009} on $f$. The law of a $d$-dimensional GP  $W=(W_x;x\in [0,1]^d)$ is completely determined by its mean function $m(x)=EW_x$ and covariance function $K(x,x')=E(W_x-m(x))(W_{x'}-m(x'))$. \cite{vandervaart2009} recommended using a zero mean GP with a squared exponential kernel $K^a(x,x'):\,=\exp(-a^2||x-x'||^2)$, where $a$ is an inverse bandwidth hyperparameter whose prior satisfies $A^d \sim \mbox{Ga}(a_1,a_2)$, where $\mbox{Ga}(a,b)$ stands for the Gamma distribution with shape and scale parameter $(a,b)$ and $(a_1,a_2)$ are some constants. \cite{vandervaart2009} showed that in the absence of model selection, a Bayesian regression model with this GP prior leads to a minimax-optimal posterior contraction rate up to $\log n$ factors. For technical reasons, we consider a variant of this prior, denoted by $\Pi^{\mathrm{GP}}$,  as the conditional GP prior on the set $\{ \|f\|_{\sup} \leq A_\infty, A \geq n^{\frac{1}{2\beta +d}}\}$.
%, where $f^\ast$ is the regression function corresponding to the true data generating model and $\beta$ is the exponent of H\"{o}lder smoothness. 
Some discussions on conditioning on $\{ \|f\|_{\sup} \leq A_\infty\}$ for random design nonparametric regression can be found in \cite{yang2015b}.  We impose the constraint $\{A \geq n^{\frac{1}{2\beta +d}}\}$ in order to verify the prior anti-concentration assumption B2. This specific form of prior is only designed to illustrate the applicability of our general theory on Bayesian model selection consistency, and we do not pursue a more practically convenient prior specification in the current paper.
In our model selection setup, we use the notation $\Pi^{\mathrm{GP}}_{I}$ to denote the $\Pi^{\mathrm{GP}}$ prior over functions of $X_I \in [0,1]^{I}$.

Following \cite{yang2015b}, we consider the following GP variable selection (GPVS) prior for high dimensional nonparametric regression with variable selection:
\begin{equation}\label{eq:GPVSp}
\begin{aligned}
\pi_I &\propto p^{-|I|}(1-p^{-1})^{p-|I|}\mathbb{I}(|I|\leq d_0),\\
f\, |\, I &\sim \Pi^{\mathrm{GP}}_I,
\end{aligned}
\end{equation}
where $\mathbb{I}(\cdot)$ is the indicator function, $d_0$ is a prespecified hyperparameter, interpreted as the prior belief on the maximum number of important predictors.  The following theorem shows that a Bayesian procedure under this hierarchical prior is Bayesian model selection consistent.  In the applied Bayesian literature, similar priors have been used to select variables in the context of gene-expression studies \cite{zou2010nonparametric,savitsky2011variable}. 

\btheos[Variable selection consistency for high dimensional nonparametric regression]\label{Thm:HNRconsistency}
Assume that the true regression function $f^\ast \in \mathbb{C}^\beta([0,1]^{I^\ast})$ only depends on $x_{I^\ast}$.  Let $d^\ast = |I^\ast|$.
If $\beta \geq d_0/2$, and 
\begin{enumerate}
\item  for the regular posterior distribution, Assumption NP-B3 holds with
\begin{align*}
\delta_n=M\, \bigg(n^{-\frac{\beta}{2\beta + d^\ast}} \, (\log n)^{\frac{4\beta +d^\ast}{4\beta +2 d^\ast}} \wedge \sqrt{\frac{d_0\log p}{n}}\bigg),
\end{align*}
for sufficiently large $M$;

\item for the $\alpha$-fractional posterior distribution, Assumption NP-B3 holds with
\begin{align*}
\delta_n=\frac{M}{\sqrt{\alpha\,(1-\alpha)}}\,\bigg( n^{-\frac{\beta}{2\beta + d^\ast}} \, (\log n)^{\frac{4\beta +d^\ast}{4\beta +2 d^\ast}} \wedge \sqrt{\frac{d^\ast \log p}{n}}\bigg),
\end{align*}
for sufficiently large $M$ independent of $\alpha$,
\end{enumerate}
 then under the GPVS prior~\eqref{eq:GPVSp}, it holds that 
\begin{align*}
&\Exs_{f^\ast}^{(n)}[ \Pi_\alpha( I = I^\ast \, |\, X_1,\ldots,X_n) ] \rightarrow 1,\quad \mbox{as $n\to\infty$.}
\end{align*}
\etheos

The main difficulty in the proof is the verification of the prior anti-concentration condition B2. To achieve that, we propose a general technique for obtaining lower bounds to small ball probabilities of any stationary GP. First,  the small ball probability lower bound is related to a lower bound of the covering entropy of the Reproducing kernel Hilbert space associated with the GP. Second, we use a volume argument to obtain a lower bound of the covering entropy, where the lower bound depends on the eigenvalues associated with the covariance kernel of the GP. Finally, we provide a general lemma (Lemma~\ref{Eqn:EigenSystem}) to characterize the eigenvalues of any one-dimensional stationary kernel over $[0,1]^2$, which may be of independent interest. Comparing the conditions using the regular and the fractional posterior distribution in Theorem~\ref{Thm:HNRconsistency}, the only difference is on the lower bound of the gap $\delta_n$. For the regular posterior distribution, the lower bound of  $\delta_n$ depends on the largest dimension $d_0$ of the model in the support of the prior.  For the fractional posterior distribution, the lower bound depends on the potentially much smaller dimension $d^\ast$ of the true regression function, although sacrificing a multiplicative factor $\alpha^{-1/2}\,(1-\alpha)^{-1/2}$. The next result shows the rate of contraction for high dimensional nonparametric regression under the GPVS prior.  The proof of Theorem  \ref{Thm:HNR_rate} is a straightforward application of Theorems \ref{Thm:normalpostMSrate} and \ref{Thm:quasipostMSrate} and hence is omitted.

\btheos[Contraction rate for high dimensional nonparametric regression]\label{Thm:HNR_rate}
Assume that the true regression function $f^\ast$ only depends on $x_{I^\ast}$ and is $\beta$-H\"{o}lder smooth. Let $d^\ast = |I^\ast|$.
Under the GPVS prior~\eqref{eq:GPVSp}:
\begin{enumerate}
\item the regular posterior distribution has a contraction rate relative to the $\|\cdot\|_{\mu,2}$ norm at least
\begin{align*}
\varepsilon_n=n^{-\frac{\beta}{2\beta + d^\ast}} \, (\log n)^{\frac{4\beta +d^\ast}{4\beta +2 d^\ast}} \wedge \sqrt{\frac{d_0 \log p}{n}};
\end{align*}

\item the $\alpha$-fractional posterior distribution has a contraction rate relative to the $\|\cdot\|_{\mu,2}$ norm at least
\begin{align*}
\varepsilon_n=\frac{1}{\sqrt{\alpha\,(1-\alpha)}}\, \bigg(n^{-\frac{\beta}{2\beta + d^\ast}} \, (\log n)^{\frac{4\beta +d^\ast}{4\beta +2 d^\ast}} \wedge \sqrt{\frac{d^\ast \log p}{n}}\bigg).
\end{align*}
\end{enumerate}
\etheos

\noindent Similar to Theorem~\ref{Thm:HNRconsistency}, fractional posteriors reveal better adaptation on the sparsity level $d^\ast$ in the contraction rate than regular posteriors. It is unclear whether this difference in the rate is due to the inefficiency of regular posteriors or deficiency of current proof techniques (there are examples where fractional posteriors contract while the regular posterior does not, for example, see \cite{bhattacharya2016bayesian}).

%%%%%%%%%%%%%%%%%%%%%%%%%%%%%%%
%%%%%%%%%%%%%%%%%%%%%%%%%%%%%%

\subsection{Application to density regression with variable selection}\label{Sec:DenReg}
In this section, we study consistency of  variable selection in the context of estimating conditional densities which may possibly
depend on a fixed subset of predictors. To that end, we define $\mathcal{Y} \subset \mathbb{R}$ be the response space, $\mathcal{X} = [0,1]^p$ be the covariate space, and $\mathcal{Z}=\mathcal{Y} \times \mathcal{X}$. Let $\mathcal{F}$ denote a space of conditional densities with respect to the Lebesgue measure,
\[
\mathcal{F} = \bigg\{ f:\mathcal{Y} \times \mathcal{X} \rightarrow [0, \infty) \mbox{ - Borel measurable, } 
\int f(y|x)dy =1, \; \forall x \in \mathcal{X} \bigg\}.
\]
Suppose $\{(Y^i,X^i), i=1, \ldots, n\}$ is a random sample from the joint density $f_0 \mu^p$, where  $f_0 \in \mathcal{F}$ and
$\mu^p$ is a product probability measure in $\mathcal{X}$ with respect to the Lebesgue measure. 
Let $P_0$ and $E_0$ denote the probability measure and expectation corresponding to $f_0 \mu^p$.
For $f_1, f_2 \in \mathcal{F}$, 
\[d_h(f_1,f_2) = \left(\int \left(\sqrt{f_1(y|x)}-\sqrt{f_2(y|x)}\right)^2 \mu^p(x) dy dx \right)^{1/2}\;\mbox{ and }\]  
	\[d_1(f_1,f_2) = \int  |f_1(y|x)-f_2(y|x)| \mu^p(x) dy dx\]
denote analogs of the Hellinger and total variation distances correspondingly.  Also, let us denote the Hellinger distance for the joint densities by $d_H$. 

Let us denote the largest integer that is strictly smaller than $\beta$ by 
$\lfloor \beta \rfloor$.
For $L:\mathcal{Z}\rightarrow[0,\infty)$, $\tau_0 \geq 0$, and $\beta>0$,
a class of locally H\"older functions, $\mathcal{C}^{\beta,L,\tau_0}(\mathcal{Z})$, consists of 
$f:\mathbb{R}^d\rightarrow \mathbb{R}$ such that for $k=(k_1,\ldots,k_d)$, $k_1+\cdots+k_d \leq \lfloor \beta \rfloor$, mixed partial derivative of order $k$, $D^k f$, is finite and for $k_1+\cdots+k_d = \lfloor \beta \rfloor$ and $\Delta z \in \mathcal{Z}$,
\[
|D^k f (z+\Delta z) - D^k f (z)| \leq L(z) ||\Delta z||^{\beta-\lfloor \beta \rfloor} e^{\tau_0 ||\Delta z||^2}.  
\]
We identify $\Theta_I$ as $\mathcal{C}^{\beta,L,\tau_0}(\mathcal{Y} \times [0, 1]^I)$.  
% $J(\epsilon, A, \rho)$ denotes the $\epsilon$-covering number of the set $A$ with respect to the metric $\rho$. 
The following identifiability  condition is the analogue of NP-B3 in the case of density regression.  
In this case, for $f_1, f_2 \in   \mathcal{C}^{\beta,L,\tau_0}(\mathcal{Z})$, $D_{\alpha} (f_1,f_2)$ is given by
\begin{eqnarray*}
D_\alpha(f_1, f_2) =   \frac{1}{\alpha -1} \log \int_{\mathcal{Z}} f_1^\alpha(y|x)  f_2^{1-\alpha}(y|x) \mu^p(x) dydx. 
\end{eqnarray*}

\paragraph{Assumption DR-B3 (Identifiability condition for density regression):}
There exists $\delta_n > 0$ such that for all $I \not\supset I^\ast$, we have $d_h(f^*, \Theta_I) \geq \delta_n \geq M \varepsilon_{n, I^\ast}$ under the regular posterior distribution; and $D_{\alpha}(f^*, \Theta_I) \geq  \delta_n^2 \geq  M^2 \varepsilon^2_{n, I^\ast}$ under the $\alpha$-fractional posterior distribution.

In the special case when $\alpha \in [1/2,1)$, it is sufficient to have $d_{h}(f^*, \Theta_I) \geq  (1/2)\delta_n^2$ in DR-B3 for the fractional posterior distribution.  
\paragraph{Assumption DGP (Assumption on the data generating process):}
We assume that $f^* \in \mathcal{C}^{\beta,L,\tau_0}$ and depends on  $x_{I^\ast}$ with $d^\ast = |I^\ast|$,  $\mu$ is assumed to be bounded from below and above and  
 for all $k \leq \lfloor \beta \rfloor$ and some $\varepsilon>0$,
	\begin{equation}
	\label{eq:asnE0Dff0_Lf0}
	\begin{aligned}
	\int_{\mathcal{Z}} \left|\frac{D^k f^*(y|x)}{f^*(y|x)}\right|^{(2\beta+\varepsilon)/k} f^*(y|x) \, dy\, dx&< \infty,
	\\
	\int_{\mathcal{Z}} \left|\frac{L(y,x)}{f^*(y|x)}\right|^{(2\beta+\varepsilon)/\beta} f^*(y|x) \, dy\, dx&< \infty.
%	E_0 \left([L/f_0]^{(2\beta+\epsilon)/\beta} \right) < \infty;
\end{aligned}
	\end{equation} 
	%\begin{equation}
	%\label{eq:E0Lf0}
%E_0 \left([L/f_0]^{(2\beta+\epsilon)/\beta} \right) < \infty;
	%\end{equation} 
Assume, for all $x \in [0, 1]^{I^\ast}$, all sufficiently large $y \in \mathcal{Y}$ and some positive $(c,b,\tau)$, 
\begin{equation}
	\label{eq:asnf0_exp_tails}
f^*(y|x) \leq c \exp(-b ||y||^\tau). 
	\end{equation}

\

In our model selection setup, we use the notation $\Pi^{\mathrm{DR}}_{I}$ to denote a prior over conditional densities defined on  $\mathcal{Y} \times [0,1]^{I}$. 
$\Pi^{\mathrm{DR}}_{I}$ is defined by a location mixture of normal densities 
\begin{equation}
\label{eq:cond_mix_def}
p(y|x, \theta, m) = \sum_{j=1}^m\frac{ \alpha_j \exp\{-0.5||x-\mu_j^x||^2/\sigma^2 \}   
}
{\sum_{i=1}^m \alpha_i \exp\{-0.5||x-\mu_i^x||^2/\sigma^2 \}   
}\phi_{\mu_j^y,\sigma}(y),
\end{equation}
where $\sigma \in (0,\infty)$ and we choose
a prior on $\theta=(\mu_j^y, \mu_j^x, \alpha_j, j=1,2,\ldots,m)$,
where $\alpha_j \in [0,1]$, $\mu_j^y \in \mathbb{R}^{}$, $\mu_j^x \in \mathbb{R}^{|I|}$,
We assume the following conditions on the prior.  
%For positive constants $a_1, a_2,  \ldots, a_{9}$, the prior for $\sigma$ satisfies 
%\begin{eqnarray}  
%	\Pi( \sigma^{-2} \geq s)  &\leq& a_1 \exp \{-a_2 s^{a_3} \} \quad \text{for all sufficiently large} \, \,  s > 0 
%	\label{eq:asnPrior_sigma1}\\
%	\Pi( \sigma^{-2} < s)  &\leq& a_4 s ^{a_5}  \quad \text{for all sufficiently small}  \, \,  s > 0 \label{eq:asnPrior_sigma2}\\
%	\Pi\{ s < \sigma^{-2} < s(1+t) \} &\geq& a_6 s^{a_7} t^{a_8} \exp \{-a_9 s^{1/2}\}, \quad s > 0, \quad t \in (0,1). 
%	\label{eq:asnPrior_sigma3}
%\end{eqnarray}
%An example of a prior that satisfies \eqref{eq:asnPrior_sigma1}-\eqref{eq:asnPrior_sigma2} is the inverse Gamma prior 
%for $\sigma$.  The usual conditionally conjugate inverse Gamma
%prior for $\sigma^2$ satisfies  \eqref{eq:asnPrior_sigma1} and  \eqref{eq:asnPrior_sigma2}, but not 
% \eqref{eq:asnPrior_sigma3}.  \eqref{eq:asnPrior_sigma3} requires the probability to values of $\sigma$ near $0$ to be 
% higher than the corresponding probability for inverse Gamma prior for $\sigma^2$.  
%This assumption is in line with the previous work on adaptive posterior contraction rates for mixture models, see 
%\cite{kruijer2010adaptive,shen2013adaptive}.
Prior for 
$(\alpha_1,\ldots,\alpha_m)$ given $m$ is Dirichlet$(a/m,\ldots,a/m)$, $a > 0$ restricted to the region  $\{\alpha_j > b/m, j=1, \ldots, m\}$ where 
$b > 1$ a fixed constant.  %\begin{equation}
%	\label{eq:asnPrior_m}
%	\Pi(m=i) \propto \exp(-a_{10} i (\log i)^{\tau_1}), i =2, 3, \ldots, \quad a_{10}>0, \tau_1 \geq 0.
%\end{equation}
A priori, $\mu_{j}=(\mu_{j}^y,\mu_{j}^x)$'s are independent from other parameters and across $j$,
and $\mu_{j}^y$ is independent of  $\mu_{j}^x$.
Prior density for $\mu_{j}^x$ is bounded away from 0 on $\mathcal{X}$ and equal to 0 elsewhere.	
Prior density for $\mu_{j}^y$ is bounded below for some $a_1, a_{2}, \tau_1 > 0$ by 
\begin{equation}
	\label{eq:asnPrior_mu_lb}
	a_{1}\exp(-a_{2} ||\mu_j^y||^{\tau_1} ),
\end{equation}
	and for some $a_{3}, \tau_2>0$ and all sufficiently large $r > 0$,
	\begin{equation}
	\label{eq:asnPrior_mu_tail_ub}
	1- \Pi(\mu_{j}^y \in [-r,r]^{}) \leq \exp(-a_{3} r^{\tau_2}). 
\end{equation}
Define 
\begin{eqnarray}\label{eq:modelrate}
\varepsilon_{n,I} = n^{-\frac{\beta}{2\beta + |I| +1}} \, (\log n)^{t}
\end{eqnarray}
with 
$t > t_0 +  \max \{0, (1- \tau_1)/2\}$, 
$t_0=  \{(|I| +1)s + \max\{\tau_1,1,\tau_2/\tau\}) / \{2 + (|I|  +1)/\beta\}$, 
and  $s = 1 + 1/\beta + 1/\tau$. Set $\sigma=\sigma_n = [\varepsilon_{n, I} / \log (1/ \varepsilon_{n, I}) ]^{1/\beta}$ and $m =  \sigma_n^{-|I|} \{\log (1/ \varepsilon_{n, I}) \}^{|I| +|I|/\tau}$.  Extensions to the cases where $\sigma$ and $m$ are assigned prior distributions are not explored in the current paper.  
Similar to  \eqref{eq:GPVSp} in the nonparametric regression case, we assume the DRVS prior with 
\begin{equation}\label{eq:DRVSp}
\begin{aligned}
\pi_I &\propto p^{-|I|}(1-p^{-1})^{p-|I|}\mathbb{I}(|I|\leq d_0) \\
&p(\cdot |\cdot, \theta, m) \, |\, I \sim \Pi^{\mathrm{DR}}_I.
\end{aligned}
\end{equation}
%where $\mathbb{I}(\cdot)$ is the indicator function, $d_0$ is a prespecified hyperparameter, interpreted as the prior belief on the maximum number of important predictors.  
The following theorem shows that a Bayesian procedure under this hierarchical prior is Bayesian model selection consistent.
\btheos[Variable selection consistency for high dimensional density regression]\label{Thm:HDRconsistency}
Assume that the true conditional density $f^*$ satisfies assumption DGP with $\beta > d_0$  and
\begin{enumerate}
\item  for the regular posterior distribution, Assumption DR-B3 holds with
\begin{align*}
\delta_n=M\, \bigg(n^{-\frac{\beta}{2\beta + d^\ast +1}} \, (\log n)^t\wedge \sqrt{\frac{d_0\log p}{n}}\bigg),
\end{align*}
for sufficiently large $M$;

\item for the $\alpha$-fractional posterior distribution, Assumption DR-B3 holds with
\begin{align*}
\delta_n=\frac{M}{\sqrt{\alpha\,(1-\alpha)}}\,\bigg( n^{-\frac{\beta}{2\beta + d^\ast +1}} \, (\log n)^t \wedge \sqrt{\frac{d^\ast \log p}{n}}\bigg),
\end{align*}
\end{enumerate}
with  $t > t_0 +  \max \{0, (1- \tau_1)/2\}$, 
$t_0=  \{(d^\ast +1)s + \max\{\tau_1,1,\tau_2/\tau\}) / \{2 + (d^\ast  +1)/\beta\}$, 
and  $s = 1 + 1/\beta + 1/\tau$.  Then for sufficiently large $M$ independent of $\alpha$,
 under the DRVS prior~\eqref{eq:DRVSp}, it holds that 
\begin{align*}
&\Exs_{f^\ast}^{(n)}[ \Pi_\alpha( I = I^\ast \, |\, X_1,\ldots,X_n) ] \rightarrow 1,\quad \mbox{as $n\to\infty$.}
\end{align*}
\etheos

Since Assumption A3$'$ is satisfied due to Theorem 4.1 of \cite{norets2016}, we derive a rate of contraction theorem under the DRVS prior similar to 
Theorem \ref{Thm:HNR_rate}.  The proof is straightforward from \cite{norets2016} and Theorems \ref{Thm:normalpostMSrate} and \ref{Thm:quasipostMSrate} and hence is omitted.  
\btheos[Contraction rate for high dimensional density regression]\label{Thm:HDR_rate}
Assume that the true conditional density $f^*$ satisfies assumption DGP.   Then under the DRVS prior~\eqref{eq:DRVSp}:
\begin{enumerate}
\item the regular posterior distribution has a contraction rate relative to $d_h$ at least
\begin{align*}
\varepsilon_n=n^{-\frac{\beta}{2\beta + d^\ast+1}} \, (\log n)^t \wedge \sqrt{\frac{d_0 \log p}{n}};
\end{align*}

\item the $\alpha$-fractional posterior distribution has a contraction rate relative to $d_h$ at least
\begin{align*}
\varepsilon_n=\frac{1}{\sqrt{\alpha\,(1-\alpha)}}\, \bigg(n^{-\frac{\beta}{2\beta + d^\ast+1}} \, (\log n)^t \wedge \sqrt{\frac{d^\ast \log p}{n}}\bigg).
\end{align*}
\end{enumerate}
where $t$ is defined in Theorem \ref{Thm:HDRconsistency}.
\etheos
We remark that unlike in the proof of Theorem \ref{Thm:HDRconsistency}, one can relax the assumption of $m$ and $\sigma$ being deterministic sequences depending on the smoothness level in the proof of Theorem \ref{Thm:HDR_rate}.  Also, both parts 1 and 2 of Theorem \ref{Thm:HDR_rate} are valid with a Gamma prior on $\sigma$ and a Poisson prior  on $m$.  In fact, part 2. of Theorem \ref{Thm:HDR_rate}  is  valid with even less restrictive conditions on the tail of the prior on $\sigma$. Refer to  \S 4.2 of \cite{bhattacharya2016bayesian} for an analogous example on density estimation.

\subsection{Proof of Corollary~\ref{Cor::quasirate}}
Choose $\rho(\cdot)=\Pi(\cdot\,|\, B_{n}(\thetastar,\varepsilon_{n}))$ in Theorem~\ref{Thm:quasirate}, we obtain that
\begin{equation}\label{Eqn:oracle_with_Bn}
\begin{aligned}
 \int \frac{1}{n} D^{(n)}_{\alpha}(\theta, \thetastar) \Pi_{\alpha}(d\theta\,|\, X^{(n)}) \le &\,\frac{\alpha}{n (1 - \alpha)} \int_{B_{n}(\thetastar,\varepsilon_{n})} r_n(\theta,\thetastar) \,\rho(d\theta) \\
 & + \Big\{ -\frac{1}{n (1 - \alpha)} \log \Pi(B_{n}(\thetastar,\varepsilon_{n})) \Big\} + \frac{1}{n(1-\alpha)}  \log(1/\varepsilon)
\end{aligned}
\end{equation}
holds with $\Prob^{(n)}_{\theta^\ast}$ probability at least $(1 - \varepsilon)$.

Now we will apply Chebyshev's inequality to obtain a high probability upper bound to the integral $\int_{B_{n}(\thetastar,\varepsilon_{n})} r_n(\theta,\thetastar) \,\rho(d\theta)$.
By applying Fubini's theorem and invoking the definition of $B_n(\theta^\ast,\varepsilon_n)$, we obtain the following bound for its expectation,
\begin{align*}
\bbE_{\theta^\ast}^{(n)}\Big[\int_{B_{n}(\thetastar,\varepsilon_{n})} r_n(\theta,\theta^\ast) \, \rho(d\theta)\Big] = \int_{B_{n}(\thetastar,\varepsilon_{n})}  \bbE_{\theta^\ast}^{(n)}\big[ r_n(\theta,\theta^\ast) \big]\, \rho(d\theta)\leq n\,\varepsilon_n^2.
\end{align*} 
Similarly, by applying the Cauchy-Schwarz inequality and Fubini's theorem, we have the following bound for its variance,
\begin{align*}
\mbox{Var}^{(n)}_{\theta^\ast}\Big[\int_{B_{n}(\thetastar,\varepsilon_{n})} r_n(\theta,\theta^\ast) \, \rho(d\theta)\Big] \leq \int_{B_{n}(\thetastar,\varepsilon_{n})}  \mbox{Var}_{\theta^\ast}^{(n)}\big[ r_n(\theta,\theta^\ast) \big]\, \rho(d\theta)\leq n\,\varepsilon_n^2.
\end{align*} 
Thus, by applying Cauchy-Schwarz inequality, we obtain that for any $D>1$,
\begin{align*}
& \bbP^{(n)}_{\theta^\ast}\bigg( \int_{B_{n}(\thetastar,\varepsilon_{n})} r_n(\theta,\theta^\ast) \, \rho(d\theta) > D\,n\, \varepsilon_n^2 \bigg) \\
& \leq  \bbP^{(n)}_{\theta^\ast}\bigg\{  \int_{B_{n}(\thetastar,\varepsilon_{n})} r_n(\theta,\theta^\ast)\,  \rho(d\theta) - \bbE_{\theta^\ast}^{(n)}\Big[  \int_{B_{n}(\thetastar,\varepsilon_{n})} r_n(\theta,\theta^\ast) \, \rho(d\theta)\Big]> (D-1)n \,\varepsilon_n^2\bigg\}  \\
& \leq \frac{\mbox{Var}^{(n)}_{\theta^\ast}  \big[ \int_{B_{n}(\thetastar,\varepsilon_{n})} r_n(\theta,\theta^\ast)\,  \rho(d\theta)\big]^2 }{(D-1)^2\, n^2 \,\varepsilon_n^4} 
\le \frac{1}{(D-1)^2\, n\, \varepsilon_n^2}.
\end{align*}

Putting all pieces together, we obtain by choosing $\varepsilon=\alpha \, n\,\varepsilon_n^2$ in~\eqref{Eqn:oracle_with_Bn} that with probability at least $1- \frac{1}{(D-1)^2\, n\, \varepsilon _{n}^2} - e^{-\alpha\, n\, \varepsilon _{n}^2} \leq 1 -\frac{2}{(D-1)^2\, n\, \varepsilon _{n}^2}$, it holds that
\begin{align*}
 \int \frac{1}{n}D^{(n)}(\theta, \thetastar)\, \Pi_{\alpha}(d\theta\,|\, X^{(n)}) \le \frac{(D + 1) \alpha}{1 - \alpha}  \varepsilon_n^2  + \Big\{ -\frac{1}{n (1 - \alpha)} \log \Pi(B_{n,}(\thetastar,\varepsilon_{n})) \Big\}.
\end{align*}

\subsection{Proof of Theorem~\ref{Thm:Main}}

\paragraph{Part 1 on regular posteriors:}
Fixed some constant $M>0$ that is sufficiently large. For $\lambda<\lambdastar$, let $\mathcal{F}_{n,\lambda}$ be the sieve sequence in Assumption B4 for $\Theta=\Theta_\lambda$ and $\Pi=\Pi_\lambda$ with $\varepsilon =\delta_n$; and for $\lambda > \lambdastar$, let $\mathcal{F}_{n,\lambda}$ be the sieve sequence in Assumption B4 for $\Theta=\Theta_\lambda$ and $\Pi=\Pi_\lambda$ with $\varepsilon = M\uepsilon_{n,\lambda}$. 

We will make use of the following two lemmas. Proofs of them can be found, for example, in \cite{ghosal2000} or \cite{ghosal2007}.

\blems
\label{Lemma:test}
Under Assumption B4, for each $\lambda \in \Lambda$, there exists constant $b>0$ and a test $\phi_{n,\lambda}$ such that: 
\begin{align*}
&\mbox{For $\lambda <\lambdastar$}:\quad \Prob_{\thetastar}^{(n)} \phi_{n,\lambda} \leq  e^{-b\, n\, \delta_n^2},\quad
\sup_{\theta \in\mathcal{F}_{n,\lambda}:\, d_n(\theta,\thetastar) \geq \delta_n} \Prob_{\theta}^{(n)}(1-\phi_{n,\lambda})\leq e^{-b\,n\,\delta_n^2}; \\
&\mbox{For $\lambda > \lambdastar$}:\quad \Prob_{\thetastar}^{(n)} \phi_{n,\lambda} \leq e^{-b\, M^2\, n\, \uepsilon_{n,\lambda}^2},\quad
\sup_{\theta \in\mathcal{F}_{n,\lambda}:\, d_n(\theta,\thetastar) \geq M\,\uepsilon_{n,\lambda}} \Prob_{\theta}^{(n)}(1-\phi_{n,\lambda})\leq e^{-b\,M^2\,n\,\uepsilon_{n,\lambda}^2}.
\end{align*}
\elems

\blems
\label{Lemma:denominator}
For every $\varepsilon>0$, we have, for any $C>0$,
\[
\Prob_{\thetastar}^{(n)}\bigg(\int_{B_n(\thetastar,\, \varepsilon))}\frac{p^{(n)}_{\theta}(X^{(n)})}{p^{(n)}_{\thetastar}(X^{(n)})}\, \Pi_{\lambdastar}(d\theta)\leq\, e^{-(1+C)\, n\, \varepsilon^2}\, \Pi_\lambdastar (B_n(\thetastar,\, \varepsilon)) \bigg)
\leq\frac{1}{C^2\, n\, \varepsilon^2}.
\]
\elems

Let test $\phi_n$ defined by $\phi_n = \max_\lambda \phi_{n,\lambda}$,
where $\phi_{n,\lambda}$ are the test functions given by Lemma~\ref{Lemma:test}.
Then we have the following decomposition:
\begin{align}
&1- \Prob_{\thetastar}^{(n)}\big[\Pi(\MODEL_{\lambdastar}\, | \, X_1,\ldots,X_n)\big]=
\Prob_{\thetastar}^{(n)}\big[\sum_{\lambda\neq \lambdastar} \Pi(\MODEL_\lambda\, | \, X_1,\ldots,X_n)\big]\notag\\
 = &\, \Prob_{\thetastar}^{(n)}\big[\sum_{\lambda\neq \lambdastar} \Pi(\MODEL_\lambda\, | \, X_1,\ldots,X_n)\, \phi_n\big] + \Prob_{\thetastar}^{(n)}\big[\sum_{\lambda\neq \lambdastar} \Pi(\MODEL_\lambda\, | \, X_1,\ldots,X_n)\, (1-\phi_n)\big]. \label{Eqn:goal}
\end{align}
We will bound the two terms on the r.h.s.~separately.

\

Applying Lemma~\ref{Lemma:test}, we can bound the first term as
\begin{equation}
\label{Eqn:term1}
\begin{aligned}
&\Prob_{\thetastar}^{(n)}\big[\sum_{\lambda\neq \lambdastar} \Pi(\MODEL_\lambda\, | \, X_1,\ldots,X_n)\, \phi_n\big] \\
&\leq \Prob_{\thetastar}^{(n)}\big[ \phi_n\big] \leq \sum_{\lambda\neq \lambda^\ast} \Prob_{\thetastar}^{(n)}\big[ \phi_{n,\lambda}\big]  \leq \sum_{\lambda <\lambdastar} e^{-b\, n\,\delta_n^2} +  \sum_{\lambda >\lambdastar} e^{-bM^2\, n\,\varepsilon_{n,\lambda}^2} \to 0,
\end{aligned}
\end{equation}
as $n\to\infty$, where we used Assumption B4 in the last step. 

Now we focus on the second term. Let $A_n$ denote the events in Lemma~\ref{Lemma:denominator} with $\varepsilon=\uepsilon_{n,\lambdastar}$. 
Then, we have the following decomposition 
\begin{align}
&\Prob_{\thetastar}^{(n)}\big[\sum_{\lambda\neq \lambdastar} \Pi(\MODEL_\lambda\, | \, X_1,\ldots,X_n)\, (1-\phi_n)\big] \notag\\
=&\, \Prob_{\thetastar}^{(n)}\big[\sum_{\lambda\neq \lambdastar} \Pi(\MODEL_\lambda\, | \, X_1,\ldots,X_n)\, (1-\phi_n)\, I(A_n^c) \big] + \Prob_{\thetastar}^{(n)}\big[\sum_{\lambda\neq \lambdastar} \Pi(\MODEL_\lambda\, | \, X_1,\ldots,X_n)\, (1-\phi_n)\, I(A_n) \big] \notag\\
=&\, \Prob_{\thetastar}^{(n)}\big[\sum_{\lambda\neq \lambdastar} \Pi(\MODEL_\lambda\, | \, X_1,\ldots,X_n)\, (1-\phi_n)\, I(A_n^c) \big]  + \Prob_{\thetastar}^{(n)}\big[\sum_{\lambda\neq \lambdastar} \Pi(\mathcal{F}_{n,\lambda}^c, \,\MODEL_\lambda\, | \, X_1,\ldots,X_n)\, (1-\phi_n)\, I(A_n) \big] \notag\\
&\, + \Prob_{\thetastar}^{(n)}\big[\sum_{\lambda> \lambdastar} \Pi( \{d_n(\theta,\,\thetastar) \geq M\, \uepsilon_{n,\lambda}\} \cap \mathcal{F}_{n,\lambda}, \MODEL_\lambda\, | \, X_1,\ldots,X_n)\, (1-\phi_n)\, I(A_n) \big] \notag\\
&\, +  \Prob_{\thetastar}^{(n)}\big[\sum_{\lambda> \lambdastar} \Pi( \{d_n(\theta,\,\thetastar) < M\, \uepsilon_{n,\lambda}\} \cap \mathcal{F}_{n,\lambda}, \MODEL_\lambda\, | \, X_1,\ldots,X_n)\, (1-\phi_n)\, I(A_n) \big] \notag\\
&\, + \Prob_{\thetastar}^{(n)}\big[\sum_{\lambda< \lambdastar} \Pi( \{d_n(\theta,\,\thetastar) \geq \delta_n\} \cap \mathcal{F}_{n,\lambda}, \MODEL_\lambda\, | \, X_1,\ldots,X_n)\, (1-\phi_n)\, I(A_n) \big] \notag\\
&\, +  \Prob_{\thetastar}^{(n)}\big[\sum_{\lambda< \lambdastar} \Pi( \{d_n(\theta,\,\thetastar) <\delta_n\} \cap \mathcal{F}_{n,\lambda}, \MODEL_\lambda\, | \, X_1,\ldots,X_n)\, (1-\phi_n)\, I(A_n) \big] \notag\\
:\,=&\, S_1 + S_2 +S_3+S_4+S_5+S_6.  \label{Eqn:term2}
\end{align}
Now we bound $S_1$-$S_6$ respectively:

\paragraph{Term $S_1$:} 
Since $\sum_{\lambda\neq \lambdastar} \Pi(\MODEL_\lambda\, | \, X_1,\ldots,X_n)\, (1-\phi_n) \leq 1$, we have
\begin{align*}
S_1 \leq \Prob_{\thetastar}^{(n)} (A_n^c) \leq \frac{1}{C^2\,n\,\uepsilon_{n,\lambdastar}^2} \to 0, \quad\mbox{as $n\to\infty$},
\end{align*}
by applying Lemma~\ref{Lemma:denominator}.

\paragraph{Term $S_2$:}
By Bayes' rule, for each $\lambda \neq\lambdastar$, we have
\begin{align}
\Pi(\mathcal{F}_{n,\lambda}^c,\, \MODEL_\lambda\, | \, X_1,\ldots,X_n)&= \frac{\pi_\lambda \int_{\mathcal{F}_{n,\lambda}^c}\frac{p^{(n)}_{\theta}X^{(n)})}{p^{(n)}_{\thetastar}(X^{(n)})}\, \Pi_\lambda(d\theta)}{\sum_{\lambda\in\Lambda} \pi_\lambda \int\frac{p^{(n)}_{\theta}X^{(n)})}{p^{(n)}_{\thetastar}(X^{(n)})}\, \Pi_\lambda(d\theta)} \notag\\
&\leq \frac{\pi_\lambda\,\int_{\mathcal{F}_{n,\lambda}^c}\frac{p^{(n)}_{\theta}X^{(n)})}{p^{(n)}_{\thetastar}(X^{(n)})}\, \Pi_\lambda(d\theta)}{\pi_{\lambdastar}\int\frac{p^{(n)}_{\theta}X^{(n)})}{p^{(n)}_{\thetastar}(X^{(n)})}\, \Pi_{\lambdastar}(d\theta)} \label{Eqn:S2a}
\end{align}
Applying Lemma~\ref{Lemma:denominator} and using Assumption B1, under the event $A_n$ the denominator on the r.h.s~can be bounded as
\begin{align}
\pi_{\lambdastar}\int\frac{p^{(n)}_{\theta}X^{(n)})}{p^{(n)}_{\thetastar}(X^{(n)})} (X_i)\, \Pi_{\lambdastar}(d\theta) \geq e^{-\, n\, \uepsilon_{n,\lambdastar}^2}\, e^{-(1+C)\, n\, \uepsilon_{n,\lambdastar}^2} e^{-c\,n\, \uepsilon_{n,\lambdastar}^2} = e^{-(2+C +c)\, n\, \uepsilon_{n,\lambdastar}^2}. \label{Eqn:denominator}
\end{align}
Combining inequalities~\eqref{Eqn:S2a} and  \eqref{Eqn:denominator}, we have
\begin{equation}
\label{Eqn:S2bound}
\begin{aligned}
S_2 & \leq e^{(2+C +c)\, n\, \uepsilon_{n,\lambdastar}^2} \,  \sum_{\lambda\neq \lambdastar} \pi_\lambda\,  \Prob_{\thetastar}^{(n)} \big[\int_{\mathcal{F}_{n,\lambda}^c}\frac{p^{(n)}_{\theta}X^{(n)})}{p^{(n)}_{\thetastar}(X^{(n)})}\, \Pi_\lambda(d\theta)\big]\\
&\overset{(i)}{=} e^{(2+C +c)\, n\, \uepsilon_{n,\lambdastar}^2} \,  \Big\{\sum_{\lambda< \lambdastar} \pi_\lambda\,  \Pi_\lambda(\mathcal{F}_{n,\lambda}^c)+ \sum_{\lambda> \lambdastar} \pi_\lambda\,  \Pi_\lambda(\mathcal{F}_{n,\lambda}^c)\Big\}\\
&\overset{(ii)}{\leq} e^{(2+C +c)\, n\, \uepsilon_{n,\lambdastar}^2} \,  \Big\{\sum_{\lambda< \lambdastar} \pi_\lambda\,  e^{-D\, n\,\delta_n^2} + \sum_{\lambda> \lambdastar} \pi_\lambda\,  e^{-D\,M^2\,n\,\uepsilon_{n,\lambda}^2}\Big\}\\
&\overset{(iii)}{\leq}  e^{(2+C +c -D\,M^2/2 )\, n\, \uepsilon_{n,\lambdastar}^2} \to 0, \quad\mbox{as $n\to\infty$}
\end{aligned}
\end{equation}
when $M$ is sufficiently large. Here in step (i) we applied Fubini's theorem, in step (ii) we used our definition of $\mathcal{F}_{n,\lambda}$ and Assumption B4, and step (iii) follows since by Assumptions B2 and B3, $\uepsilon_{n,\lambda} \geq \uepsilon_{n,\lambdastar}$ for $\lambda>\lambdastar$ and $\delta_n\geq M\varepsilon_{n,\lambdastar}$, and by Assumption B4, we have $\sum_{\lambda< \lambdastar} e^{-C\, n\,\delta_n^2} + \sum_{\lambda>\lambda} e^{-C\, n\,\varepsilon_{n,\lambda}^2} \leq 1$ with $C\geq D\, M^2/2$.

\paragraph{Term $S_3$:} 
Using Bayes' rule and inequality~\eqref{Eqn:denominator}, we can bound $S_3$ as (similar to the first two steps in the series of inequalities in~\eqref{Eqn:S2bound})
\begin{align*}
S_3 &\leq e^{(2+C +c)\, n\, \uepsilon_{n,\lambdastar}^2}   \sum_{\lambda> \lambdastar} \pi_\lambda\,  \Prob_{\thetastar}^{(n)} \big[(1-\phi_n) \,\int_{\{d_n(\theta,\,\thetastar) > M\, \uepsilon_{n,\lambda}\}\cap \mathcal{F}_{n,\lambda}}\frac{p^{(n)}_{\theta}X^{(n)})}{p^{(n)}_{\thetastar}(X^{(n)})}\, \Pi_\lambda(d\theta)\big]\\
&\leq e^{(2+C +c)\, n\, \uepsilon_{n,\lambdastar}^2}   \sum_{\lambda> \lambdastar} \pi_\lambda\,  \sup_{\theta\in \mathcal{F}_{n,\lambda}:\, d_n(\theta,\,\thetastar) > M\, \uepsilon_{n,\lambda}} \Prob_{\thetastar}^{(n)} \big[(1-\phi_n) \big]\\
&\overset{(i)}{\leq}  e^{(2+C +c)\, n\, \uepsilon_{n,\lambdastar}^2} \sup_{\lambda>\lambdastar} e^{-b\,M^2\,n\,\uepsilon_{n,\lambda}^2} \\
&\overset{(ii)}{\leq} Ce^{(2+C +c -bM^2)\, n\, \uepsilon_{n,\lambdastar}^2} \to 0, \quad\mbox{as $n\to\infty$},
\end{align*}
for sufficiently large $M$.
Here in step (i) we used Lemma~\ref{Lemma:test} and in step (ii) we used Assumption B2 that $\varepsilon_{n,\lambda}\geq \varepsilon_{n,\lambda}$ for $\lambda>\lambdastar$.

\paragraph{Term $S_4$:}
Similar as before, using Bayes' rule and inequality~\eqref{Eqn:denominator}, we can bound $S_4$ as
\begin{align*}
S_4& \leq e^{(2+C +c)\, n\, \uepsilon_{n,\lambdastar}^2}  \sum_{\lambda>\lambdastar} \pi_\lambda\,  \Pi_\lambda(d_n(\theta,\,\thetastar) < M\, \uepsilon_{n,\lambda})\\
& \overset{(i)}{\leq} e^{(2+C +c - H )\, n\, \uepsilon_{n,\lambdastar}^2} \to 0, \quad\mbox{as $n\to\infty$},
\end{align*}
for sufficiently large $H$.
Here in step (i) we used Assumption B3.

\paragraph{Term $S_5$:} 
Similar to the proof of the bound for term $S_3$, we have 
\begin{align*}
S_5 &\leq e^{(2+C +c)\, n\, \uepsilon_{n,\lambdastar}^2}   \sum_{\lambda< \lambdastar} \pi_\lambda\,  \sup_{\theta\in \mathcal{F}_{n,\lambda}:\, d_n(\theta,\,\thetastar) >  \delta_n} \Prob_{\thetastar}^{(n)} \big[(1-\phi_n) \big]\\
&\overset{(i)}{\leq} e^{(2+C +c)\, n\, \uepsilon_{n,\lambdastar}^2} e^{-b\,n\,\delta_n^2} \\
&\overset{(ii)}{\leq} e^{(2+C +c - bM^2)\, n\, \uepsilon_{n,\lambdastar}^2} \to 0, \quad\mbox{as $n\to\infty$},
\end{align*}
for sufficiently large $M$.
Here in step (i) we used Lemma~\ref{Lemma:test}, and in step (ii) we used Assumption B3 that $\delta_n\geq M \varepsilon_{n,\lambdastar}$.

\paragraph{Term $S_6$:} By Assumption B3, this term is zero.

\

\noindent Combining above bounds for terms $S_1$-$S_6$, inequality~\eqref{Eqn:term1}, and  equations~\eqref{Eqn:goal} and \eqref{Eqn:term2}, we can prove
$$
\Exs_{\thetastar}^{(n)}[ \Pi( \lambda=\lambdastar \, |\, X^{(n)}) ] \rightarrow 1,\quad\mbox{as $n\to\infty$}.$$

\

\paragraph{Part 2 on fractional posteriors:}
Similar to Lemma~\ref{Lemma:denominator} for regular posterior distributions, we have the following lemma for fractional posterior distributions.
\blems
\label{Lemma:denominator_b}
For every $\varepsilon>0$, we have, for any $C>0$,
\[
\Prob_{\thetastar}^{(n)}\bigg(\int_{B_n(\thetastar,\, \varepsilon))}\Big(\frac{p^{(n)}_{\theta}(X^{(n)})}{p^{(n)}_{\thetastar}(X^{(n)})}\Big)^{\alpha}\, \Pi_{\lambdastar}(d\theta)\leq\, e^{-\alpha\, (1+C)\, n\, \varepsilon^2}\, \Pi_\lambdastar (B_n(\thetastar,\, \varepsilon)) \bigg)
\leq\frac{1}{C^2\, n\, \varepsilon^2}.
\]
\elems

\noindent Let $A_n$ denote the complement of the event inside the bracket of the inequality in Lemma~\ref{Lemma:denominator_b} with $\varepsilon=\varepsilon_{n,\lambda^\ast}$. Then we have $\bbP_{\theta^\ast}^{(n)}(A_n) \geq 1-1/(C^2\,n\,\varepsilon_{n,\lambda^\ast}^2)$.

Note that we have the following decomposition 
\begin{align}
&\Prob_{\thetastar}^{(n)}\big[\sum_{\lambda\neq \lambdastar} \Pi_\alpha(\MODEL_\lambda\, | \, X_1,\ldots,X_n)\big] \notag\\
=&\, \Prob_{\thetastar}^{(n)}\big[\sum_{\lambda\neq \lambdastar} \Pi_\alpha(\MODEL_\lambda\, | \, X_1,\ldots,X_n)\, I(A_n^c)\big]\notag \\
 &\, +\Prob_{\thetastar}^{(n)}\big[\sum_{\lambda> \lambdastar} \Pi_\alpha( \{D^{(n)}_\alpha (\theta,\,\thetastar) \geq M\, n\,\uepsilon^2_{n,\lambda}\}, \, \MODEL_\lambda\, | \, X_1,\ldots,X_n)\, I(A_n) \big] \notag\\
&\, +  \Prob_{\thetastar}^{(n)}\big[\sum_{\lambda> \lambdastar} \Pi_\alpha( \{D^{(n)}_\alpha (\theta,\,\thetastar) < M\, n\,\uepsilon^2_{n,\lambda}\},\,  \MODEL_\lambda\, | \, X_1,\ldots,X_n)\, I(A_n) \big] \notag\\
&\, + \Prob_{\thetastar}^{(n)}\big[\sum_{\lambda< \lambdastar} \Pi_\alpha( \{D^{(n)}_\alpha (\theta,\,\thetastar) \geq n\,\delta^2_n\},\, \MODEL_\lambda\, | \, X_1,\ldots,X_n)\, I(A_n) \big] \notag\\
&\, +  \Prob_{\thetastar}^{(n)}\big[\sum_{\lambda< \lambdastar} \Pi_\alpha( \{D^{(n)}_\alpha (\theta,\,\thetastar) <n\,\delta^2_n\},\, \MODEL_\lambda\, | \, X_1,\ldots,X_n)\, I(A_n) \big] \notag\\
:\,=&\, S_1 + S_2 +S_3+S_4+S_5.  \label{Eqn:decomposition_frac}
\end{align}
Now we bound $S_1$-$S_5$ respectively:

\paragraph{Term $S_1$:} 
Since $\sum_{\lambda\neq \lambdastar} \Pi_\alpha(\MODEL_\lambda\, | \, X_1,\ldots,X_n) \leq 1$, we have
\begin{align*}
S_1 \leq \Prob_{\thetastar}^{(n)} (A_n^c) \leq \frac{1}{C^2\,n\,\uepsilon_{n,\lambdastar}^2} \to 0, \quad\mbox{as $n\to\infty$},
\end{align*}
by applying Lemma~\ref{Lemma:denominator_b}.

\paragraph{Term $S_2$:}
By Bayes' rule, for each $\lambda>\lambdastar$, we have
\begin{align}
\Pi_\alpha( \{D^{(n)}_\alpha (\theta,\,\thetastar) \geq M\, \uepsilon_{n,\lambda}\},\, \MODEL_\lambda\, | \, X_1,\ldots,X_n)&= \frac{\pi_\lambda \int_{ D^{(n)}_\alpha (\theta,\,\thetastar) \geq M\, n\,\uepsilon_{n,\lambda}^2}\Big(\frac{p^{(n)}_{\theta}X^{(n)})}{p^{(n)}_{\thetastar}(X^{(n)})}\Big)^\alpha\, \Pi_\lambda(d\theta)}{\sum_{\lambda\in\Lambda} \pi_\lambda \int\Big(\frac{p^{(n)}_{\theta}X^{(n)})}{p^{(n)}_{\thetastar}(X^{(n)})}\Big)^\alpha\, \Pi_\lambda(d\theta)} \notag\\
&\leq \frac{\pi_\lambda\,\int_{ D^{(n)}_\alpha (\theta,\,\thetastar) \geq M\,n\, \uepsilon^2_{n,\lambda}}\Big(\frac{p^{(n)}_{\theta}X^{(n)})}{p^{(n)}_{\thetastar}(X^{(n)})}\Big)^\alpha\, \Pi_\lambda(d\theta)}{\pi_{\lambdastar}\int\Big(\frac{p^{(n)}_{\theta}X^{(n)})}{p^{(n)}_{\thetastar}(X^{(n)})}\Big)^\alpha\, \Pi_{\lambdastar}(d\theta)} \label{Eqn:S2a_quasi}
\end{align}
Using the definition of the $\alpha$-divergence $D^{(n)}_\alpha$, we have that for any $\theta$,
\begin{align}\label{Eqn:quasi_key}
\bbE_{\theta^\ast}^{(n)} \Big[\Big(\frac{p^{(n)}_{\theta}(X^{(n)})}{p^{(n)}_{\thetastar}(X^{(n)})}\Big)^{\alpha}\Big] = A^{(n)}_{\alpha}(\theta,\theta^\ast) =\exp\big\{- (1-\alpha)\, D^{(n)}_\alpha(\theta,\theta^\ast)\big\}.
\end{align}

Applying Lemma~\ref{Lemma:denominator_b} and using Assumptions A2 and B3, under the event $A_n$ the denominator on the r.h.s~can be bounded as
\begin{align}
\pi_{\lambdastar}\int\Big(\frac{p^{(n)}_{\theta}X^{(n)})}{p^{(n)}_{\thetastar}(X^{(n)})}\Big)^\alpha\, \Pi_{\lambdastar}(d\theta) \geq e^{-\, n\, \uepsilon_{n,\lambdastar}^2}\, e^{-\alpha\, (1+C)\, n\, \uepsilon_{n,\lambdastar}^2} e^{-c\,n\, \uepsilon_{n,\lambdastar}^2} = e^{-(1+\alpha+\alpha\, C +c)\, n\, \uepsilon_{n,\lambdastar}^2}. \label{Eqn:denominator_quasi}
\end{align}
Combining inequalities~\eqref{Eqn:S2a_quasi}, \eqref{Eqn:quasi_key} and  \eqref{Eqn:denominator_quasi}, we have
\begin{equation}
\label{Eqn:S2bound_frac}
\begin{aligned}
S_2 & \leq e^{(1+\alpha+\alpha\, C +c)\, n\, \uepsilon_{n,\lambdastar}^2} \,  \sum_{\lambda > \lambdastar} \pi_\lambda\,  \Prob_{\thetastar}^{(n)} \big[\int_{D^{(n)}_\alpha (\theta,\,\thetastar) \geq M\, n\,\uepsilon^2_{n,\lambda}}\Big(\frac{p^{(n)}_{\theta}X^{(n)})}{p^{(n)}_{\thetastar}(X^{(n)})}\Big)^\alpha\, \Pi_\lambda(d\theta)\big]\\
&\leq e^{(1+\alpha+\alpha\, C +c)\, n\, \uepsilon_{n,\lambdastar}^2} \,  \sum_{\lambda>\lambdastar} \pi_\lambda\,e^{-n(1-\alpha) M\,\varepsilon_{n,\lambda}^2}\\
&\leq e^{(1+\alpha+\alpha\, C +c -(1-\alpha)\,M )\, n\, \uepsilon_{n,\lambdastar}^2} \to 0, \quad\mbox{as $n\to\infty$}
\end{aligned}
\end{equation}
when $M$ is sufficiently large.

\paragraph{Term $S_3$:}
Similar to the previous step of bounding $S_2$, using Bayes' rule and inequality~\eqref{Eqn:denominator_quasi}, we can bound $S_3$ as
\begin{align*}
S_3& \leq e^{(1+\alpha+\alpha\, C +c)\, n\, \uepsilon_{n,\lambdastar}^2}  \sum_{\lambda>\lambdastar} \pi_\lambda\,  \Pi_\lambda(D^{(n)}_\alpha(\theta,\,\thetastar) < M\,n\, \uepsilon_{n,\lambda}^2)\\
& \overset{(i)}{\leq}C\, e^{(1+\alpha+\alpha\, C +c - H )\, n\, \uepsilon_{n,\lambdastar}^2} \to 0, \quad\mbox{as $n\to\infty$},
\end{align*}
for sufficiently large $H$.
Here in step (i) we used Assumptions B2 and B3.

\paragraph{Term $S_4$:} 
Similar to the proof of the bound for term $S_2$, we have 
\begin{align*}
S_4 &\leq e^{(1+\alpha+\alpha\, C +c )\, n\, \uepsilon_{n,\lambdastar}^2}   \sum_{\lambda< \lambdastar} \pi_\lambda\,  e^{-n(1-\alpha) \,\delta_n^2} \\
&\leq e^{(1+\alpha+\alpha\, C +c  - (1-\alpha) \, M)\, n\, \uepsilon_{n,\lambdastar}^2} \to 0, \quad\mbox{as $n\to\infty$},
\end{align*}
for sufficiently large $M$.
Here in the last step we used Assumption B3.

\paragraph{Term $S_5$:} By Assumption B2,  this term is zero.

\

\noindent Combining above bounds for terms $S_1$-$S_5$ and \eqref{Eqn:decomposition_frac} yields a proof for
$$
\Exs_{\thetastar}^{(n)}[ \Pi_\alpha( \lambda=\lambdastar \, |\, X^{(n)}) ] \rightarrow 1,\quad\mbox{as $n\to\infty$}.$$

\subsection{Proof of Theorem~\ref{Thm:normalpostMSrate}}
For simplicity, we let
\begin{align*}
\varepsilon_{n}^2 :\,= \min_{\lambda\in\Lambda} \, \Big\{ \min_{\varepsilon_\lambda>0}\Big\{\varepsilon^2_{\lambda} + \Big(-\frac{1}{n}\log \Pi_\lambda(B_n(\theta^\ast,\,\varepsilon_\lambda))\Big)\Big\} + \Big(-\frac{1}{n}\log \pi_\lambda\Big)\Big\} + \frac{\log |\Lambda|}{n},
\end{align*}
and let $\lambda^\ast$ be the index under which the outside minimum in the display achieves, and $\varepsilon_{n,\lambda^\ast}$ be the $\varepsilon_\lambdastar$ that achieves the inner minimum under $\lambda=\lambda^\ast$.
According to Assumption A3$'$, for any $\varepsilon>0$ and any $\lambda$, there exists a sequence of sieves $\mathcal{F}_{n,\lambda}$ such that
\begin{align*}
\log N(\varepsilon, \, \mathcal{F}_{n,\lambda}, \, d_n) \leq n\,\varepsilon^2
\qquad\mbox{and}\qquad \Pi_\lambda(\mathcal{F}_{n,\lambda}^c) \leq e^{-D\,n\,\varepsilon^2}.
\end{align*}
We can use this to construct a sieve sequence $\{\mathcal{F}_{n}\}$ of the entire parameter space $\Theta=\bigcup_{\lambda\in\Lambda} \Theta_\lambda$ as
\begin{align*}
\mathcal{F}_{n} :\,=\bigcup_{\lambda}  \mathcal{F}_{n,\lambda},
\end{align*}
which satisfies
\begin{align*}
\log N(\varepsilon, \, \mathcal{F}_{n}, \, d_n) & \leq \log\Big(\sum_{\lambda} N(\varepsilon, \, \mathcal{F}_{n,\lambda}, \, d_n) \Big) \leq  n\,\varepsilon^2 + \log |\Lambda|, \mbox{\ \ and}\\
\Pi_\lambda(\mathcal{F}_{n}^c) & \leq  \inf_{\lambda\in\Lambda} \Pi_\lambda(\mathcal{F}_{n,\lambda}^c) \leq e^{-D\,n\,\varepsilon^2}.
\end{align*}
Therefore, there exists some sufficiently large constant $c$ such that for $\varepsilon= c\, \varepsilon_{n}$, we have
\begin{equation}\label{Eqn:NewA3}
\begin{aligned}
\log N(\varepsilon, \, \mathcal{F}_{n}, \, d_n) & \leq \log\Big(\sum_{\lambda} N(\varepsilon, \, \mathcal{F}_{n,\lambda}, \, d_n) \Big) \leq  2\,n\,\varepsilon^2, \mbox{\ \ and}\\
\Pi_\lambda(\mathcal{F}_{n}^c) & \leq \sum_{\lambda}  \Pi_\lambda(\mathcal{F}_{n,\lambda}^c) \leq e^{-\frac{1}{2}\, D\,n\,\varepsilon^2}.
\end{aligned}
\end{equation}
This can be considered as Assumption A3 with $\Theta$ as the parameter space and $\Pi=\sum_{\lambda} \pi_\lambda\, \Pi_\lambda$ as the prior over $\Theta$. Similarly, if we let
\begin{align*}
B_{n}(\thetastar,\varepsilon)=\{\theta\in\Theta:\, D(p_{\thetastar}^{(n)}, p_{\theta}^{(n)})\leq n\,\varepsilon^2,\, V(p_{\theta_0}^{(n)}, p_{\theta}^{(n)})\leq n\,\varepsilon^2\}, \quad \varepsilon>0.
\end{align*}
Then by the definition of the $\varepsilon_n$, we have
\begin{align}\label{Eqn:NewA2}
\Pi (B_{n}(\theta^\ast,\varepsilon_{n})) \geq \pi_{\lambdastar}\,\Pi_\lambdastar (B_n(\theta^\ast,\uepsilon_{n})) \geq \pi_{\lambdastar}\,\Pi_\lambdastar (B_n(\theta^\ast,\uepsilon_{n,\lambda^\ast}))  \geq e^{-n \, \varepsilon_{n}^2}.
\end{align}
This can be considered as Assumption A2 with $\Theta$ and $\Pi$. Now under the condition of the theorem, Assumption A1 applies for all $\theta_1\in\Theta_\lambda$ and any $\lambda \in\Lambda$, so it also applies for all $\theta_1\in \Theta$, which is Assumption A1 with $\Theta$ and $\Pi$. Now, we apply Theorem~\ref{Thm:rate} for $\Theta$ as the parameter space and $\Pi$ as the posterior distribution, we obtain that for sufficiently large $M$, it holds 
\begin{align*}
&\Exs_{\thetastar}^{(n)}[ \Pi( d_n(\theta, \,\thetastar) > M \, \varepsilon_{n}\, |\, X^{(n)}) ] \rightarrow 0,\quad \mbox{as $n\to\infty$.}
\end{align*}

\subsection{Proof of Theorem~\ref{Thm:quasipostMSrate}}
Similar to the proof of Theorem~\ref{Thm:normalpostMSrate}, we let
\begin{align*}
\varepsilon_{n}^2 :\,= \min_{\lambda\in\Lambda} \, \Big\{ \min_{\varepsilon_\lambda>0}\Big\{(\varepsilon^2_{\lambda} + \Big(-\frac{1}{n}\log \Pi_\lambda(B_n(\theta^\ast,\,\varepsilon_\lambda))\Big)\Big\} + \Big(-\frac{1}{n}\log \pi_\lambda\Big)\Big\},
\end{align*}
and define $\lambda^\ast$ and $\varepsilon_{n,\lambda^\ast}$ accordingly.
Let us write
\begin{align*}
U_n:\,= \Big\{\theta\in\Theta:\, D^{(n)}_\alpha(\theta,\,\theta^\ast) \geq \frac{M}{1-\alpha}\, n\,\varepsilon_n^2\Big\}.
\end{align*}
Then, we can express the desired posterior probability as
\begin{align}
\Pi_{\alpha} \Big(D^{(n)}_{\alpha}(\theta,\,\theta^\ast) \geq \frac{M}{1-\alpha}\, n\,\varepsilon_n^2\ \Big| \, X^{(n)}\Big)  =  \frac{\int_{U_n} e^{-\alpha\, r_{n}(\theta,\,\theta^\ast)} \,\Pi(d\theta)}{\int_{\Theta} e^{-\alpha\, r_{n}(\theta,\,\theta^\ast)}\, \Pi(d\theta)},\label{Eq:post}
\end{align} 
where recall that $r_{n}(\theta,\,\theta^\ast)$ is the negative log-likelihood ratio between $\theta$ and $\thetastar$.

Let us first consider the numerator.
By the definition of the $\alpha$-divergence $D^{(n)}_\alpha(\theta,\,\theta^\ast)$, we have 
\begin{align}\label{eq:key_div}
\bbE^{(n)}_{\thetastar} e^{-\alpha\, r_{n}(\theta,\,\theta^\ast)}  = A^{(n)}_{\alpha}(\theta, \theta^\ast) = e^{-(1 - \alpha)\,  D^{(n)}_{\alpha}(\theta,\,\theta^\ast)}.
\end{align}
Now integrating both side with respect to the prior $\Pi$ over $U_n$ and applying Fubini's theorem, we can get
\begin{align*}
\bbE^{(n)}_{\thetastar} \int_{U_n} e^{-\alpha\, r_{n}(\theta,\,\theta^\ast)} \Pi(d\theta)  = \int_{U_n}  e^{-(1 - \alpha)\, D^{(n)}_{\alpha}(\theta,\,\theta^\ast)} \Pi(d\theta) \leq e^{-M\, n \, \varepsilon_n^2},
\end{align*}
where the last step follows from the definition of $U_n$.
An application of the Markov inequality yields the following high probability bound for the numerator on the right hand side of~\eqref{Eq:post},
\begin{align} \label{Eqn:NumBound}
\bbP^{(n)}_{\thetastar}\Big[\int_{U_n} e^{-r_{n, \alpha}(f)} \Pi(df)  \geq e^{-M\, n \, \varepsilon_n^2/2} \Big] \leq e^{-M\, n \, \varepsilon_n^2/2} \leq \frac{4}{M^2 \,n\,\varepsilon_n^2}.
\end{align}

Next, we consider the denominator on the right hand side of~\eqref{Eq:post}. We always have the lower bound
\begin{align*}
\int_{\Theta} e^{-\alpha\, r_{n}(\theta,\,\theta^\ast)} \Pi(d\theta) \geq \int_{B_{n}(\theta^\ast,\varepsilon_n;)} e^{-\alpha\, r_{n}(\theta,\,\theta^\ast)} \Pi(d\theta).
\end{align*}
Since the prior concentration bound~\eqref{Eqn:NewA2} holds, we invoke Lemma~\ref{Lemma:denominator_b} to obtain
for any $D>1$, we have
\begin{align} \label{Eqn:DenBound}
\bbP^{(n)}_{\thetastar}\Big[\int_{B_{n}(\theta^\ast,\varepsilon_n)} e^{-\alpha\, r_{n}(\theta,\,\theta^\ast)} \Pi(d\theta) \leq e^{-\alpha \, D \,n\,\varepsilon_n^2}\Big] \leq \frac{1}{(D-1)^2n\,\varepsilon_n^2}.
\end{align}

Now combining \eqref{Eq:post}, \eqref{Eqn:NumBound} and ~\eqref{Eqn:DenBound}, we obtain that with probability at least $1 - 2/\{(M/4-1)^2 n \varepsilon_n^2\} \to 1$,
\begin{align*}
\Pi_{\alpha} \Big(D^{(n)}_{\alpha}(\theta,\,\theta^\ast) \geq \frac{M}{1-\alpha}\, n\,\varepsilon_n^2\ \Big| \, X^{(n)}\Big)
\leq  e^{-M\, n \, \varepsilon_n^2/2}  e^{M\, n \, \varepsilon_n^2/4} =  e^{-M\, n \, \varepsilon_n^2/4}\to 0,\quad\mbox{as $n\to\infty$}.
\end{align*}

\subsection{Proof of Theorem~\ref{Thm:quasipostMS_Oracle}}
Define the same $\varepsilon_{n}$, $\lambda^\ast$ and $\varepsilon_{n,\lambda^\ast}$ as in the proof of Theorem~\ref{Thm:quasipostMSrate}.
Then we have
\begin{align}\label{Eqn:quasiPriorConcen}
\Pi \big(\{\theta\in B_n(\thetastar,\varepsilon_{n,\lambdastar})\}\cap\{\lambda=\lambda^\ast\}\big) \geq \pi_{\lambdastar}\,\Pi_\lambdastar (B_n(\theta^\ast,\uepsilon_{n,\lambda^\ast})) \geq e^{-n\,\varepsilon_{n}^2}.
\end{align}
Now apply Theorem~\ref{Thm:PAC-Bayes-one-model} with $\Theta$ as the parameter space and 
$$\rho(\cdot)=\Pi\big(\cdot\,\big|\, \{\theta\in B_n(\thetastar,\varepsilon_{n,\lambdastar})\}\cap\{\lambda=\lambda^\ast\}\big),$$
we obtain that with $\Prob^{(n)}_{\theta^\ast}$ probability at least $1-\varepsilon$, 
\begin{align*}
 \int \frac{1}{n}D^{(n)}_\alpha(\theta, \thetastar) \Pi_\alpha(d\theta\,|\, X^{(n)}) \le&\,\frac{1}{n (1 - \alpha)} \Big\{\int_{B_n(\thetastar,\varepsilon_{n,\lambdastar})} r_n(\theta,\thetastar) \rho(d\theta) + \varepsilon_n^2\Big\} + \frac{1}{n(1-\alpha)} \log(1/\varepsilon),
\end{align*}
where we used inequality~\eqref{Eqn:quasiPriorConcen}.
The rest of the proof follows the same lines as the proof of Corollary~\ref{Cor::quasirate} to obtain a high probability upper bound to the integral $\int_{B_n(\thetastar,\varepsilon_{n,\lambdastar})} r_n(\theta,\thetastar)$, leading to that for some sufficiently large $M$, with $\Prob^{(n)}_{\theta^\ast}$ probability tending to one,
\begin{align*}
 \int \frac{1}{n} D^{(n)}_\alpha(\theta, \thetastar) \Pi_\alpha(d\theta\,|\, X^{(n)}) \le&\,\frac{M}{1 - \alpha} \Big\{ \varepsilon_{n,\lambdastar}^2 
 + \Big( -\frac{1}{n}  \log \pi_{\lambdastar} -\frac{1}{n} \log \Pi(B_n(\thetastar,\varepsilon_{n,\lambdastar})) \Big),
\end{align*}
which implies the desired Bayesian oracle inequality.

\subsection{Proof of Proposition~\ref{Prop:NPB2}}

We use the fact that the projection of any function $f\in L_2(\mu^n; \,[0,1]^p)$ onto $\Theta_I$ is the conditional expectation $\bbE[f\,|\, X_I]$. Therefore, for any $I \not\supset I^\ast$, we have
\begin{align*}
d_n^2(f^\ast, \Theta_I) = \bbE[f^\ast - \bbE[f^\ast\, |\, X_I] ]^2 = \bbE[ \mbox{Var}[f^\ast(X_{I^\ast})\, | \, X_{I}]].
\end{align*}
Write $I = I_1 \cup I_2$ where $I_1 \subset I^\ast$ and $I_2 \cap I^\ast =\emptyset$. Then $I \not\supset I^\ast$ implies $I_1 \subsetneq I$.
By adding and subtracting $\bbE[f^\ast\, |\, X_{I_1}]$ in the first expression of $d_n^2(f^\ast, \Theta_I)$, we have
\begin{align}
 &d_n^2(f^\ast, \Theta_I) = \bbE[f^\ast - \bbE[f^\ast\, |\, X_I] ]^2 = \bbE[f^\ast -\bbE[f^\ast\, |\, X_{I_1}] + \bbE[f^\ast\, |\, X_{I_1}] - \bbE[f^\ast\, |\, X_I] ]^2\notag\\
 =&\,\bbE[f^\ast -\bbE[f^\ast\, |\, X_{I_1}]]^2 + 2\bbE[(f^\ast -\bbE[f^\ast\, |\, X_{I_1}])(\bbE[f^\ast\, |\, X_{I_1}] - \bbE[f^\ast\, |\, X_I] )] + \bbE[\bbE[f^\ast\, |\, X_{I_1}] - \bbE[f^\ast\, |\, X_I] ]^2\notag\\
 \overset{(i)}{=}&\,\bbE[f^\ast -\bbE[f^\ast\, |\, X_{I_1}]]^2+ \bbE[\bbE[f^\ast\, |\, X_{I_1}] - \bbE[f^\ast\, |\, X_I] ]^2\notag \\
 \geq &\, \bbE[f^\ast -\bbE[f^\ast\, |\, X_{I_1}]]^2 = d_n^2(f^\ast,\Theta_{I_1}). \label{Eqn:dn_compare}
\end{align}
Here, in step (i), we used the fact that under Assumption NP-C, the cross term
\begin{align*}
&2\bbE[(f^\ast -\bbE[f^\ast\, |\, X_{I_1}])(\bbE[f^\ast\, |\, X_{I_1}] - \bbE[f^\ast\, |\, X_I] )] \\
\overset{(ii)}{=}&\, 2\bbE[(f^\ast -\bbE[f^\ast\, |\, X_{I_1}]) \, \bbE[(\bbE[f^\ast\, |\, X_{I_1}] - \bbE[f^\ast\, |\, X_I] )\, |\, X_{I^\ast}]]\\
=&\, 2\bbE[(f^\ast -\bbE[f^\ast\, |\, X_{I_1}]) \, (\bbE[f^\ast\, |\, X_{I_1}] - \bbE[\bbE[f^\ast\, |\, X_I] \, |\, X_{I^\ast}])] \\
\overset{(iii)}{=}&\,  2\bbE[(f^\ast -\bbE[f^\ast\, |\, X_{I_1}]) \, (\bbE[f^\ast\, |\, X_{I_1}] - \bbE[f^\ast\, |\, X_{I_1}] )] =0,
\end{align*}
where in step (ii) we used the law of iterated expectations and the fact that $(f^\ast -\bbE[f^\ast\, |\, X_{I_1}])$ is a deterministic function of $X_{I^\ast}$ so that this term can be pulled out from the inner conditional expectation, and step (iii) follows by using the fact that if $X$, $Y$, $Z$ and $W$ are independent, then for any measurable function $f(x,y)$, $\bbE[\bbE[f(X,Y)\,|\, X, Z]\,|\, X, W] = \bbE[f(X, Y)\,|\, X]$.

Inequality~\eqref{Eqn:dn_compare} implies that 
\begin{align*}
\inf_{I \not\supset I^\ast} d_n^2(f^\ast, \Theta_I)  = \inf_{I \subsetneq I^\ast} d_n^2(f^\ast, \Theta_I).
\end{align*}
Combining this with the monotonicity of $d_n^2(f^\ast, \Theta_I)$ in $I$, we proved that
\begin{align*}
\inf_{I \not\supset I^\ast} d_n^2(f^\ast, \Theta_I) = \min_{j=1,\ldots,d^\ast}\bbE[f^\ast - \bbE[f^\ast\, |\, X_{I^\ast\setminus\{j\}}] ]^2 
=\min_{j=1,\ldots,d^\ast}
\bbE[ \mbox{Var}[f^\ast(X_{I^\ast})\, | \, X_{I^\ast\setminus\{j\}}]].
\end{align*}
The second part follows from the identity
\begin{align*}
\bbE[f^\ast - \bbE[f^\ast\, |\, X_{I^\ast\setminus\{j\}}] ]^2 =  \sum_{v\in \mathbb{N}_0^{I^\ast}, v_{j} \neq 0} \langle f^*, e_v\rangle^2,
\end{align*}
which can be verified by direct calculation and noticing the fact that $e_v$ forms an orthonormal bases and $e_0\equiv 1$ so that $\bbE[e_{v_j}(X_j)] = 0$ for any $j\in [p]$.

\subsection{Proof of Theorem~\ref{Thm:HNRconsistency}}
For simplicity, we only prove the part for the regular posterior distribution, and a proof for the fractional posterior distribution is almost the same. We prove this by verifying the assumptions in Theorem~\ref{Thm:Main} under the regression setup.

First we verify Assumption B1 and B4. 
By the choice of the prior over the model space, we have
\begin{align*}
\pi_{I^\ast} \geq c \, p^{-d^\ast} (1-p^{-1})^{p-d^\ast} \geq c' \, p^{-d^\ast} \geq c' \exp\big\{-d_0\log p\} \geq e^{-n\,\varepsilon_{n,I^\ast}^2},
\end{align*} 
implying the model space prior concentration in Assumption B1.
Using the result in Section 5.1 in \cite{vandervaart2009}, Assumption A2 is true with 
\begin{align*}
\varepsilon_{n,I} = n^{-\frac{\beta}{2\beta + |I|}} \, (\log n)^{\frac{4\beta +|I|}{4\beta +2 |I|}} \wedge \sqrt{\frac{d_0\log p}{n}},
\end{align*}
implying the parameter space prior concentration in Assumption B1. For regression model with random design, the likelihood ratio test for $\thetastar = f^\ast$ versus $\theta_1=f_1$ satisfies Assumption A1 with $d_n(f_1,f_2)=\|f_1-f_2\|_{\mu,2}$ when the function class is uniformly bounded by some constant, which is satisfied with our modified conditional prior $\Pi^{\mathrm{GP}}$. 
Equations (5.6) and (5.7) in \cite{yang2015b} implies the existence of the sieve sequence in Assumption A3 for any $\varepsilon>0$ and any model index $I \subset p$ satisfying $|I| \leq d_0$. 
These two arguments verified B4.

Assumption B3 is true by the condition on $\delta_n$ is the statement of the theorem and our choice of $\varepsilon_{n,I}$. The only remaining part is to verify Assumption B2. We invoke the following lemma, whose proof is provided in the next subsection. We remark that the proof of this lemma can be generalized to any RKHS associated with a stationary GP, which in turn can be converted into a anti-concentration inequality for the small ball probability of a GP. As an intermediate result, Lemma~\ref{Eqn:EigenSystem} in the proof of the following lemma, playing a key role that characterizes the eigensystem of any one-dimensional stationary kernel over $[0,1]^2$, is interest in its own right. In Appendix~\ref{App:eigen}, we characterize eigensystems of some popular stationary covariance kernels. 

\blems
\label{Lemma:Entropylower}
Under Assumption NP-C, if $a \geq 2$, then for any $\varepsilon \in (0,\, a^{-|I|/2})$, we have
\begin{align*}
\log N(\varepsilon,\bbH^a_I, d_n)\geq C\frac{a^{|I|}}{|I|^{|I|}} \log\Big(\frac{1}{\varepsilon\,  a^{|I|/2}}\Big)^{\frac{|I|+2}{2}}.
\end{align*}
\elems

Let $a_{n,I}:=n^{\frac{1}{2\beta +|I|}}$ denote the lower bound on the constraint of the conditional prior of $A$.
Under the condition that $\beta \geq d_0/2$, we always have 
\begin{align*}
\varepsilon_{n,I}  \leq a_{n, I}^{-|I|/2}
\end{align*}
 and the constraint $A\geq n^{\frac{1}{2\beta+ |I|}}$ in our conditional prior
for all $I$ and $a$ in the support of the prior of $A$ given $I$. According to Lemma 4.7 in \cite{vandervaart2009},  when $a\geq a_{n,I}$, we always have $\bbH^a_I \supset \bbH^{a_{n,I}}_I$ and therefore $\log N(\varepsilon,\bbH^a_I, d_n) \geq \log N(\varepsilon,\bbH^{a_{n,I}}_I, d_n)$ for any $\varepsilon>0$.
Using this fact, and applying Lemma~\ref{Lemma:Entropylower} and the result on the relation between the small ball probability of GP and the covering entropy of its associated RKHS \cite{vandervaart2008}, we obtain that for all $I$ such that $\varepsilon_{n,I}  \geq \varepsilon_{n,I^\ast}$,
\begin{align*}
\Pi_I(d_n(f,f^\ast) \leq M \varepsilon_{n,I} ) \leq \sup_{a\geq n^{1/(2\beta+ |I|)}} \Pi_I(d_n(f,f^\ast) \leq M \varepsilon_{n,I} \, |\, A=a) \leq e^{-C n^{\frac{|I|}{2\beta + |I|}}} = e^{-C\,n\,\varepsilon_{n,I}^2} \leq e^{-H \,n\,\varepsilon_{n,I^\ast}^2}
\end{align*}
holds for any $H>0$ when $n$ is sufficiently large, which verifies Assumption B2.

\subsection{Proof of Lemma~\ref{Lemma:Entropylower}}
By Assumption NP-C, $d_n(f_1,f_2)=\|f_1-f_2\|_{\mu,2}$ can be bounded below up to some multiplicative constant by $\|f_1-f_2\|_{U,2}$, where $U$ is the uniform distribution over $[0,1]$. Therefore, it suffices to prove a lower bound on the covering entropy with respect to the $\|\cdot\|_{U,2}$ metric.

According to the argument before Lemma~\ref{Eqn:EigenSystem},  the orthonormal eigenbasis of $K_I^a$ as a kernel over $[0,1]^I$ is
the tensor product of the one-dimensional one as $\{\phi_v(x) = \prod_{j\in I}\phi_{v_j}(x_j): v \in \mathbb{N}_0^{I}\}$, with the corresponding eigenvalue of
$\phi_v$ being $\eta_v:\,= \prod_{j\in I} \eta_{v_j}$. Under these notation, the unit ball $\bbH^a_I$ in the RKHS can be identified with the ellipsoid in $\ell_2$ as
\begin{align*}
\mathcal{E} = \{u=(u_v,\, v\in \mathbb{N}_0^{I}):\, \sum_{v} \frac{u_v^2}{\eta_v} \leq 1\}.
\end{align*}
The desired bound becomes
\begin{align*}
\log N(\varepsilon, \,\mathcal{E}, \,\|\cdot\|_{\ell_2})\geq C a^{|I|}\log\Big(\frac{1}{\varepsilon\,  a^{|I|/2}}\Big)^{\frac{|I|+2}{2}}.
\end{align*}
For any subset $V$ of $\mathbb{N}_0^{I}$, the following is true using the volume argument 
\begin{align}
\label{Eqn:VolArg}
N(\varepsilon, \,\mathcal{E}, \,\|\cdot\|_{\ell_2}) \geq \frac{\mbox{Vol}(\mathcal{E}_V)}{\mbox{Vol}(B_V)} \,\Big(\frac{1}{\varepsilon}\Big)^{|V|},
\end{align}
where $\mathcal{E}_V$ stands for coordinate projection of $\mathcal{E}$ onto $V$ and $B_V$ stands for the unit ball in $\bbR^{|V|}$.

\noindent By choosing $V=\{v\in \mathbb{N}_0^{I}:\, v_j \leq m -1, \, \forall j\in I\}$ for some $m\leq a^2$, we have
\begin{align*}
\frac{\mbox{Vol}(\mathcal{E}_V)}{\mbox{Vol}(B_V)} = \prod_{v\in V} \eta_v^{1/2} = \Big(\prod_{j=0}^{m-1} \eta_j^{1/2}\Big)^{|I|\,m^{|I|-1}} \asymp a^{-\frac{1}{2} |I|\, m^{|I|}} e^{-\frac{|I|\,m^{|I|+2}}{8a^2}},
\end{align*}
where we have used Proposition~\ref{Prop:squareexp} for expressions of eigenvalues $\{\eta_j:\,j\leq m-1\}$, and the exponent $|I|\,m^{|I|-1}$ is because every $\eta_j^{1/2}$ appears $|I| \, m^{|I|} / m$ times in the product $\prod_{v\in V} \eta_v^{1/2}$.
Since $|V| = m^{|I|}$, we combine the above with inequality~\eqref{Eqn:VolArg} to obtain
\begin{align*}
\log N(\varepsilon, \,\mathcal{E}, \,\|\cdot\|_{\ell_2})& \geq m^{|I|} \log\Big(\frac{1}{\varepsilon}\Big) - \frac{1}{2} |I|\, m^{|I|} \log a - \frac{|I|\,m^{|I|+2}}{8a^2}\\
& = m^{|I|} \log\Big(\frac{1}{\varepsilon\, a^{|I|/2}}\Big) - \frac{|I|\,m^{|I|+2}}{8a^2},
\end{align*}
By choosing
\begin{align*}
m \asymp \frac{a}{|I|} \log^{\frac12}\Big(\frac{1}{\varepsilon\, a^{|I|/2}}\Big),
\end{align*}
in the above display, we obtain
\begin{align*}
\log N(\varepsilon, \,\mathcal{E}, \,\|\cdot\|_{\ell_2}) \geq 
C\frac{a^{|I|}}{|I|^{|I|}}\log\Big(\frac{1}{\varepsilon\,  a^{|I|/2}}\Big)^{\frac{|I|+2}{2}}.
\end{align*}

\subsection{Proof of Theorem~\ref{Thm:HDRconsistency}}
As before, we will provide the proof only for the ordinary posterior. All the parts except for the verification of Assumption B2 are same as in the proof of Theorem 2.2 in \cite{norets2016} and Theorem~\ref{Thm:HNRconsistency} of the current paper.  The following Lemma guarantees the prior anti-concentration condition B2.
\begin{lems}
Under Assumption DGP and the prior distribution on $\theta$ in \S  \ref{Sec:DenReg},   for $I \supset I^\star$ and 
$\varepsilon_{n,I} = n^{-\frac{\beta}{2\beta + |I|+1}} \, (\log n)^t \wedge \sqrt{\frac{d_0\log p}{n}}$ with $t$ as defined in the statement of Theorem ~\ref{Thm:HDRconsistency}, we have
\begin{eqnarray}
\Pi( d_h^2 (f_0, p(\cdot | \cdot, \theta, m))  \leq M \varepsilon_{n, I}^2 )  \leq e^{- Hn \varepsilon_{n, I^\star}^2}
\end{eqnarray}
for some constants $M, H > 0$.  
\end{lems}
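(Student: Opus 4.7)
The plan is to verify prior anti-concentration for overly large models $I\supsetneq I^\star$ in the density regression setting, mirroring the Gaussian process argument in Lemma~\ref{Lemma:Entropylower}: the extra parameter dimensions associated with covariates outside $I^\star$ carry prior mass that cannot be concentrated into the narrow region of parameter space consistent with an $x_J$-invariant truth. Write $I=I^\star\cup J$ with $J\cap I^\star=\emptyset$ and $|J|\ge 1$, split each mixture location as $\mu_j^x=(\mu_j^{x,I^\star},\mu_j^{x,J})$, and use the product structure $\mu^p=\mu^{|I^\star|}\otimes\mu^{|J|}$ together with the fact that $f_0(y\mid x)=f_0(y\mid x_{I^\star})$ does not depend on $x_J$ to obtain, by a variance--bias decomposition over $x_J$,
\begin{equation*}
d_h^2(f_0,p(\cdot\mid\cdot,\theta,m))\;\ge\;\int\!\!\int \mathrm{Var}_{\mu^{|J|}}\bigl(\sqrt{p(y\mid x_{I^\star},\cdot)}\bigr)\,dy\,\mu^{|I^\star|}(dx_{I^\star}).
\end{equation*}
Thus on the event $\{d_h^2(f_0,p)\le M\varepsilon_{n,I}^2\}$ the mixture $p(\cdot\mid\cdot,\theta,m)$ must be nearly independent of $x_J$ for $\mu^{|I^\star|}$-most $x_{I^\star}$.

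Next I would translate this functional constraint into a geometric constraint on the mixture centers. Since $\sigma_n\asymp(\varepsilon_{n,I}/\log(1/\varepsilon_{n,I}))^{1/\beta}$ is small, the weights $w_j(x)\propto\alpha_j\exp(-\|x-\mu_j^x\|^2/(2\sigma_n^2))$ are sharply peaked about each center, so $p(y\mid x_{I^\star},x_J)$ changes substantially as $x_J$ varies by more than $\sigma_n$ past any $\mu_j^{x,J}$ whose matching $(\alpha_j,\mu_j^y)$ is not echoed by a nearby center. A mixture-identifiability argument in the spirit of \cite{norets2016}, organized by clustering the $m$ components according to $(\mu_j^y,\mu_j^{x,I^\star})$, shows that near-independence in $x_J$ at level $\varepsilon_{n,I}$ forces the $J$-coordinates of all representatives of each cluster to lie within a common $O(\sigma_n)$-ball in $[0,1]^{|J|}$; a union over the polynomially many clustering patterns is harmless.

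With the geometric constraint in hand, the prior-mass bound is direct. The coordinates $\{\mu_j^{x,J}\}_{j=1}^m$ are a priori independent, each with a bounded density on $[0,1]^{|J|}$, so the probability that they all fit into any prescribed region of volume $O(\sigma_n^{|J|})$ is at most $(C\sigma_n^{|J|})^m$. Substituting $m\asymp\sigma_n^{-|I|}(\log(1/\varepsilon_{n,I}))^{|I|+|I|/\tau}$ from the specification in \S\ref{Sec:DenReg} yields
\begin{equation*}
\Pi\bigl(d_h^2(f_0,p(\cdot\mid\cdot,\theta,m))\le M\varepsilon_{n,I}^2\bigr)\;\le\;\exp\bigl(-c\,m\,|J|\,\log(1/\sigma_n)\bigr)\;\le\;\exp(-c\,n\,\varepsilon_{n,I}^2),
\end{equation*}
and since $\varepsilon_{n,I}\ge\varepsilon_{n,I^\star}$ for $I\supsetneq I^\star$ while the assumption $\beta>d_0$ keeps the resulting exponent of order $n\varepsilon_{n,I^\star}^2$ with room to spare, the right side is bounded by $\exp(-Hn\varepsilon_{n,I^\star}^2)$ for any prescribed $H$ once $M$ (and hence $c$) is taken large enough.

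The main obstacle is the second step: a location mixture of Gaussians is a ratio of sums, so different components can in principle compensate one another to mimic $x_J$-invariance even when their $\mu_j^{x,J}$'s are spread across $[0,1]^{|J|}$. Ruling this out requires the small-bandwidth kernel identifiability (pointwise separation of different centers as $\sigma_n\to 0$), plus careful bookkeeping to absorb the polynomial cost of enumerating clustering patterns into the gap between $\exp(-cn\varepsilon_{n,I}^2)$ and $\exp(-Hn\varepsilon_{n,I^\star}^2)$ furnished by $\beta>d_0$.
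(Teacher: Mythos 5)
There is a genuine gap at the heart of your argument: the claimed geometric constraint in your second step is false. Approximate (even exact) $x_J$-invariance of the conditional density does \emph{not} force the $J$-coordinates of the mixture centers within a cluster into a common $O(\sigma_n)$-ball. Concretely, take any configuration $\{(\alpha_k,\mu_k^y,\mu_k^{x,I^\star})\}_{k\le K'}$ that approximates $f_0$ on $\mathcal{Y}\times[0,1]^{I^\star}$, replicate it at arbitrary locations $\nu_1,\dots,\nu_L\in[0,1]^{|J|}$ with product weights $\alpha_{k,l}=\alpha_k\beta_l$. The joint density then factorizes as a function of $(y,x_{I^\star})$ times a function of $x_J$, so the conditional $p(y\mid x)$ is \emph{exactly} free of $x_J$ and still approximates $f_0$, while the $\mu_j^{x,J}$'s are spread over the whole cube. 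Hence the event $\{d_h^2(f_0,p)\le M\varepsilon_{n,I}^2\}$ contains parameter configurations violating your clustering condition, the bound $(C\sigma_n^{|J|})^m$ on its prior mass does not follow, and no ``small-bandwidth identifiability'' argument can rescue it, because these compensating configurations are exact $x_J$-invariant mimics, not approximate ones. (A secondary issue: even granting the clustering claim, $m\,|J|\log(1/\sigma_n)\asymp n^{|I|/(2\beta+|I|+1)}$ up to logarithms, which is of strictly smaller polynomial order than $n\varepsilon_{n,I}^2\asymp n^{(|I|+1)/(2\beta+|I|+1)}$, so your displayed inequality $\exp(-c\,m\,|J|\log(1/\sigma_n))\le\exp(-c\,n\,\varepsilon_{n,I}^2)$ does not hold as stated.)

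The paper avoids the invariance route altogether. It fixes the approximant $\theta^\star$ of $f_0$ from the construction in \cite{shen2013adaptive} (with all weights bounded below), writes $d_h^2(f_0,p(\cdot\mid\cdot,\theta,m))\ge \mathrm{I}-\mathrm{II}-\mathrm{III}$ with $\mathrm{I}=d_H^2(p(\cdot\mid\theta,m),p(\cdot\mid\theta^\star,m))$, shows $\mathrm{II}\le\mathrm{III}\lesssim\sigma_n^{2\beta}$ by H\"older, and then lower-bounds $\mathrm{I}$ by the squared Wasserstein distance $W_1^2$ between the mixing measures using the strong identifiability results of \cite{ho2016strong}. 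Thus the small-Hellinger event forces \emph{all} $m$ locations $\mu_j$ (in every coordinate, not just the $J$-block) to be close to the prescribed $\mu_j^\star$'s, and the prior probability of that is $\asymp\exp\{-Cm\log(1/\varepsilon_{n,I})\}$, which is what delivers the anti-concentration bound. If you want to keep a route based on the structure of the true model, you would have to control the prior mass of the full set of near-$f_0$ conditionals (including the factorized/replicated configurations above), which in effect brings you back to pinning down the mixing measure as in the paper's Wasserstein argument.
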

\begin{proof}
Without loss of generality, we assume $\mu$ to be $\mbox{Unif}(0,1)$.   By triangle inequality, we have for any $\theta^*$ in the parameter space
\begin{align*}
d_h^2(f_0, p(\cdot | \cdot, \theta, m)) \geq \mathrm{I} - \mathrm{II} - \mathrm{III} 
\end{align*}
where $\mathrm{I} =  d_H^2(p(\cdot |\theta, m) , p(\cdot |\theta^*, m) ), \mathrm{II} =     d_H^2(p(\cdot |\cdot, \theta^*, m), p(\cdot |\theta^*, m) ), \mathrm{III} = d_H^2(f_0, p(\cdot |\theta^*, m) )$.  
We will first provide an upper bound for $\mathrm{III}$ by choosing an appropriate $\theta^*$. For 
$\sigma_n = [\varepsilon_{n, I} / \log (1/ \varepsilon_{n, I}) ]^{1/\beta}$,
$\varepsilon$ defined in \eqref{eq:asnE0Dff0_Lf0},
a sufficiently small $\delta>0$,
$b$ and $\tau$ defined in \eqref{eq:asnf0_exp_tails},
$a_0 = \{ (8\beta + 4\varepsilon +16)/(b \delta)\}^{1/\tau}$,
$a_{\sigma_n} = a_0 \{\log (1/\sigma_n) \}^{1/\tau}$, 
and $b_1 > \max \{1, 1/ 2\beta \}$ satisfying $\varepsilon_{n, I}^{b_1}  \{  \log (1/ \varepsilon_{n, I}) \}^{5/4} \leq \varepsilon_{n, I}$,
the proof of Theorem 4 in \cite{shen2013adaptive} implies the following three claims.
First,
there exists a partition of $\{z \in \mathcal{Z}: ||z|| \leq a_{\sigma_n}\}$,
$\{U_j, j=1,\ldots,K\}$ such that for $j=1,\ldots,N$,
$U_j$ is a ball with diameter $\sigma_n \varepsilon_{n, I}^{2 b_1}$
and center $z_j = (x_j, y_j)$;
for $j=N+1,\ldots,K$,  
$U_j$ is a set with a diameter bounded above by $\sigma_n$;
$1 \leq N < K \leq C_2 \sigma_n^{-d} \{\log (1/ \varepsilon_{n, I}) \}^{d +d/\tau}$, 
where $C_2>0$ does not depend on $n$.
Second, there exist 
$\theta^\star = \{\mu_j^\star, \alpha_j^\star, j = 1,2,\ldots; \sigma_n\}$ with
$\alpha_j^\star=0$ for $j > N$, $\mu_j^\star=z_j$ for $j=1,\ldots,N$, and
$\mu_j^\star \in U_j$ for $j=N+1,\ldots,K$
 such that 
for $m=K =  \sigma_n^{-|I|} \{\log (1/ \varepsilon_{n, I}) \}^{|I| +|I|/\tau}$ and a positive constant $C_3$,
\begin{equation}
\label{eq:f0upsbeta}
d_H(f_0, p(\cdot |\theta^\star, m) ) \leq C_3 \sigma_n^\beta.
\end{equation}
Furthermore, since $m \varepsilon_{n, I} \to 0$ as $\beta > d_0$, one can choose the probabilities $\alpha_j^\star$ of $\theta^\star$  to be larger or equal to $\varepsilon_{n, I}/\delta$,
for some $0<\delta < 1$  since if not, we can add $\varepsilon_{n, I}/\delta$ to  $\alpha_j^\star$ and renormalize. 
%Third, there exists constant $B_0>0$ such that
%\begin{equation}
%\label{eq:P0_zgeqa}
%P_0(\norm{z} > a_{\sigma_n}) \leq B_0 \sigma_n^{4\beta + 2\varepsilon +8}.
%\end{equation}
We next prove that $\mathrm{II} \leq \mathrm{III}$.   Observe that 
\begin{eqnarray*}
\mathrm{II} = \int (\sqrt{g(x)} - 1)^2 dx
\end{eqnarray*}
where $g(x)$ is the marginal density of $x$ obtained from $p(\cdot |\theta^\star, m)$, i.e. $g(x) = \int  p(y, x|\theta^\star, m) dy$. 
Write $1= \int f_0(y |x) dy$.   Note that by H\"{o}lder's inequality,
\begin{eqnarray*}
\sqrt{g(x)} \geq  \int  \sqrt{p(y, x|\theta^\star, m) f_0(y |x)} dy
\end{eqnarray*}
Hence,
\begin{eqnarray*}
 \mathrm{II} = d_H^2(p(\cdot |\cdot, \theta^\star, m), p(\cdot |\theta^\star, m) )
&\leq& 2 -  2\int \int \sqrt{p(y, x|\theta^\star, m) f_0(y |x)} dy dx  = \mathrm{III}. 
\end{eqnarray*}

Finally, we derive a lower  bound for $\mathrm{I}$.  From  the proof of Theorem 3.1 of \cite{ho2016strong}, we obtain
for exact-fitted mixtures, the first-order identifiability condition (which is trivially satisfied for multivariate isotropic Gaussian kernels) suffices for obtaining that
\begin{eqnarray*}
d_H^2(p(\cdot |\theta, m) , p(\cdot |\theta^\star, m) )  \geq W_1^2(p(\cdot |\theta, m) , p(\cdot |\theta^\star, m))
\end{eqnarray*}
where $W_1$ is the Wasserstein distance of order $1$ and is given by 
\begin{eqnarray}\label{eq:wasser}
W_1(p(\cdot |\theta, m) , p(\cdot |\theta^\star, m)) =  \inf_q \sum_{j, j'} q_{jj'} \norm{\mu_j - \mu_{j'}^\star}.  
\end{eqnarray}
In \eqref{eq:wasser}, the infimum is taken over all joint probability distributions $q$ on $[1, \ldots , m]^2$ such that
when expressing $q$ as a $m \times m$ matrix under the marginal constraints
$\sum_j q_{jj'} = \alpha_{j'}$ and $\sum_{j'} q_{jj'} = \alpha_{j}^\star$. 
Hence it suffices to obtain a result like 
\begin{eqnarray*}
P\bigg( \inf_{q} \sum_{j=1}^mq_{jj} \norm{\mu_j - \mu_{j}^\star} < M\varepsilon_{n, I}\bigg) \leq e^{- n H \varepsilon_{n, I^\star}^2}.  
\end{eqnarray*}
for some constants $M, H > 0$.  
It  follows from \cite{ho2016strong} that $q_{jj} = \min \{\alpha_j, \alpha_j^\star \}$ if  $\|\mu_j - \mu_{j}^\star\|$ are close enough for all $j = 1, \ldots, m$.  Since $\alpha_j^\star \geq \varepsilon_{n, I}/\delta, \alpha_j > b/m$ by construction, and using the fact that 
$P(\sum_{j=1}^m  \|\mu_j - \mu_{j}^\star\|  \leq \epsilon) \asymp \exp\{-Cm \log (1/\epsilon)\}$ for sufficiently small $\epsilon > 0$ and $C >0$, 
we obtain 
\begin{eqnarray}
\Pi( d_h^2 (f_0, p(\cdot | \cdot, \theta, m))  \leq M \varepsilon_{n, I}^2 )  \leq e^{- Hn \varepsilon_{n, I^\star}^2}. 
\end{eqnarray}
Using the result in Theorem 2.2 in \cite{norets2016}, Assumption A2 is satisfied with
\begin{align*}
\varepsilon_{n,I} = n^{-\frac{\beta}{2\beta + |I| +1}} \, (\log n)^t \wedge \sqrt{\frac{d_0\log p}{n}},
\end{align*}
with $t $ as defined in the statement of \eqref{eq:modelrate}. 
\end{proof}
%%%%%%%%%%%%%%%%%%%%%%%%%%%%%%%%%%%%%%%%%%%%%%%%%%%%%%%%%%%%%%%%%%%%%%%%%%

\appendix
\makeatletter   %% HAVE TO ADD SOMETHING HERE TO MAKE IT SAY "APPENDIX"
 \renewcommand{\@seccntformat}[1]{Appendix~{\csname the#1\endcsname}:\hspace*{0.5em}}
 \makeatother

\section{Some results on eigen system of popular  covariance kernels}\label{App:eigen}
In this section, we develop a general technique to find the eigen sytem of commonly used stationary covariance kernels.   We will focus our attention only to 
covariance kernels on $[0, 1]^I \times [0, 1]^I$ which are separable in its coordinates.  To that end we start with a kernel  $K^a(x, y)$ on $[0, 1]\times [0,1]$ and construct a separable class of covariance kernels on $[0, 1]^I\times [0, 1]^I$  by considering product of univariate kernels $K_I^a(x, y)  =  \prod_{j \in I} K^a(x_j, y_j)$. Example include the Mat\'{e}rn class of  covariance kernels 
\begin{eqnarray}\label{eq:matern}
K^a(x, y) =  \frac{2^{1-\nu}}{\Gamma(\nu)} \big\{\sqrt{2\nu} \abs{x-y} \big\}^\nu  B_\nu\big\{ \sqrt{2\nu} a \abs{x-y}\big\},
\end{eqnarray}
where $B_\nu$ is the modified Bessel function of the second kind for $0<\nu \leq \infty$. 
\eqref{eq:matern} contains the squared exponential 
\begin{eqnarray}\label{eq:sqexp}
K^a_I(x,y)   =  e^{-a^2 (x - y)^2}, 
\end{eqnarray}
as a special case when $\nu =\infty$. To obtain the eigen system of $K^a_I(x,y)$, it 
is  enough to characterize the eigen system of $K^a(x,y)$ defined on $[0, 1]\times [0, 1]$.   
Let $\{\phi_j(x),\, j=0,1,2,\ldots\}$ denote the orthonormal basis functions of the one-dimensional function space $\bbL_2(U;\, [0,1])$ such that $\phi_j$ is the $j$th eigenfunction of $K$ corresponding to the $j$th largest eigenvalue $\eta_j$ (the existence is guaranteed by Mercer's theorem). 
%Because $K^a_I$ is separable in its coordinates, its orthonormal eigenbasis is
%the tensor product of the one-dimensional one as $\{\phi_v(x) = \prod_{j\in I}\phi_{v_j}(x_j): v \in \mathbb{N}_0^{I}\}$, with the corresponding eigenvalue of
%$\phi_v$ being $\eta_v:\,= \prod_{j\in I} \eta_{v_j}$. We first study the eigensystem of the one-dimensional kernel $K^a(x,y)=exp\{-a^2(x-y)^2\}$ over $[0,1]^2$.
The following lemma characterizes the eigensystem of a general stationary kernel $K(x,y)$ with respect to $\|\cdot\|_{U,2}$, which turns out to be closely related to Fourier series, and is interesting in its own right.  Since $K$ is stationary, we simply write $k(x-y) = K(x,y)$. 
\blems\label{Eqn:EigenSystem}
For any stationary kernel $K$ over $[0,1]^2$, if $f_K$ is its spectral density, then all eigenvalues of $K$ are 
\begin{align*}
\eta_0 = \int_{-1}^1 k(t)\, dt, \quad \eta_{2j-1} =\eta_{2j} = \int_{-1}^1 k(t) \, e^{i\, j\pi t}\, dt, \quad j\geq 1,
\end{align*}
where $\eta_0$ has multiplicity one, corresponding to the eigenfunction $\phi_0(x) =1$, and all $\eta_{2j-1} = \eta_{2j}$ ($j\geq 1$) has multiplicity two, corresponding to eigenfunctions
\begin{align*}
\phi_{2j-1}(x)=\sin(j\pi x), \quad\mbox{and}\quad \phi_{2j}(x)=\cos(j\pi x).
\end{align*}
\elems

\begin{proof}
It is obvious that $\{\phi_{j}:\, j\geq 0\}$ forms a complete orthonormal basis for $\mathcal{L}_2(U; \,[0,1])$. Now $k(\cdot)$ is an even function over $[-1,1]$, we can expand it via Fourier series as
\begin{align*}
k(t) = \sum_{u=0}^\infty \alpha_u \cos(u \pi t), \quad t\in[-1,1],
\end{align*}
where we used the fact that the Fourier coefficient for $\sin$'s are zero because $k$ is an even function, and
\begin{align*}
\alpha_u = \int_{-1}^1 k(t) \cos (u\pi t)dt = \int_{-1}^1 k(t) e^{i \, u\pi t}dt, \quad u\geq 1,
\end{align*}
where in the last step we used the fact that $K(t)$ is an even function.
\begin{align*}
h(\psi) = \frac{1}{2\pi}\int e^{i\,t\psi} k(t) dt.
\end{align*}
 Use the identity that for any $x, y$,
\begin{align*}
\cos(x-y) = \cos(x)\cos(y)+\sin(x)\sin(y),
\end{align*}
we obtain
\begin{align*}
k(x,y) =K(x-y) = \sum_{u=0}^\infty \alpha_u\big[  \cos(u\pi x)\cos(u\pi y)+\sin(u\pi x)\sin(u\pi y) \big], \quad x, y\in[0,1],
\end{align*}
implying the claimed result.
\end{proof}
Next, we apply Lemma  \ref{Eqn:EigenSystem}  to find eigen-values of squared exponential \eqref{eq:sqexp}
and Mat\'{e}rn  \eqref{eq:matern} covariance kernels.  

\setcounter{props}{1}
 
\begin{props}\label{Prop:squareexp}
If $K^a(x,y)$ is given by \eqref{eq:sqexp}, then 
\begin{align*}
\eta_0 \asymp a^{-1}, \quad \mbox{and}\quad \eta_{2j-1}=\eta_{2j} \asymp  a^{-1} e^{-j^2/a^2}, \qquad\mbox{for $j=1,\ldots, a^2$}.
\end{align*}
\end{props}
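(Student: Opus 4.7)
The plan is to apply Lemma~\ref{Eqn:EigenSystem} directly to $k(t) = e^{-a^2 t^2}$ and then estimate the two resulting integrals
$$\eta_0 = \int_{-1}^{1} e^{-a^2 t^2}\,dt, \qquad \eta_{2j-1} = \eta_{2j} = \int_{-1}^{1} e^{-a^2 t^2}\, e^{i j \pi t}\,dt.$$
The upshot is that both are close to their full-line Gaussian counterparts when $a \geq 2$, so the answer reduces to standard Gaussian Fourier calculus. For $\eta_0$ I will compare with $\int_{\mathbb{R}} e^{-a^2 t^2}\,dt = \sqrt{\pi}/a$, controlling the tail by $\int_{|t|>1} e^{-a^2 t^2}\,dt \lesssim a^{-1} e^{-a^2}$, which is of smaller order than $a^{-1}$; this gives $\eta_0 \asymp a^{-1}$.

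For $\eta_{2j-1}$ the main step is completing the square:
$$-a^2 t^2 + i j\pi t = -a^2\Bigl(t - \tfrac{i j\pi}{2a^2}\Bigr)^{\!2} - \tfrac{j^2 \pi^2}{4 a^2},$$
so that
$$\eta_{2j-1} = e^{-j^2 \pi^2/(4 a^2)} \int_{-1}^{1} \exp\!\Bigl\{-a^2\bigl(t - \tfrac{i j\pi}{2 a^2}\bigr)^{\!2}\Bigr\}\,dt.$$
The remaining integral I will evaluate by shifting the contour back to the real axis: since the integrand is entire, the integral over $[-1,1] + i\,\tfrac{j\pi}{2a^2}$ equals the integral over $[-1,1]$ plus two vertical side contributions. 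Under the hypothesis $j \leq a^2$ the imaginary shift $j\pi/(2a^2)$ is bounded by $\pi/2$, which keeps the side contributions under control (they decay like $a^{-1}e^{-a^2}$ times a bounded factor from the shift). Comparing the resulting real-line integral to $\sqrt{\pi}/a$ as in the $\eta_0$ case then yields
$$\eta_{2j-1} \asymp a^{-1}\, e^{-c\,j^2/a^2}$$
for a positive constant $c$ (here $c = \pi^2/4$ absorbed into the $\asymp$ notation, consistent with the stated bound).

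The main obstacle I expect is the contour-shift step when $j$ is near its upper limit $a^2$. There the shift $j\pi/(2a^2)$ is of order one while $a$ is large, so I must check that the vertical segments contribute only $O(a^{-1} e^{-a^2})$, which is dominated by $a^{-1} e^{-cj^2/a^2}$ only because $j^2/a^2 \leq a^2$. Concretely, on the vertical segment at $t = \pm 1 + i s$, $s \in [0, j\pi/(2a^2)]$, one has $|\exp\{-a^2(t - i j\pi/(2a^2))^2\}| = \exp\{-a^2(1 - (j\pi/(2a^2) - s)^2)\} \leq e^{-a^2(1 - \pi^2/4)}$ (which is fine once one checks $\pi^2/4 < 1$; otherwise one simply shortens the shift and bounds the residual interval by a crude Gaussian tail). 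Writing out these bounds carefully is the only non-cosmetic computation; everything else is direct substitution into Lemma~\ref{Eqn:EigenSystem} followed by Gaussian integral identities.
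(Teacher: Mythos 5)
Your overall route is the paper's: reduce to the Fourier-coefficient integrals via Lemma~\ref{Eqn:EigenSystem} and compare them with the full-line Gaussian transform $\frac{\sqrt{\pi}}{a}e^{-\pi^2 j^2/(4a^2)}$, with an exponentially small truncation error. The genuine problem is the step you yourself single out as the only non-cosmetic computation: the contour shift on the finite interval. On the vertical segment $t=\pm 1+is$ with $s$ ranging up to $j\pi/(2a^2)$, the integrand has modulus $e^{-a^2(1-s^2)}$; since the shift can be as large as $\pi/2$ and $\pi^2/4>1$, your bound is $e^{-a^2(1-\pi^2/4)}=e^{(\pi^2/4-1)a^2}$, which is exponentially \emph{large}, not $O(a^{-1}e^{-a^2})$. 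The parenthetical check ``fine once one checks $\pi^2/4<1$'' fails, and the fallback (``shorten the shift and bound the residual interval by a crude Gaussian tail'') is not an argument --- and it is needed precisely in the regime $j\asymp a^2$ that you flagged as the delicate one. As written, the two-sided estimate for $\eta_{2j-1}=\eta_{2j}$ is not established there.

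The fix is to avoid shifting the truncated contour altogether, which is what the paper does. Write
\begin{align*}
\int_{-1}^{1}e^{-a^2t^2}e^{i j\pi t}\,dt
=\int_{\mathbb{R}}e^{-a^2t^2}e^{i j\pi t}\,dt-\int_{|t|\ge 1}e^{-a^2t^2}e^{i j\pi t}\,dt,
\end{align*}
evaluate the first term as the standard Gaussian transform $\frac{\sqrt{\pi}}{a}e^{-\pi^2 j^2/(4a^2)}$ (your completing-the-square computation is unproblematic on the whole line, where the shifted contour has no boundary contributions), and bound the second term by $\int_{|t|\ge 1}e^{-a^2t^2}\,dt\lesssim a^{-1}e^{-a^2}$ using $|e^{i j\pi t}|=1$; this is exactly the paper's spectral-density argument with $h^a(t)=\frac{\sqrt{\pi}}{a}e^{-t^2/(4a^2)}$. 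You must then still verify that the main term dominates the $a^{-1}e^{-a^2}$ error over the stated range $j\le a^2$ with $a\ge 2$; it is this comparison --- not the contour geometry --- that actually uses the restriction $j\le a^2$, and it is where the constant in the exponent gets absorbed into the stated form $a^{-1}e^{-j^2/a^2}$, as in the paper's final step comparing with $h^a(j)$.
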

\begin{proof}
Let $h^a$ denote the spectral density of any one dimension stationary kernel $K$ (by Bochner's theorem), that is,
\begin{align*}
K^a(x,y) &= \int e^{-i \, t(x-y)}h(t) dt, \qquad\mbox{$x, y\in[0,1]$},\\
\mbox{or}\quad  h^a(t) &=\frac{1}{2\pi} \int e^{i \, t u} k^a(u) du, \qquad t \in\bbR.
\end{align*}
For the one-dimensional Gaussian kernel $K^a$, its spectral density $h^a$ is
\begin{align*}
h^a(t) = \frac{\sqrt{\pi} }{a}\exp\Big\{-\frac{t^2}{4a^2}\Big\}, \quad t\in\bbR.
\end{align*}
 Note that  
$$\int_{|t| \leq 1}  k^a(u)\, e^{i\, j\pi u}\, du  = h^a(j) -   \int_{|t|\geq 1} k^a(u)\, e^{i\, j\pi u}\, du.   $$ 
Using the tail bound for the standard normal distribution, we have 
$$\int_{|t|\geq 1} k^a(u)\, e^{i\, j\pi u}\, du \leq \int_{|t|\geq 1} k^a(u)\, du \asymp a^{-1} \exp\{-a^2\},$$
for $j=0,1,2,\ldots$. Combining this with Lemma~\ref{Eqn:EigenSystem}, we obtain the proof since for  $a\geq 2$, $h^a(j) \geq a^{-1} \exp\{-a^2\}$ for $j=0, 1, 2, \ldots, a^2$.
%\begin{align*}
%\eta_0 \asymp a^{-1}, \quad \mbox{and}\quad \eta_{2j-1}=\eta_{2j} \asymp a^{-1} e^{-j^2/a^2}, \qquad\mbox{for $j=1,\ldots, a^2$}.
%\end{align*}
\end{proof}

\begin{props}
If $K_a(x,y)$ is given by \eqref{eq:matern}, then   
\begin{align}
\lambda_0 \asymp  a^{-1},  \quad  \lambda_{2j-1}=\lambda_{2j} \asymp a^{-1} (1+ j^2/a^2)^{-(\nu + 1/2)}, j=1,\ldots, n^\kappa \label{eq:matbasis}
\end{align}
for any $\kappa > 0$, provided $a \geq C(\nu, \kappa) \log n$ for a constant $C(\nu, \kappa) > 0$ depending on $\nu$ and $\kappa$.  
\end{props}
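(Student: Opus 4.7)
The plan is to mirror almost verbatim the argument given for the squared exponential case (Proposition~\ref{Prop:squareexp}), making the two substitutions that are specific to the Mat\'ern family: (i) replace the Gaussian spectral density by the Mat\'ern spectral density, and (ii) replace the Gaussian tail bound by the exponential tail bound of the Bessel function. Specifically, I would begin by invoking Lemma~\ref{Eqn:EigenSystem} to write
\begin{align*}
\eta_0 \,=\, \int_{-1}^{1} k^a(u)\,du, \qquad \eta_{2j-1}=\eta_{2j} \,=\, \int_{-1}^{1} k^a(u)\,e^{i j\pi u}\,du,
\end{align*}
and then approximate these truncated integrals by the spectral density
\begin{align*}
h^a(t) \,=\, \frac{1}{2\pi}\int_{\mathbb{R}} e^{i t u} k^a(u)\,du
\end{align*}
evaluated at $t=0$ and $t=j\pi$. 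For the Mat\'ern kernel \eqref{eq:matern} with inverse bandwidth $a$ this spectral density is well known to be proportional to $a^{-1}(1+t^2/a^2)^{-(\nu+1/2)}$, which supplies the claimed asymptotic scaling.

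Next I would quantify the truncation error $\int_{|u|\ge 1} k^a(u)\,e^{ij\pi u}\,du$. Using the standard asymptotic $B_\nu(z)\sim \sqrt{\pi/(2z)}\,e^{-z}$ as $z\to\infty$, together with the factor $\{\sqrt{2\nu}\,|u|\}^\nu$ in \eqref{eq:matern}, we have
\begin{align*}
\Bigl|\int_{|u|\ge 1} k^a(u)\,e^{ij\pi u}\,du\Bigr| \,\le\, \int_{|u|\ge 1} |k^a(u)|\,du \,\lesssim\, c_\nu\, a^{-1/2}\, e^{-\sqrt{2\nu}\, a},
\end{align*}
which is exponentially small in $a$. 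On the other hand, for $j\le n^\kappa$ and $a\ge C(\nu,\kappa)\log n$ we have the crude but sufficient polynomial lower bound
\begin{align*}
h^a(j\pi) \,\gtrsim\, a^{-1}\bigl(1+j^2/a^2\bigr)^{-(\nu+1/2)} \,\gtrsim\, a^{-1}\,n^{-2\kappa(\nu+1/2)},
\end{align*}
so choosing $C(\nu,\kappa)$ large enough (of order $2\kappa(\nu+1/2)/\sqrt{2\nu}$, say) forces $h^a(j\pi)$ to dominate the tail error by a multiplicative factor, establishing $\eta_{2j-1}=\eta_{2j}\asymp h^a(j\pi)$ uniformly in $j\le n^\kappa$.

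Finally, the conclusion \eqref{eq:matbasis} follows by plugging in the explicit form of $h^a$; the $j=0$ case is immediate since $h^a(0)\asymp a^{-1}$ and the tail is negligible. The main obstacle, as in the Gaussian case, is arranging the bandwidth condition $a\gtrsim \log n$ so that the exponentially small tail of $k^a$ cannot overwhelm the polynomially small values of $h^a$ at large frequencies; the rest of the argument is a direct transcription of the steps used for the squared exponential kernel, with the Bessel tail estimate playing the role of the Gaussian tail estimate.
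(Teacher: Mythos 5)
Your proposal is correct and takes essentially the same route as the paper's proof: invoke Lemma~\ref{Eqn:EigenSystem}, identify the truncated integrals with the Mat\'ern spectral density $h^a(j\pi)\asymp a^{-1}\bigl(1+j^2/a^2\bigr)^{-(\nu+1/2)}$, bound the tail $\int_{|u|\ge 1}k^a(u)\,du$ by a quantity exponentially small in $a$ via the decay of $B_\nu$, and take $a\ge C(\nu,\kappa)\log n$ so this tail is dominated by the smallest spectral value over $j\le n^\kappa$. The only cosmetic difference is that you use the standard asymptotic $B_\nu(z)\sim\sqrt{\pi/(2z)}\,e^{-z}$ and spell out the size of $C(\nu,\kappa)$, whereas the paper cites Gaunt's Bessel-function inequalities for the same tail estimate.
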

\begin{proof}
The spectral density of the M\'{a}tern kernel is given by 
\begin{align*}
h^a(\psi) := \frac{1}{2\pi}\int e^{i\,t\psi} k^a(t) dt = C \frac{1}{a} \Big( 1  + \frac{\psi^2}{a^2}  \Big)^{-(\nu + 1/2)}. 
\end{align*}
We now argue that by choosing $a \ge  C\log n$ for a suitably large constant $C$, one can ensure that 
$\int_{|u| \leq 1}  k^a(u)\, e^{i\, j\pi u}\, du$ is of the same order as $\int_{-\infty}^{\infty}  k^a(u)\, e^{i\, j\pi u}\, du$. 
This is true, since
\begin{eqnarray*}
\int_{|u|\geq 1} k^a(u)\, e^{i\, j\pi u}\, du \leq  \int_{|u|\geq 1} k^a(u)\, du =  C_1\int_{|u|\geq C_2a} t^{\nu} B_{\nu}(t)  \asymp  e^{-C_3a}
\end{eqnarray*}
for $j=0,1,2,\ldots$,  
where the last inequality follows from Theorem 2.5 and (A.5) of \cite{gaunt2014inequalities}.   
Since $e^{-C_3a}$ can be made smaller than $\lambda_{n^{\kappa}}$ by choosing $a \geq C(\nu, \kappa) \log n$, we can estimate $\lambda_{2j-1} = \lambda_{2j} \asymp h^a(j \pi)$, delivering the proof of the Proposition. 
\end{proof}

%%%%%%%%%%%%%%%%%%%%%%%%%%%%%%%%%%%%%%%%%%%%%%%%%%%%%%%%%%%%%%%%%%%%%%%%%%

%% BIBLIOGRAPHY 
\bibliographystyle{plain}

\bibliography{MS}

\begin{thebibliography}{10}

\bibitem{Bar:2007}
Peter~L. Bartlett.
\newblock Fast rates for estimation error and oracle inequalities for model
  selection.
\newblock {\em Econometric Theory}, 24, 2008.

\bibitem{bartlett2005}
Peter~L. Bartlett, Olivier Bousquet, and Shahar Mendelson.
\newblock Local rademacher complexities.
\newblock {\em Ann. Statist.}, 33:1497--1537, 2005.

\bibitem{Bartlett:2003}
Peter~L. Bartlett and Shahar Mendelson.
\newblock Rademacher and {G}aussian complexities: Risk bounds and structural
  results.
\newblock {\em J. Mach. Learn. Res.}, 3:463--482, 2003.

\bibitem{berger1996intrinsic}
James~O Berger and Luis~R Pericchi.
\newblock The intrinsic bayes factor for model selection and prediction.
\newblock {\em Journal of the American Statistical Association},
  91(433):109--122, 1996.

\bibitem{bhattacharya2014anisotropic}
Anirban Bhattacharya, Debdeep Pati, and David Dunson.
\newblock Anisotropic function estimation using multi-bandwidth gaussian
  processes.
\newblock {\em Annals of statistics}, 42(1):352, 2014.

\bibitem{bhattacharya2016bayesian}
Anirban Bhattacharya, Debdeep Pati, and Yun Yang.
\newblock Bayesian fractional posteriors.
\newblock {\em arXiv preprint arXiv:1611.01125}, 2016.

\bibitem{Bousquet:2002}
Olivier Bousquet and Andr{\'e} Elisseeff.
\newblock Stability and generalization.
\newblock {\em J. Mach. Learn. Res.}, 2:499--526, 2002.

\bibitem{castillo2015bayesian}
Isma{\"e}l Castillo, Johannes Schmidt-Hieber, Aad Van~der Vaart, et~al.
\newblock Bayesian linear regression with sparse priors.
\newblock {\em The Annals of Statistics}, 43(5):1986--2018, 2015.

\bibitem{comminges2012}
L.~Comminges and A.~S. Dalalyan.
\newblock Tight conditions for consistency of variable selection in the context
  of high dimensionality.
\newblock {\em Ann. Statist.}, 40:2667--2696, 2012.

\bibitem{gaunt2014inequalities}
Robert~E Gaunt.
\newblock Inequalities for modified bessel functions and their integrals.
\newblock {\em Journal of Mathematical Analysis and Applications},
  420(1):373--386, 2014.

\bibitem{ghosal2000}
Subhashis Ghosal, Jayanta~K. Ghosh, and Aad~W. van~der Vaart.
\newblock Convergence rates of posterior distributions.
\newblock {\em Ann. Statist.}, 28(2):500--531, 2000.

\bibitem{ghosal2008}
Subhashis Ghosal, Jüri Lember, and Aad van~der Vaart.
\newblock Nonparametric bayesian model selection and averaging.
\newblock {\em Electron. J. Statist.}, 2:63--89, 2008.

\bibitem{ghosal2007}
Subhashis Ghosal and Aad van~der Vaart.
\newblock Convergence rates of posterior distributions for noniid observations.
\newblock {\em Ann. Statist.}, 35(1):192--223, 2007.

\bibitem{ho2016strong}
Nhat Ho, XuanLong Nguyen, et~al.
\newblock On strong identifiability and convergence rates of parameter
  estimation in finite mixtures.
\newblock {\em Electronic Journal of Statistics}, 10(1):271--307, 2016.

\bibitem{johnson2012nonlocal}
V.E. Johnson and D.~Rossell.
\newblock On the use of non-local prior densities in {B}ayesian hypothesis
  tests.
\newblock {\em Journal of the Royal Statistical Society: Series B (Statistical
  Methodology)}, 72(2):143--170, 2010.

\bibitem{koltchinskii2006}
Vladimir Koltchinskii.
\newblock Local {R}ademacher complexities and oracle inequalities in risk
  minimization.
\newblock {\em Ann. Statist.}, 34(6):2593--2656, 2006.

\bibitem{kuelbs1993metric}
James Kuelbs and Wenbo~V Li.
\newblock Metric entropy and the small ball problem for gaussian measures.
\newblock {\em Journal of Functional Analysis}, 116(1):133--157, 1993.

\bibitem{lecam1973}
L.~LeCam.
\newblock Convergence of estimates under dimensionality restrictions.
\newblock {\em Ann. Statist.}, 1:38--53, 1973.

\bibitem{Lever:2013}
Guy Lever, Fran\c{c}ois Laviolette, and John Shawe-Taylor.
\newblock Tighter {PAC-B}ayes bounds through distribution-dependent priors.
\newblock {\em Theor. Comput. Sci.}, 473:4--28, 2013.

\bibitem{li2001gaussian}
Wenbo~V Li and Q-M Shao.
\newblock Gaussian processes: inequalities, small ball probabilities and
  applications.
\newblock {\em Handbook of Statistics}, 19:533--597, 2001.

\bibitem{martin2016optimal}
Ryan Martin and Stephen~G Walker.
\newblock Optimal bayesian posterior concentration rates with empirical priors.
\newblock {\em arXiv preprint arXiv:1604.05734}, 2016.

\bibitem{McAllester1998}
D.~McAllester.
\newblock Some {PAC}-b{a}yesian theorems.
\newblock In {\em Anual Conference on Computational Learning Theory}, pages
  230--234, 1998.

\bibitem{McAllester1999}
D.~McAllester.
\newblock {PAC}-{B}ayesian model averaging.
\newblock In {\em Anual Conference on Computational Learning Theory}, pages
  164--170, 1999.

\bibitem{Narisetty2014}
N.~Narisetty and X.~He.
\newblock Bayesian variable selection with shrinking and diffusing priors.
\newblock {\em Annals of Statistics}, 42:789--817, 2014.

\bibitem{norets2016}
Andriy Norets and Debdeep Pati.
\newblock Adaptive bayesian estimation of conditional densities.
\newblock {\em Econometric Theory}, pages 1--33, 007 2016.

\bibitem{Oneto2016}
Luca Oneto, Alessandro Ghio, Sandro Ridella, and Davide Anguita.
\newblock Global rademacher complexity bounds: From slow to fast convergence
  rates.
\newblock {\em Neural Processing Letters}, 43:567--602, 2016.

\bibitem{Raskutti2012}
G.~Raskutti, M.~Wainwright, and B.~Yu.
\newblock Minimax-optimal rates for sparse additive models over kernel classes
  via convex programming.
\newblock {\em Journal of Machine Learning Research}, 13:389--427, 2012.

\bibitem{savitsky2011variable}
T.~Savitsky, M.~Vannucci, and N.~Sha.
\newblock Variable selection for nonparametric gaussian process priors: Models
  and computational strategies.
\newblock {\em Statistical science: a review journal of the Institute of
  Mathematical Statistics}, 26(1):130, 2011.

\bibitem{Shang2011}
Z.~Shang and M.~Clayton.
\newblock Consistency of {B}ayesian linear model selection with a growing
  number of parameters.
\newblock {\em Journal of Statistical Planning and Inference}, 141:3463--3474,
  2011.

\bibitem{shen2013adaptive}
Weining Shen, Surya~T Tokdar, and Subhashis Ghosal.
\newblock Adaptive bayesian multivariate density estimation with dirichlet
  mixtures.
\newblock {\em Biometrika}, 100(3):623--640, 2013.

\bibitem{Tibshirani1996}
R.~Tibshirani.
\newblock Regression shrinkage and selection via the lasso.
\newblock {\em Journal of the Royal Statistical Society (Series B)},
  58:267--288, 1996.

\bibitem{vdV00}
A.~W. van~der Vaart.
\newblock {\em Asymptotic statistics}.
\newblock Cambridge Series in Statistical and Probabilistic Mathematics.
  Cambridge University Press, 1998.

\bibitem{vandervaart2008}
A.~W. van~der Vaart and J.~H. van Zanten.
\newblock {\em Reproducing kernel Hilbert spaces of Gaussian priors}, volume
  Volume 3 of {\em Collections}, pages 200--222.
\newblock Institute of Mathematical Statistics, Beachwood, Ohio, USA, 2008.

\bibitem{vandervaart2009}
A.~W. van~der Vaart and J.~H. van Zanten.
\newblock Adaptive bayesian estimation using a gaussian random field with
  inverse gamma bandwidth.
\newblock {\em Ann. Statist.}, 37:2655--2675, 2009.

\bibitem{van2014renyi}
Tim Van~Erven and Peter Harremos.
\newblock R{\'e}nyi divergence and kullback-leibler divergence.
\newblock {\em IEEE Transactions on Information Theory}, 60(7):3797--3820,
  2014.

\bibitem{Vapnik1998}
Vladimir~N. Vapnik.
\newblock {\em Statistical learning theory}.
\newblock Wiley-Interscience, 1998.

\bibitem{Wainwright2009}
M.~Wainwright.
\newblock Information-theoretic limits on sparsity recovery in the
  high-dimensional and noisy setting.
\newblock {\em IEEE Transactions on Information Theory,}, 55:5728--5741, 2009.

\bibitem{walker2001bayesian}
Stephen Walker and Nils~Lid Hjort.
\newblock On bayesian consistency.
\newblock {\em Journal of the Royal Statistical Society: Series B (Statistical
  Methodology)}, 63(4):811--821, 2001.

\bibitem{wegkamp2003}
Marten Wegkamp.
\newblock Model selection in nonparametric regression.
\newblock {\em Ann. Statist.}, 31:252--273, 2003.

\bibitem{Yang2015}
M.~Jordan Y.~Yang, M.~Wainwright.
\newblock On the computational complexity of high-dimensional {B}ayesian
  variable selection.
\newblock {\em arXiv preprint: 1505.07925}, 2015.

\bibitem{yang2015b}
Yun Yang and Surya~T. Tokdar.
\newblock Minimax-optimal nonparametric regression in high dimensions.
\newblock {\em Ann. Statist.}, 43:652--674, 2015.

\bibitem{zou2010nonparametric}
F.~Zou, H.~Huang, S.~Lee, and I.~Hoeschele.
\newblock Nonparametric bayesian variable selection with applications to
  multiple quantitative trait loci mapping with epistasis and gene--environment
  interaction.
\newblock {\em Genetics}, 186(1):385--394, 2010.

\end{thebibliography}
\end{document}